\newtheorem{prop}{Proposition}[section]
\newtheorem{lemma}[prop]{Lemma}
\newtheorem{thm}[prop]{Theorem}
\newtheorem{cor}[prop]{Corollary}
\theoremstyle{definition}
\newtheorem{defn}[prop]{Definition}
\newtheorem{rmk}[prop]{Remark}
\newtheorem{ex}[prop]{Example}
\DeclareMathOperator{\pic}{Pic}
\DeclareMathOperator{\Cox}{Cox}
\DeclareMathOperator{\spec}{Spec}
\DeclareMathOperator{\sym}{Sym}
\DeclareMathOperator{\Lie}{Lie}
\DeclareMathOperator{\sing}{Sing}
\newcommand{\ra}{\rightarrow}
\def\cO{\mathcal O}
\def\AA{\mathbb A}\def\CC{\mathbb C}
\def\GG{\mathbb G}\def\HH{\mathbb H}
\def\PP{\mathbb P}
\def\RR{\mathbb R}
\def\fg{\mathfrak g}
\def\fk{\mathfrak k}\def\fl{\mathfrak l}
\def\fs{\mathfrak s}
\def\SU{\mathrm{SU}} \def\GL{\mathrm{GL}} \def\SL{\mathrm{SL}}
\title{Algebraic symplectic analogues of additive quotients}
\author{Brent Doran and Victoria Hoskins}
\begin{document}
\maketitle

\begin{abstract}
Motivated by the study of hyperk\"{a}hler structures in moduli problems and hyperk\"{a}hler implosion, we initiate the study of non-reductive hyperk\"{a}hler and algebraic symplectic quotients with an eye towards those naturally tied to projective geometry, like cotangent bundles of blow-ups of linear arrangements of projective space. In the absence of a Kempf-Ness theorem for non-reductive quotients, we focus on constructing algebraic symplectic analogues of additive quotients of affine spaces, and obtain hyperk\"{a}hler structures on large open subsets of these analogues by comparison with reductive analogues. We show that the additive analogue naturally arises as the central fibre in a one-parameter family of isotrivial but non-symplectomorphic varieties coming from the variation of the level set of the moment map.  Interesting phenomena only possible in the non-reductive theory, like non-finite generation of rings, already arise in easy examples, but do not substantially complicate the geometry.
\end{abstract}

\section{Introduction}

Some of the most fascinating moduli spaces arise as hyperk\"{a}hler or algebraic symplectic analogues of simpler moduli spaces. The classical example is the moduli space of Higgs bundles over a curve, which contains the cotangent bundle to the moduli space of vector bundles as a dense open subset. The theory of Higgs bundles and the Hitchin fibration is extraordinarily fruitful in several areas of mathematics  \cite{hausel, hitchin}. Another example is Nakajima quiver varieties \cite{nakajima}, which are algebraic symplectic analogues of moduli spaces of quiver representations, and play an important role in representation theory. Each of these spaces, along with their analogues, arises via a quotient procedure from a group action on an affine space.  These examples suggests that hyperk\"{a}hler analogues of other moduli problems, particularly those arising as quotients of affine spaces, might bear similar fruit.  One could dream that such a picture may emerge in the other archetypical moduli problem, the moduli of curves.  Independently, especially given work on hyperk\"{a}hler analogues of toric varieties, it is worth asking how much of the basic structure of hyperk\"{a}hler quotients can be extended to nice enough projective varieties and their cotangent bundles.  In fact, it has recently been shown that a wide range of projective varieties, including blow-ups of projective space along projective linear arrangements, can be described as a quotient of affine spaces by a solvable group action \cite{brentnoah}.  The moduli space of stable pointed rational curves $\overline{M}_{0,n}$ admits such a description; so one can begin to explore the dream.  

On the one hand, hyperk\"{a}hler quotients by torus actions have been extensively studied for many years.  On the other, hyperk\"{a}hler implosion, recently developed in a series of papers by Dancer, Kirwan and their co-authors, can be thought of as addressing very special cases of unipotent actions via the study of a universal imploded cross-section \cite{DKR,DKS,DKS2}.  Our motivation is to broaden the class of unipotent actions amenable to a hyperk\"{a}hler quotient-like construction, while staying tied to smooth projective geometry, by virtue of using the full solvable group action.  In effect, the unipotent actions to consider are graded in the sense of \cite{BK}, where the torus action arises via the universal torsor of the projective variety as in \cite{brentnoah}.  One can then elide the technical difficulties of the stratified singularities that arise in implosion, because these are contained in unstable loci for the torus action, for a suitable ample character.  The present paper initiates this general study by focusing on the base case of linear actions of the additive group which, when suitably modified to non-linear additive actions and put in a family together with a N\'{e}ron-Severi torus action, suffice to realise $\overline{M}_{0,n}$ as a quotient of affine space.  

The theory of algebraic symplectic and hyperk\"{a}hler analogues for reductive quotients of affine spaces is reasonably well understood following work of Proudfoot \cite{proudfootphd}, which unifies the construction of hypertoric varieties and Nakajima quiver varieties. The construction of these analogues is given by an algebraic symplectic or hyperk\"{a}hler reduction of the cotangent lift of the action, where one takes a quotient of a level set of a moment map on the cotangent bundle. Over the complex numbers, the algebraic symplectic and hyperk\"{a}hler analogues of a reductive quotient of an affine space are homeomorphic by the Kempf--Ness Theorem \cite{kempf_ness}, which relies on the fact that a complex reductive group is the complexification of its maximal compact subgroup. More precisely, the algebraic symplectic analogue is constructed by taking a geometric invariant theory (GIT) quotient of the zero level set of an algebraic moment map by the reductive group and the hyperk\"{a}hler analogue is a topological quotient by the maximal compact group of the intersection of the algebraic moment map zero level set with a smooth moment map level set.

For non-reductive groups, one does not have a Kempf--Ness Theorem relating GIT quotients and symplectic reductions and, although one may expect such a result, it is not clear what shape it should take, as the maximal compact subgroup of the additive group is trivial. Until this hope is realised, we can only focus on the algebraic symplectic side; however, we will see below that the additive algebraic symplectic analogue inherits a hyperk\"{a}hler structure via a comparison with a reductive algebraic symplectic analogue.

In the simplest cases the additive theory provides new phenomena. We show how to produce examples where the ring of invariant functions on the zero level set of the algebraic moment map is non-finitely generated, and yet still work with the spaces in practice without difficulty.  Strikingly, as we show in Theorems \ref{thm fg nonzero} and \ref{1-param family red}, the additive theory, in contrast to the reductive theory, gives a natural one-parameter family $\mathcal{X} \ra \CC^*$ constructed as smooth algebraic symplectic reductions at non-zero values in $\CC^*$, which is isotrivial as a family of varieties, but not as a family of algebraic symplectic varieties. The central fibre of this family is singular and is the extrinsic quotient of the zero level set obtained by using invariant functions on the cotangent bundle (although, there exist invariant functions on the zero level set which do not extend to global invariant functions on the cotangent bundle). The algebraic symplectic analogue should be a quotient of the zero level set of the algebraic moment map, which is singular. However, we avoid these singularities by presenting an algebraic symplectic reduction which is a quotient of a dense open subset of the zero level set, determined by stability analysis; in Proposition  \ref{nonred alg symp analog}, we show that this algebraic symplectic reduction is also an open subset of the central fibre of our 1-parameter family and provides an algebraic symplectic analogue of a dense open subset of the additive quotient of the affine space. Conveniently, this also circumvents the need for the ring of invariant functions on this zero level set to be finitely generated.

A fruitful idea of non-reductive geometric invariant theory is to embed a non-reductive group in a reductive group and use techniques of reductive GIT \cite{dorankirwan}. We can embed $\GG_a$ in $\SL_2$ as the upper triangular unipotent subgroup. Over the complex numbers, any linear action of the additive group extends to $\SL_2$ and the ring of $\GG_a$-invariants on the affine space $V$ is naturally isomorphic to the ring of $\SL_2$-invariants on $W:=V \times \AA^2$ and so, in particular, is finitely generated (this is Weitzenb\"{o}ck's Theorem \cite{weitzenbock}). Furthermore, we have an isomorphism
\[  V/\!/\GG_a:= \spec \CC[V]^{\GG_a}  \cong \spec \CC[W]^{\SL_2}=: W/\!/ \SL_2. \]
Theorem \ref{thm biratl sympl} gives a direct comparison between the algebraic symplectic structures on the additive algebraic symplectic analogue of $V/\!/\GG_a$ and the reductive algebraic symplectic analogue of $W/\!/\SL_2$, realised as a birational symplectomorphism differing in codimension at least two.  
Then, via the Kempf-Ness Theorem for $\SL_2$, we establish a hyperk\"{a}hler structure on a dense open subset of the additive algebraic symplectic analogue, in a way similar to the implosion picture. 
We will discuss wider classes of unipotent actions, and the relation with torus actions, as well as details of the hyperk\"{a}hler metric, in follow-up work.  

The outline of this paper is as follows. Section 2 is a summary of the reductive quotient theory, detailing algebraic moment maps for cotangent lifts of the action, symplectic and hyperk\"{a}hler reduction and the results of Proudfoot on hyperk\"{a}hler analogues.  Section 3 addresses the additive case, describing the moment map and the geometry and stability properties of the level sets.  We study the non-zero level set case and the zero level set case separately.  The main results are Theorems \ref{thm fg nonzero} and \ref{1-param family red} describing the algebraic symplectic reductions for non-zero values in a one-parameter family, followed by Proposition \ref{nonred alg symp analog} presenting an algebraic symplectic reduction of a dense open subset of the zero level set, determined by stability analysis.  Section 4 explicitly compares the additive analogue to the familiar $\SL_2$-analogue, summarised in Theorem \ref{thm biratl sympl}, and as a corollary of this result, we obtain a hyperk\"{a}hler structure on a dense open subset of the additive analogue.  In Section 5, we show how the whole story relates to projective geometry, by looking at the cotangent bundle to the blow-up of $n$ points on $\mathbb{P}^{n-2}$.  Section 6 is devoted to examples of computations of rings of additive invariants on the zero level set of the moment map.    

\subsection*{Acknowledgements}  We thank Tamas Hausel, and independently Nicholas Proudfoot, for suggesting the study of $\overline{M}_{0,n}$, and for subsequent conversations.  We are grateful to Andrew Dancer and also to Frances Kirwan for helpful discussions, especially in regard to hyperk\"{a}hler implosion.  The first author was partially supported by Swiss National Science Foundation Award $200021{\_}138071$.  The second author was supported by the Freie Universit\"{a}t Berlin within the Excellence Initiative of the German Research Foundation.

\section{Algebraic symplectic analogues of reductive quotients}

Let $G$ be a reductive group acting linearly on affine space $V \cong \AA^N$; i.e., via a representation $\rho : G \ra \GL(V)$. We can 
construct a GIT quotient of the $G$-action on $V$ by linearising the action on the 
structure sheaf of $V$ using a character $\chi : G \ra \GG_m$; the corresponding GIT 
quotient is a morphism of algebraic varieties
\[ V^{\chi-ss} \ra V/\!/_\chi G\]
where $V^{\chi-ss}$ is the open set of semistable points in $V$ for the $G$-action 
linearised by $\chi$ in the sense of Mumford \cite{mumford}. The GIT quotient 
$V/\!/_\chi G$ is the projective spectrum of the ring of $\chi$-semi-invariants on $V$ 
and is projective over the affine 
GIT quotient $V/\!/G:=\spec \CC[V]^G$.

In this section, we study the lift of the $G$-action to the cotangent bundle $T^*V$, which is 
algebraic symplectic with respect to the Liouville form on $T^*V$ and has an algebraic 
moment map $\mu : T^*V \ra \fg^*$. An algebraic symplectic reduction of the $G$-action on 
$T^*V$ is given by taking a GIT quotient of a level of this moment map at a central value of the co-Lie algebra. 
Finally, we give a survey of Proudfoot's construction of an algebraic symplectic analogue of $V/\!/_\chi G$, 
which, over the complex numbers, is homeomorphic to a hyperk\"{a}hler analogue \cite{proudfootphd}.

\subsection{Algebraic moment maps for cotangent lifts of linear actions}

We recall that an algebraic symplectic manifold $(M,\omega)$ over $\CC$ is a smooth 
complex algebraic variety with an algebraic symplectic form $\omega$ (that is, $\omega$ 
is a non-degenerate closed algebraic 2-form on $M$). This definition also makes sense over a 
more general field $k$. 
The classical example of an algebraic symplectic 
manifold is the cotangent bundle $T^*X$ of a smooth algebraic 
variety $X$ with symplectic form given by the Liouville 2-form. 
For an affine space $V \cong \AA^n$, if we identify $T^*V \cong V \times V^*$ and take 
coordinates $x_1, \dots , x_n$ on $V$ and $\alpha_1,\dots ,\alpha_n$ on $V^*$, then 
the Liouville form on $T^*V$ is given by
\[ \omega = \sum_{l=1}^n dx_l \wedge d \alpha_l.\]

Let $G$ be an affine algebraic group which acts linearly on 
an affine space $V \cong \AA^n$ via a representation $\rho : G \ra \GL(V)$. 
The cotangent lift of the $G$-action to $T^*V \cong V \times V^*$ is given by
\[ g \cdot (v,\gamma) = (\rho(g)v,\gamma \rho(g)^{-1})\]
where $g \in G$ and we consider $v \in V$ as a column vector and $\gamma \in V^*$ as a 
row vector. The following lemma is well-known, but as we were unable to find a suitable 
reference, we include a proof for the sake of completeness.

\begin{lemma}\label{alg mmap def}
Let $G$ be an algebraic group and $\rho: G \ra \GL(V)$ be a $G$-representation. Let $\xi \in \fg^*$ be a point which is fixed by the coadjoint action. Then the following statements hold.
\begin{enumerate}[(i)]
\item The cotangent lift of the $G$-action on $T^*V$ is algebraic symplectic for the 
Liouville form $\omega$ on $T^*V$.
\item This action has an algebraic 
moment map $\mu : T^*V \ra \fg^*$ 
(that is, a $G$-equivariant algebraic map that lifts the infinitesimal action) 
given by
\[ \mu(v,\gamma) \cdot A := \gamma(A_v) = \gamma(\rho_*(A)v)\]
for $A \in \fg^* = (\Lie G)^*$ and $(v,\gamma) \in T^*V \cong V \times V^*$.
\item A point $p \in \mu^{-1}(\xi)$ is a smooth point of the level set $\mu^{-1}(\xi)$ if and only if the $G$-action on $p$ is locally trivial (that is, the stabiliser group $G_p$ is finite).
\item If $p \in \mu^{-1}(\xi)$ is a smooth point, then there is a canonical non-degenerate bilinear form on the vector space
\[ T_p(\mu^{-1}(\xi))/\fg\]
induced by the non-degenerate bilinear form $\omega_p$ on $T_p(T^*V)$.
\end{enumerate} 
\end{lemma}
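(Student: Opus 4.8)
The plan is to verify statements (i)--(iv) in sequence, since each relies on the previous. For (i), I would compute the pullback of $\omega = \sum dx_l \wedge d\alpha_l$ under the linear symplectomorphism $(v,\gamma) \mapsto (\rho(g)v, \gamma\rho(g)^{-1})$; since $\rho(g)$ acts on $V$ and its inverse transpose on $V^*$, the form is manifestly preserved, and it remains closed since it is constant-coefficient. For (ii), I would first write down the infinitesimal action of $A \in \fg$ on $T^*V$: it is the Hamiltonian vector field whose components are $(\rho_*(A)v, -\gamma\rho_*(A))$. Then I would check that the function $H_A(v,\gamma) := \gamma(\rho_*(A)v)$ has $dH_A = \iota_{X_A}\omega$ by a direct computation in coordinates, which identifies $\mu$ as claimed; $G$-equivariance of $\mu$ follows from the fact that $H_{\mathrm{Ad}(g)A} = H_A \circ g^{-1}$, using the chosen $\xi$ being coadjoint-fixed only where needed to make $\mu^{-1}(\xi)$ a $G$-invariant subvariety.

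For (iii), the key point is the standard moment-map transversality criterion: $\xi$ (hence any value) is a regular value of $\mu$ at $p$ if and only if $d\mu_p$ is surjective, and by the usual duality between $d\mu_p$ and the infinitesimal action map $\fg \to T_p(T^*V)$, $A \mapsto X_A(p)$, surjectivity of $d\mu_p$ is equivalent to injectivity of this infinitesimal action map, i.e. to the stabiliser $\fg_p$ being trivial; since $G_p$ is an algebraic group this is equivalent to $G_p$ being finite. I would make this precise by writing $\langle d\mu_p(w), A\rangle = \omega_p(w, X_A(p))$ and using non-degeneracy of $\omega_p$. For (iv), on a smooth point $p$ the tangent space $T_p(\mu^{-1}(\xi)) = \ker d\mu_p$ is the $\omega_p$-orthogonal complement of the image $\fg \cdot p$ of the infinitesimal action; since $\fg \cdot p \subseteq \ker d\mu_p$ (the moment map is constant along orbits, as $\xi$ is coadjoint-fixed), the restriction of $\omega_p$ to $\ker d\mu_p$ descends to a bilinear form on the quotient $T_p(\mu^{-1}(\xi))/\fg$, and this form is non-degenerate precisely because we have quotiented by the full radical $(\ker d\mu_p)^{\perp_\omega} \cap \ker d\mu_p = \fg\cdot p$.

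The main obstacle is purely organizational rather than deep: keeping the sign conventions and the row-vector/column-vector bookkeeping consistent throughout the coordinate computation in (ii), and being careful that in (iv) the natural map $\fg \to T_p(\mu^{-1}(\xi))$ has image exactly $\fg \cdot p \cong \fg/\fg_p$, which is identified with $\fg$ itself only because $\fg_p = 0$ at a smooth point by (iii) — so (iv) genuinely depends on (iii). I expect no serious difficulty, only the need to state the linear-symplectic-algebra lemma (a coisotropic subspace containing its own orthogonal complement yields a symplectic quotient) cleanly.
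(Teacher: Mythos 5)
Your plan is correct and follows essentially the same route as the paper's proof: a direct coordinate check that the linear cotangent lift preserves the constant-coefficient Liouville form, verification of equivariance and the infinitesimal lifting identity $\omega_p(A_p,\cdot)=d_p\mu(\cdot)\cdot A$, the surjectivity-of-$d_p\mu$ versus triviality-of-$\fg_p$ duality for (iii), and the standard coisotropic-reduction argument identifying $\ker d_p\mu$ with $(\fg\cdot p)^{\omega_p}$ for (iv). No gaps; only the coordinate computations remain to be written out.
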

\begin{proof}
For (i), we need to show that $\sigma_g^* \omega = \omega$, where $\sigma_g : T^*V \ra T^*V$ 
denotes the action of $g \in G$. 
For $p=(v,\gamma) \in T^*V$, we identify $T_p(T^*V) \cong T^*V \cong V \times V^*$ and write 
tangent vectors as pairs $p'=(v',\gamma')$. Then
\begin{align*}
(\sigma^*_g\omega)_p(p',p'') & := \omega_{g \cdot p} (d_p\sigma_g(p'), d_p\sigma_g(p''))  
= \gamma''( \rho(g)^{-1}\rho(g)v') - \gamma' (\rho(g)^{-1}\rho(g)v'') = \omega_p(p',p'').
\end{align*}
For (ii), we first check the equivariance of $\mu$: for $g \in G$, $A \in \fg$ and 
$(v,\gamma) \in T^*V $, 
we have
\begin{align*}
\mu( g \cdot (v,\gamma)) \cdot A & = \mu( (\rho(g)v,\gamma \rho(g)^{-1}) ) \cdot A 
= \gamma \rho(g)^{-1}\rho_*(A)\rho(g)v \\
&= \gamma \text{ad}_{\rho(g)^{-1}}(\rho_*(A)) v = \text{ad}_g^*(\mu(v,\gamma)) \cdot A.
\end{align*}
It remains to check that the moment map lifts the infinitesimal action. 
The infinitesimal action of $A \in \fg$ on $p=(v,\gamma) \in T^*V$ is 
$A_p :=(\rho_*(A)v,\gamma \rho_*(-A)) \in T_p(T^*V)$. Then
\begin{align}\label{mmap inf}
\omega_p(A_p,p') = \gamma'\rho_*(A)v - \gamma \rho_*(-A)v' 
= \gamma'\rho_*(A)v + \gamma \rho_*(A)v'= d_p\mu(p') \cdot A
\end{align}
as required.

For (iii), $p$ is a smooth point of $\mu^{-1}(\xi)$ if and only if $d_p\mu$ 
is surjective. By the infinitesimal lifting property (\ref{mmap inf}) of the 
moment map, we see that $p$ is smooth if and only if 
$\fg_p = 0$ (or, equivalently, if and only if $G_p$ is finite).

Statement (iv) follow from an algebraic version of the Marsden--Weinstein--Meyer Theorem \cite{mw,meyer}. 
Let us briefly sketch the proof. By the preimage theorem, we have $T_p\mu^{-1}(\xi) \subset \text{ker} d_p\mu$. 
Moreover, 
the vector spaces $\text{ker} d_p \mu$ and $T_p(G \cdot p)$ are orthogonal 
complements with respect to the bilinear form $\omega_p$, as
\[ \zeta \in \text{ker} d_p \mu \iff dp\mu(\zeta) \cdot A = 0 \: \forall A \in \fg 
\iff \omega_p(A_p,\zeta) =0 \: \forall A \in \fg \iff \zeta \in T_p(G \cdot p)^{\omega_p}.\]
Therefore, 
$T_p(G \cdot p) \subset T_p(\mu^{-1}(\xi)) = \text{ker} d_p \mu = T_p(G \cdot p)^{\omega_p}$; 
that is $T_p(G \cdot p)$ is an isotropic subspace of $(T_p(T^*V),\omega_p)$ and there is an induced 
non-degenerate bilinear form $\omega'_p$ on 
\[T_p(G \cdot p)^{\omega_p}/T_p(G \cdot p) \cong T_p(\mu^{-1}(\xi))/\fg\] 
defined by $\omega'_p([\zeta_1],[\zeta_2]):= \omega_p(\zeta_1,\zeta_2)$, where the above isomorphism 
uses the infinitesimal action $\fg \ra T_p(T^*V)$, whose image is $T_p(G \cdot p)$ and whose 
kernel $\fg_p$ is zero, as $p$ is a 
smooth point). 
\end{proof}

The level sets of an algebraic moment map $\mu : T^*V \ra \fg^*$ are closed 
algebraic subvarieties of $T^*V$, and so are affine algebraic varieties, 
which are smooth if the level set is taken at a regular value of $\mu$ 
(that is, if and only if $G$ acts locally freely on this level set). Moreover, 
for central elements $\xi \in \fg^*$, the level sets $\mu^{-1}(\xi)$ 
are invariant under the linear $G$-action. 

\subsection{Algebraic symplectic reductions for reductive groups}

For the moment, we restrict ourselves to reductive groups $G$ in order to take a reductive GIT 
quotient of the $G$-action on the level sets of the moment map for the cotangent lift 
of this action. In the following section, we will consider non-reductive groups.

\begin{defn}
Let $G$ be a reductive group acting linearly on $V$ and algebraically symplectically 
on $(T^*V,\omega)$ with algebraic moment map $\mu : T^*V \ra \fg^*$ as above. 
Let $\chi$ be a character of $G$ and $\xi \in \fg^*$ be a central element; 
then we refer to $(\chi,\xi)$ as a central pair. 
The algebraic symplectic reduction of the $G$-action on $(T^*V,\omega)$ at a 
central pair $(\chi,\xi)$ is the GIT quotient of the linear $G$-action on 
the affine algebraic variety $\mu^{-1}(\xi)$ with respect to the 
character defined by $\chi$, we denote this as follows
\[ \pi=\pi_{(\chi,\xi)}: \mu^{-1}(\xi)^{\chi-ss} \ra 
\mu^{-1}(\xi)/\!/_{\chi} G.\]
\end{defn}

\begin{rmk}
Over the complex numbers, for any $\xi \in \fg^*$, one can construct a 
holomorphic symplectic reduction of 
the $G$-action on $T^*V$ at $\xi$ as the topological quotient $\mu^{-1}(\xi)/G$. 
If $\xi $ is a regular value of $\mu$ and the $G$-action on 
$\mu^{-1}(\xi)$ is free and proper, 
then $\mu^{-1}(\xi)/G$ is a holomorphic symplectic 
manifold by a holomorphic version of the Marsden--Weinstein--Meyer Theorem. In 
a more general setting, one can defined a stratified holomorphic symplectic 
structure along the lines of \cite{lermansjamaar}. 
In this case, the holomorphic reduction and the
algebraic symplectic reduction at the trivial character are related by the natural map
\[ \mu^{-1}(\xi)/G \ra \mu^{-1}(\xi)/\!/G\]
which is surjective, but not necessarily injective. The algebraic and holomorphic 
reductions coincide if all points of $\mu^{-1}(\xi)$ are stable.
\end{rmk}

We now state an algebraic version of the Marsden--Weinstein--Meyer Theorem.

\begin{prop}\label{alg mww}
Let $G$ be a reductive group acting linearly on an affine space $V$ 
and consider the algebraic symplectic reduction $\mu^{-1}(\xi)/\!/_{\chi} G$ 
of the $G$-action on $T^*V$ at a central pair $(\chi,\xi)$. 
If $G$ acts (set theoretically) freely on $\mu^{-1}(\xi)^{\chi-ss}$, 
then $\mu^{-1}(\xi)/\!/_{\chi} G$ has a unique structure of a algebraic symplectic 
manifold compatible with that of $T^*V$.
\end{prop}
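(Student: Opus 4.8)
The plan is to descend the algebraic symplectic structure from $T^*V$ to the quotient $\mu^{-1}(\xi)/\!/_\chi G$ by checking that it is well-defined pointwise and then globally algebraic, using the pointwise computation already done in Lemma~\ref{alg mmap def}(iv). First I would note that, since $G$ acts freely on $\mu^{-1}(\xi)^{\chi-ss}$, every semistable point $p$ is a regular point of $\mu$ by Lemma~\ref{alg mmap def}(iii), so $\mu^{-1}(\xi)^{\chi-ss}$ is a smooth variety on which $G$ acts freely. Because $G$ is reductive and acts freely on the semistable locus with a geometric quotient, the map $\pi : \mu^{-1}(\xi)^{\chi-ss} \ra \mu^{-1}(\xi)/\!/_\chi G$ is a $G$-torsor (in particular a smooth morphism), so the quotient $Y := \mu^{-1}(\xi)/\!/_\chi G$ is itself smooth and $d_p\pi$ induces an isomorphism $T_p(\mu^{-1}(\xi)^{\chi-ss})/T_p(G\cdot p) \xrightarrow{\ \sim\ } T_{\pi(p)}Y$ for each $p$.

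Next I would transport the form: for $p$ semistable, Lemma~\ref{alg mmap def}(iv) gives a canonical non-degenerate bilinear form $\omega_p'$ on $T_p(\mu^{-1}(\xi))/\fg \cong T_{\pi(p)}Y$, so define $\omega_Y$ at $\pi(p)$ to be $\omega_p'$ under this identification. One must check this is independent of the choice of $p$ in the fibre: this is immediate from $G$-invariance of $\omega$ on $T^*V$, since $\sigma_g^*\omega = \omega$ (Lemma~\ref{alg mmap def}(i)) intertwines the identifications at $p$ and at $g\cdot p$. Thus $\omega_Y$ is a well-defined non-degenerate algebraic $2$-form on $Y$; the algebraicity follows because locally on $Y$ the torsor $\pi$ admits sections (or one may pull back: $\pi^*\omega_Y$ agrees with the restriction of $\omega$ to $\mu^{-1}(\xi)^{\chi-ss}$ in the directions transverse to the orbit, and descent of a section along a faithfully flat map controls algebraicity).

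It then remains to verify $d\omega_Y = 0$. Here I would argue that $\pi^*\omega_Y = i^*\omega$, where $i : \mu^{-1}(\xi)^{\chi-ss} \hookrightarrow T^*V$ is the inclusion: indeed both restrict to the same bilinear form on each $T_p/T_p(G\cdot p)$, and both are $G$-basic (horizontal: they kill $\fg$-directions since $T_p(G\cdot p)$ is isotropic and lies in $\ker d_p\mu$; invariant: by $G$-invariance of $\omega$ and of $\mu^{-1}(\xi)$), so they coincide as forms on the total space. Since $i^*\omega$ is closed (the Liouville form $\omega$ is closed and pullback commutes with $d$), we get $\pi^*(d\omega_Y) = d(\pi^*\omega_Y) = d(i^*\omega) = 0$, and because $\pi$ is a smooth surjection the pullback $\pi^*$ on differential forms is injective, forcing $d\omega_Y = 0$. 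Finally, uniqueness and compatibility: any algebraic symplectic form on $Y$ whose pullback to $\mu^{-1}(\xi)^{\chi-ss}$ agrees with $i^*\omega$ must equal $\omega_Y$ by injectivity of $\pi^*$, which is the precise sense of ``compatible with that of $T^*V$''.

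**The main obstacle** I anticipate is the algebraicity and the descent step rather than the linear algebra: one must be careful that the fibrewise-defined form $\omega_Y$ is genuinely a regular $2$-form, which requires knowing $\pi$ is not merely a geometric quotient but a (Zariski-, or at worst étale-) locally trivial $G$-torsor so that $\omega_Y$ can be exhibited in local coordinates, and then invoking that a form pulling back to a regular form along a faithfully flat morphism is regular. Over $\CC$ with $G$ acting freely on the semistable locus this is standard, but it is the point where the reductivity of $G$ and the freeness hypothesis are genuinely used, in contrast to the purely formal parts of the argument.
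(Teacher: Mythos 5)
Your proposal is correct and follows essentially the same route as the paper: freeness places the semistable locus inside the smooth part of the level set, the quotient map becomes a principal $G$-bundle, and the Liouville form descends via the pointwise statement of Lemma \ref{alg mmap def}(iv), with closedness and uniqueness obtained from injectivity of $\pi^*$ on forms. The one step the paper makes explicit that you pass over is why the GIT quotient of the semistable locus is a \emph{geometric} quotient in the first place: freeness forces every orbit to have dimension $\dim G$, hence to be closed in $\mu^{-1}(\xi)^{\chi-ss}$, so $\mu^{-1}(\xi)^{\chi-ss}=\mu^{-1}(\xi)^{\chi-s}$ and the $G$-bundle structure then follows from Luna's \'etale slice theorem.
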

\begin{proof}
The assumption that $G$ acts on the GIT semistable set 
$\mu^{-1}(\xi)^{\chi-ss}$ with trivial stabiliser groups, implies that 
$\mu^{-1}(\xi)^{\chi-s}= \mu^{-1}(\xi)^{\chi-ss}$, as all orbits 
have top dimension and so must be closed. Hence, the $G$-action on the 
GIT semistable is scheme theoretically free (i.e., set theoretically free 
and proper) and, moreover, the semistable set is contained 
in the smooth locus of $\mu^{-1}(\xi)$ by Lemma \ref{alg mmap def}. 
The GIT quotient 
\[ \pi: \mu^{-1}(\xi)^{\chi-ss} \ra \mu^{-1}(\xi)/\!/_{\chi} G\]
is a geometric quotient, which is smooth and, in fact, a principal 
$G$-bundle by Luna's \'{e}tale slice theorem \cite{luna}.

To prove the existence and uniqueness of the algebraic symplectic form on 
$\mu^{-1}(\xi)/\!/_{\chi} G$, we can use an algebraic version of the 
Marsden--Weinstein--Meyer Theorem: by Lemma \ref{alg mmap def} the 
Liouville form $\omega$ determines a non-degenerate two form on 
$\mu^{-1}(\xi)/\!/_{\chi} G$, which is closed, as $\omega$ is closed.  
\end{proof}

For semisimple groups, such as $\SL_n$, we only have the trivial character 
$\chi = 0$ and the trivial central value $\xi = 0$. 
For the central pair $(0,0)$, the hypothesis of this proposition are 
never satisfied, as all points are semistable and  
$0 \in \mu^{-1}(0)$ is a fixed point of the linear $G$-action. 
In cases when the hypothesis of this proposition fail, we cannot expect to have a 
stratified symplectic structure analogous to \cite{lermansjamaar}, as the GIT quotient 
can identify orbits of different dimensions and the stratification in loc. cit. is 
obtained using the conjugacy class of the stabiliser subgroups. 

Instead, we show there is an open subvariety of the 
algebraic symplectic reduction which admits the structure of an algebraic symplectic 
manifold. Since the GIT quotient restricts to a geometric quotient on the stable locus, 
one may expect to use the stable locus as this open subset. However, we cannot take 
the whole stable locus as the presence of non-trivial but finite stabiliser groups 
can cause singularities in the geometric quotient (for example, the Kleinian surface 
singularities have a GIT quotient construction given by a finite group acting linearly on 
$\AA^2$, for which all points are stable). 

\begin{lemma}\label{alg mmw open}
Let $G$ be a reductive group acting linearly on $V= \AA^n$ and $(\chi,\xi)$ 
be a central pair. Then there exists an open 
subvariety of $\mu^{-1}(\xi)/\!/_{\chi} G$ admitting a unique structure of an 
algebraic symplectic manifold compatible with $T^*V$.
\end{lemma}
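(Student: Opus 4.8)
The plan is to cut out from $\mu^{-1}(\xi)$ the largest open piece on which the reductive quotient behaves like the symplectic reductions of Proposition~\ref{alg mww}, namely the GIT-stable points with \emph{trivial} stabiliser, and then descend the Liouville form there. Write $X:=\mu^{-1}(\xi)$, which is $G$-invariant since $\xi$ is central, and set
\[ U:=\{\, p\in X^{\chi-s} : G_p=\{e\}\,\}\subseteq X^{\chi-s}. \]
By Mumford's theory the GIT-stable locus $X^{\chi-s}$ is open in $X$, the $G$-action on it is proper with finite stabilisers, and $X^{\chi-s}/G$ is a geometric quotient which is an open subvariety of $\mu^{-1}(\xi)/\!/_\chi G$; moreover, by Lemma~\ref{alg mmap def}(iii), $X^{\chi-s}$ lies in the smooth locus of $X$, so $U$ is smooth. (Note $U$ may be empty, in which case the statement is vacuous.)

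The first thing I would check is that $U$ is open in $X^{\chi-s}$, hence in $X$. Over $X^{\chi-s}$ the stabiliser group scheme $\mathcal S:=\{(g,p)\in G\times X^{\chi-s}:gp=p\}$ is closed, and the projection $f\colon\mathcal S\to X^{\chi-s}$ is the base change of the (proper) action morphism $G\times X^{\chi-s}\to X^{\chi-s}\times X^{\chi-s}$ along the diagonal, hence proper; being also quasi-finite (its fibre over $p$ is $G_p$), it is finite, so $f_*\mathcal O_{\mathcal S}$ is coherent and $p\mapsto\dim_{k(p)} (f_*\mathcal O_{\mathcal S})\otimes k(p)$ is upper semicontinuous. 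Over $\CC$ the finite group schemes $G_p$ are reduced (Cartier), so this fibre length equals $|G_p|$, and therefore $U=\{p:\text{fibre length}=1\}$ is open. (Alternatively, openness of $U$ drops out of Luna's \'etale slice theorem, which presents $X^{\chi-s}$ \'etale-locally near $p$ as $G\times_{G_p}(\text{slice})$.) Since $U$ is open, $G$-invariant and a union of fibres of the geometric quotient $\pi\colon X^{\chi-s}\to X^{\chi-s}/G$, its image $\pi(U)$ is open in $X^{\chi-s}/G$, hence open in $\mu^{-1}(\xi)/\!/_\chi G$.

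Finally I would transport the symplectic form exactly as in the proof of Proposition~\ref{alg mww}. On $U$ the $G$-action is scheme-theoretically free and proper, so $\pi\colon U\to\pi(U)$ is a principal $G$-bundle; in particular $\pi^*$ is injective on algebraic $2$-forms, which forces uniqueness of any $2$-form on $\pi(U)$ pulling back to the restriction of the Liouville form $\omega$. For existence, $\omega|_U$ is $G$-invariant (Lemma~\ref{alg mmap def}(i)) and horizontal for $\pi$: its contraction with a vertical vector $v\in T_p(G\cdot p)$ vanishes on $T_pU\subseteq T_pX=\ker d_p\mu=T_p(G\cdot p)^{\omega_p}$ by the computation in the proof of Lemma~\ref{alg mmap def}(iv). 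Hence $\omega|_U$ descends to a closed algebraic $2$-form on $\pi(U)$, and Lemma~\ref{alg mmap def}(iv) identifies the descended form at $[p]$ with the canonical non-degenerate pairing on $T_pX/\fg\cong T_{[p]}\pi(U)$, so it is symplectic and compatible with $T^*V$. The only genuinely non-formal input is Lemma~\ref{alg mmap def}(iv), which is already in hand; the step that needs care is the openness of $U$, where properness of the $G$-action on the stable locus and the characteristic-zero hypothesis enter.
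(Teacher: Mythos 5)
Your proof is correct, but it carves out a different open set from the one the paper uses. The paper's proof takes the smooth locus of the geometric quotient $\mu^{-1}(\xi)^{\chi-s}/G$ of the whole stable set and asserts that it inherits the symplectic structure ``following Lemma~\ref{alg mmap def}''; you instead restrict to the locus $U$ of stable points with \emph{trivial} stabiliser, prove $U$ is open via finiteness of the stabiliser group scheme (or Luna's slice theorem), and descend the Liouville form along the resulting principal $G$-bundle $U\to\pi(U)$. Your open set $\pi(U)$ sits inside the paper's (free orbits are smooth points of the quotient), so you may be giving up some of the smooth locus, but what you gain is a genuinely cleaner descent argument: on the free locus, $\pi^*$ is injective on $2$-forms, so uniqueness is immediate, and invariance plus horizontality of $\omega|_U$ gives existence without further fuss. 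At points of the smooth locus with non-trivial finite stabilisers --- which the paper's discussion just before the lemma explicitly acknowledges can exist --- descending the invariant form to an algebraic $2$-form on the quotient requires an extra argument (the quotient map is not \'etale there), and the paper elides this. So your route is slightly less ambitious in the open set it produces but more watertight in the step that actually matters; both establish the lemma as stated, since it only asks for the existence of \emph{some} open subvariety.
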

\begin{proof}
Let $\pi : \mu^{-1}(\xi)^{\chi-ss} \ra \mu^{-1}(\xi)/\!/_{\chi} G$ denote the GIT quotient, 
which restricts to a geometric quotient on the stable locus 
$\mu^{-1}(\xi)^{\chi-s} \ra \mu^{-1}(\xi)^{\chi-s}/G$. Then the smooth locus 
of $\mu^{-1}(\xi)^{\chi-s}/G$ inherits an algebraic symplectic structure 
following Lemma \ref{alg mmap def}.
\end{proof}

\subsection{The algebraic symplectic analogue}

The inclusion $V \subset T^*V$, as the 
zero section, realises $V$ as a Lagrangian subvariety of $T^*V$. In fact, the 
zero section $V$ is contained entirely in the zero level set of the algebraic moment map and, as $G$ is reductive, the $G$-equivariant closed immersion 
$j: V \hookrightarrow \mu^{-1}(0)$ induces a closed immersion 
\[  \overline{j} : V/\!/_{\chi} G \hookrightarrow \mu^{-1}(0)/\!/_{\chi} G.\]
If the $G$-action on $\mu^{-1}(0)^{\chi-ss}$ is free, then $\mu^{-1}(0)/\!/_{\chi} G$ 
is a smooth algebraic symplectic variety and $\overline{j}$ is Lagrangian, as $V \subset T^*V$ is Lagrangian. 

\begin{defn}
The algebraic symplectic analogue of $V/\!/_\chi G$ is the algebraic 
symplectic reduction of the $G$-action on $T^*V$ at the central pair $(\chi,0)$.
\end{defn}

The above terminology comes from Proudfoot's notion of a hyperk\"{a}hler 
analogue \cite{proudfootphd}. We rephrase 
Proposition 2.4 of loc. cit. in terms of the algebraic symplectic structure.

\begin{prop}[\cite{proudfootphd}, Proposition 2.4]\label{Lagrangian prop}
Let $G$ be a reductive group acting linearly on an affine space $V$ with respect to 
a character $\chi : G \ra \GG_m$. If the $G$-action on $\mu^{-1}(0)^{\chi-ss}$ 
is free and $V^{\chi-ss}$ is non-empty, then
\begin{enumerate}[i)]
\item $V/\!/_{\chi} G$ is an Lagrangian submanifold of $\mu^{-1}(0)/\!/_{\chi} G$, and
\item $T^*(V/\!/_\chi G)$ is a dense open subset of $\mu^{-1}(0)/\!/_{\chi} G$.
\end{enumerate}
\end{prop}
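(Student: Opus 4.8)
The plan is to work from the defining diagram: we have a $G$-equivariant closed immersion $j : V \hookrightarrow \mu^{-1}(0)$ of the zero section into the zero level set, and since $G$ is reductive this descends to a closed immersion $\overline{j} : V/\!/_\chi G \hookrightarrow \mu^{-1}(0)/\!/_\chi G$. The hypotheses — that $G$ acts freely on $\mu^{-1}(0)^{\chi-ss}$ and that $V^{\chi-ss} \neq \emptyset$ — guarantee by Proposition \ref{alg mww} that $\mu^{-1}(0)/\!/_\chi G$ is a smooth algebraic symplectic manifold, and guarantee that $V/\!/_\chi G$ is itself a nonempty smooth variety (the freeness of the action on $\mu^{-1}(0)^{\chi-ss}$ forces freeness on $V^{\chi-ss} \subset \mu^{-1}(0)^{\chi-ss}$). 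For part (i), the key point is that $V \subset T^*V$ is a Lagrangian subvariety for the Liouville form: with coordinates $x_l, \alpha_l$ it is cut out by $\alpha_1 = \cdots = \alpha_n = 0$, so $\omega = \sum dx_l \wedge d\alpha_l$ restricts to zero and $\dim V = \tfrac12 \dim T^*V$. I would then check that the reduced symplectic form $\omega'$ on $\mu^{-1}(0)/\!/_\chi G$, constructed in Lemma \ref{alg mmap def}(iv) and Proposition \ref{alg mww}, pulls back to zero along $\overline{j}$, and that $\dim V/\!/_\chi G = \tfrac12 \dim \mu^{-1}(0)/\!/_\chi G$. Both follow by descent: the quotient map $\pi : \mu^{-1}(0)^{\chi-ss} \to \mu^{-1}(0)/\!/_\chi G$ is a principal $G$-bundle, $\pi^* \omega'$ agrees with $\omega|_{\mu^{-1}(0)^{\chi-ss}}$ modulo the $G$-directions, and the dimension count is $\dim V - \dim G = \tfrac12(\dim T^*V - 2\dim G)$ using that $V \subset \mu^{-1}(0)$ is cut out by the $n$ equations $\alpha = 0$ while $\mu^{-1}(0)$ has codimension $\dim G$.

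For part (ii), the strategy is to produce an explicit dense open subset of $\mu^{-1}(0)/\!/_\chi G$ and identify it with $T^*(V/\!/_\chi G)$. The natural candidate is the locus lying over $V^{\chi-ss}$ in an appropriate sense: consider the open subvariety $U \subset \mu^{-1}(0)$ consisting of points $(v,\gamma)$ with $v \in V^{\chi-ss}$, which is $G$-invariant and GIT-semistable (indeed stable, by freeness), and form $U/\!/_\chi G = U/G$. I would show first that $U$ is dense in $\mu^{-1}(0)$: since $\mu^{-1}(0)$ is the total space of a vector bundle over a base containing $V$ as an open (the fibre over $v \in V$ being $\{\gamma : \gamma(\fg_* \cdot v) = 0\}$, which has constant rank $n - \dim G$ exactly on the locus where $G$ acts with finite stabilisers, i.e. on an open set containing $V^{\chi-ss}$ by freeness), density of $V^{\chi-ss}$ in $V$ — standard GIT, given nonemptiness — propagates. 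Hence $U/G$ is dense open in $\mu^{-1}(0)/\!/_\chi G$. Then I would exhibit a $G$-equivariant isomorphism $U \cong (V^{\chi-ss} \times \ker)/\!\sim$ realising $U$ as the pullback to $V^{\chi-ss}$ of the conormal-type bundle, and descend it: because $G$ acts freely on $V^{\chi-ss}$ with quotient $V/\!/_\chi G$, the quotient $U/G$ is the total space of the vector bundle on $V/\!/_\chi G$ obtained by descending $\{(v,\gamma) : v \in V^{\chi-ss},\ \gamma(\fg_* v) = 0\}$, and a direct check — or the standard symplectic-reduction-of-a-cotangent-bundle computation (the Marsden–Weinstein reduction of $T^*V$ at $0$ restricted over a free locus is $T^*$ of the quotient) — identifies this descended bundle with $T^*(V/\!/_\chi G)$, compatibly with the symplectic forms.

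The main obstacle I anticipate is the last identification in (ii): checking that the descended vector bundle is genuinely the cotangent bundle of $V/\!/_\chi G$, and not merely some bundle of the right rank, and that the symplectic form matches. The cleanest route is the cotangent-bundle reduction lemma in the algebraic category — if $G$ acts freely on an affine (or quasi-affine) $Y$, then the symplectic reduction $\mu_{T^*Y}^{-1}(0)/G$ is canonically $T^*(Y/G)$ — applied with $Y = V^{\chi-ss}$; this requires that $V^{\chi-ss} \to V/\!/_\chi G$ be a principal $G$-bundle, which we have, and a Luna-slice-type local triviality argument to transport the Liouville form. Alternatively one can avoid this black box entirely by observing that $\overline j$ realises $V/\!/_\chi G$ as a Lagrangian in the smooth symplectic $\mu^{-1}(0)/\!/_\chi G$, invoking a Weinstein-tubular-neighbourhood statement to get an open symplectic embedding of a neighbourhood of the zero section of $T^*(V/\!/_\chi G)$, and then checking the image is all of $U/G$ by the dimension and density facts already established. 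I would present the argument via the cotangent-reduction lemma as the primary line, since it is the most transparent and is exactly the mechanism underlying Proudfoot's original Proposition 2.4.
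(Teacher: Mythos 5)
The paper does not actually prove this proposition: it is imported verbatim from Proudfoot's thesis (the surrounding text only sketches part (i), observing that $\overline{j}$ is Lagrangian because $V\subset T^*V$ is), so there is no in-paper argument to compare yours against. On its merits, your part (i) is correct and complete in outline: $V^{\chi-ss}\subset\mu^{-1}(0)^{\chi-ss}$ (pull back semi-invariants along $T^*V\to V$), freeness descends, the reduced form pulls back to $\omega|_V=0$, and the dimension count $\dim V-\dim G=\tfrac12(2n-2\dim G)$ works because every semistable point of $\mu^{-1}(0)$ is a smooth point of pure codimension $\dim G$ by Lemma \ref{alg mmap def}(iii). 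Likewise, your identification of $U/G$ with $T^*(V/\!/_\chi G)$ via cotangent reduction over the free locus $V^{\chi-ss}$ is the right mechanism and is sound.

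The genuine gap is the density step in (ii). You assert that $\mu^{-1}(0)$ is "the total space of a vector bundle over a base containing $V$" and that density of $V^{\chi-ss}$ in $V$ therefore "propagates" to density of $U$ in $\mu^{-1}(0)$. This is false as stated: the fibre of $\mu^{-1}(0)\to V$ over $v$ is the annihilator of $\fg\cdot v$, whose dimension jumps on the locus of positive-dimensional stabilisers, so $\mu^{-1}(0)$ is a vector bundle only over the locally free locus and is in general reducible, with extra irreducible components of dimension $\geq 2n-\dim G$ lying entirely over $V\setminus V^{\chi-ss}$. The simplest instance is $G=\GG_m$ acting on $V=\AA^1$ with weight one: $\mu^{-1}(0)=\{x\alpha=0\}$ is two lines and $U=\{\alpha=0,\ x\neq 0\}$ is dense in only one of them. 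What actually has to be shown is the weaker but nontrivial statement $\mu^{-1}(0)^{\chi-ss}\subset\overline{U}$, i.e.\ that every component of $\mu^{-1}(0)$ other than $\overline{U}$ is contained in the $\chi$-unstable locus of $T^*V$ (as happens in the example above, where $\{x=0\}$ carries no positive-weight semi-invariants). This is where the hypotheses $V^{\chi-ss}\neq\emptyset$ and freeness on $\mu^{-1}(0)^{\chi-ss}$ must genuinely enter, via the Hilbert--Mumford criterion or the $\GG_m$-action scaling the cotangent fibres, which preserves $\mu^{-1}(0)$ and the semistable locus and degenerates $(v,\gamma)$ to $(v,0)$; it is not a purely topological consequence of $V^{\chi-ss}$ being dense in $V$. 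A secondary, minor point: your fallback via a Weinstein tubular neighbourhood is not available in the algebraic category and in any case would only produce an open symplectic embedding of a neighbourhood of the zero section, not surjectivity onto $U/G$, so the cotangent-reduction route you favour is indeed the one to use.
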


More generally, one has the following result.

\begin{cor}\label{cor alg symp anal open set}
Let $G$ be a reductive group acting linearly on an affine space $V$ 
and $\chi$ be a character of $G$. Then there exist open subvarieties  
$U \subset V/\!/_{\chi} G$ and $U' \subset \mu^{-1}(0)/\!/_{\chi} G$ 
with a commutative square
 \begin{equation*} 
 \xymatrix@1{U  \ar[r]^{} \ar[d]^{} & 
 U' \ar[d]^{\pi}
   \\V/\!/_{\chi} G \ar[r]_{\overline{\iota}} & \mu^{-1}(0)/\!/_{\chi} G }
 \end{equation*}
whose horizontal maps are closed immersions such that
\begin{enumerate}[(i)]
\item $U'$ is an algebraic symplectic manifold;
\item $U$ is a Lagrangian subvariety of $U'$;
\item $T^*U$ is an open set of $U'$, which is dense if non-empty.
\end{enumerate}
\end{cor}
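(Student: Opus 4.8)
The idea is to combine Lemma \ref{alg mmw open}, which already produces an open algebraic symplectic subvariety of $\mu^{-1}(0)/\!/_\chi G$, with the Lagrangian/cotangent-bundle picture of Proposition \ref{Lagrangian prop}, but restricting to the open locus where everything behaves well. First I would set $U' \subset \mu^{-1}(0)/\!/_\chi G$ to be the open subvariety furnished by Lemma \ref{alg mmw open}: concretely, the image under $\pi$ of the set of \emph{stable} points of $\mu^{-1}(0)$ with \emph{trivial} stabiliser, which by Lemma \ref{alg mmap def}(iii) is contained in the smooth locus of $\mu^{-1}(0)$, and on which $\pi$ is a geometric quotient (indeed a principal $G$-bundle); this gives property (i). Call this locus $\mu^{-1}(0)^{\chi-s,\mathrm{free}}$.

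Next I would define $U \subset V/\!/_\chi G$. The closed immersion $j : V \hookrightarrow \mu^{-1}(0)$ induces $\overline{\iota} : V/\!/_\chi G \hookrightarrow \mu^{-1}(0)/\!/_\chi G$ as in the text. Set $U := \overline{\iota}^{-1}(U')$, an open subvariety of $V/\!/_\chi G$, so that the square commutes by construction and the horizontal maps are closed immersions (base change of a closed immersion along the open immersion $U' \hookrightarrow \mu^{-1}(0)/\!/_\chi G$). For (ii): the zero section $V \subset T^*V$ is Lagrangian, hence $j(V) \subset \mu^{-1}(0)$ is isotropic for $\omega$; over the locus $U'$ where $\mu^{-1}(0)$ is smooth and the symplectic form descends (Lemma \ref{alg mmap def}(iv)), the image of $U$ is an isotropic submanifold, and a dimension count — $\dim V/\!/_\chi G = \dim V - \dim G = \tfrac12(\dim T^*V - 2\dim G) = \tfrac12 \dim(\mu^{-1}(0)/\!/_\chi G)$ on the free locus — shows it is Lagrangian, so $U$ is a Lagrangian subvariety of $U'$. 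For (iii): the standard fact that a neighbourhood of a Lagrangian $L$ in a symplectic manifold looks like $T^*L$ (here in the algebraic category, via the conormal/tubular argument, or by the direct computation in Proposition 2.4 of \cite{proudfootphd}) identifies an open neighbourhood of $U$ inside $U'$ with $T^*U$; it is dense in $U'$ when non-empty because $U'$ is irreducible of the right dimension (or, if one prefers, because $T^*U$ surjects onto $U$ which meets every component).

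The main obstacle is that without the freeness hypothesis of Proposition \ref{Lagrangian prop}, one cannot work on all of $\mu^{-1}(0)^{\chi-ss}$, nor even on the full stable locus (finite stabilisers produce orbifold singularities, exactly as in the Kleinian example flagged in the text), so care is needed to ensure the open set $U$ one extracts in $V/\!/_\chi G$ is genuinely non-empty in the cases of interest and that the two open sets $U, U'$ are compatibly chosen — i.e. that $U'$ actually meets $\overline{\iota}(V/\!/_\chi G)$. This is handled by taking $U'$ to be the free stable locus (which automatically contains the generic point of $\overline{\iota}(V/\!/_\chi G)$ whenever $G$ acts generically freely on $V^{\chi-ss}$, and is otherwise possibly empty, in which case the statement is vacuous), and then simply pulling back to define $U$; the remaining verifications are the routine symplectic-linear-algebra and tubular-neighbourhood arguments sketched above, all of which are available in the algebraic category by Luna's slice theorem as used in Proposition \ref{alg mww}.
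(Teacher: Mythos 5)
Your proposal is correct and rests on the same two ingredients as the paper (Lemma \ref{alg mmw open} for the symplectic structure on $U'$, and the argument of Proposition \ref{Lagrangian prop} for (ii) and (iii)), but the open sets are chosen differently, and your choice is the more robust one. The paper takes $U$ and $U'$ to be, \emph{independently}, the smooth loci of the geometric quotients $V^{\chi-s}/G$ and $\mu^{-1}(0)^{\chi-s}/G$, and then says the rest follows as in Proposition \ref{Lagrangian prop}; with that choice one still has to check that $\overline{\iota}$ carries $U$ into $U'$, which is not automatic --- the paper's own Kleinian example shows that a smooth point of the base quotient can map to a singular point of the cotangent-level quotient when finite stabilisers are present. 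Your definition $U:=\overline{\iota}^{-1}(U')$, with $U'$ the quotient of the free stable locus, makes the commutative square and the closed immersions tautological, and the half-dimension count for the Lagrangian condition goes through because the stabiliser of $(v,0)$ equals $G_v$, so points of $V$ lying over $U'$ have trivial stabiliser and $\dim U=\dim V-\dim G=\tfrac12\dim U'$. Two small cautions. First, the Weinstein tubular-neighbourhood theorem is not available in the algebraic category, so for (iii) you must use, as your parenthetical already indicates, Proudfoot's direct identification $T^*(P/G)\cong \bigl(\mu^{-1}(0)\cap T^*V|_P\bigr)/G$ for the principal bundle $P\ra P/G$ over the free locus, which produces an honest open immersion $T^*U\hookrightarrow U'$ rather than a germ of a neighbourhood. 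Second, your density argument leans on irreducibility of $U'$, which can fail since $\mu^{-1}(0)$ need not be irreducible (and the fallback ``$T^*U$ surjects onto $U$'' does not by itself show $T^*U$ meets every component of $U'$); the paper offers nothing beyond a citation of \cite{proudfootphd} here, so this is a shared, minor, loose end rather than a defect specific to your argument.
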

\begin{proof}
Let $U \subset V^{\chi-s}/G$ and $U' \subset \mu^{-1}(0)^{\chi-s}/G$ denote the 
smooth loci of the geometric quotients of the stable sets. Then $U'$ is an algebraic symplectic 
manifold by Lemma \ref{alg mmw open} and the remaining parts follow in 
the same way as in the proof of Proposition \ref{Lagrangian prop}.
\end{proof}

\subsection{Hyperk\"{a}hler reduction}

Over $k = \CC$, the above algebraic symplectic quotient constructions are related to 
a hyperk\"{a}hler quotient construction. The starting point is the observation 
that the cotangent bundle of a complex affine space $V=\AA^n_\CC$ is not only 
algebraic symplectic: the holomorphic symplectic form $\omega =\omega_\CC$ is part 
of a hyperk\"{a}hler structure on $T^*V$ coming from an identification 
$T^*V \cong \HH^n$ where $\HH$ denotes the quaternions. 

Let $1,i,j,k$ denote the standard basis of the quaternions $\HH$; 
then we recall that the standard flat hyperk\"{a}hler structure on $\HH^n$ comes from 
the Quaternionic Hermitian inner product $Q : \HH^n \times \HH^n \ra \HH$ 
given by
\[ Q(z,w) = \sum_{l=1}^n z_lw_l^\dagger\]
where $\dagger$ denotes the Quaternionic conjugate. The triple of complex structures 
$I$,$J$,$K$ on $\HH^n$ are given by left multiplication by $i,j,k$ and the 
hyperk\"{a}hler metric $g$ and corresponding k\"{a}hler forms 
$\omega_I,\omega_j,\omega_K$ are given by the following formula
\[ Q = g + i \omega_I + j \omega_J + k \omega_K.\]
We identify $T^*V \cong \HH^n$ by $ (v,\gamma) \mapsto (v_1 - \gamma_1 j, \dots , v_n - \gamma_n j)$ 
and write the corresponding hyperk\"{a}hler structure on $T^*V$ as $(g,\omega_I,\omega_J,\omega_K)$. 
If we take coordinates $x_1, \dots , x_n$ on $V$ and $\alpha_1,\dots ,\alpha_n$ on 
$V^*$, then
\[ \omega_\RR = \frac{1}{2i} \left( \sum_{k=1}^n dx_k \wedge d \overline{x_k} 
+ d\alpha_k \wedge d \overline{\alpha_k} \right) \quad \text{and} \quad \omega_\CC
 = \sum_{k=1}^n dx_k \wedge d \alpha_k.\]

Let $G$ be a complex reductive group with maximal compact subgroup $K$; 
then $G$ is the complexification of $K$. Suppose that 
we have a linear $G$-action on $V$ such that $K$ acts unitarily. Then we can consider 
the cotangent lift of the $K$-action to $T^*V$. In this case, the triple of K\"{a}hler  
forms are preserved by the action of $K$ on $T^*V$ and, moreover, this $K$-action on 
$T^*V$ is hyper-Hamiltonian; that is, it has a hyperk\"{a}hler moment map 
given by a pair of moment maps $\mu_{HK}=(\mu_\RR,\mu_\CC)$ for the symplectic forms 
$(\omega_\RR := \omega_I,\omega_\CC:= \omega_J + i \omega_K)$. More precisely,
\begin{align*}
\mu_\RR : & \: T^*V \ra \fk^* & & \mu_\RR(v,\gamma) :=  \mu(v) - \mu(\gamma) & \\
\mu_\CC:& \: T^*V \ra \fg^* = \fk_\CC^* & &\mu_\CC(v,\gamma) \cdot A := \gamma(A_v).&
\end{align*}

\begin{rmk}
The action of the maximal compact group $K$ is hyper-Hamiltonian, 
not the action of $G$. For non-reductive groups, there is no longer a bijective 
correspondence with compact groups; thus, it is not immediate how one 
should construct a theory of hyperk\"{a}hler reductions for non-reductive groups. 
\end{rmk}

Let $(\chi_\RR,\chi_\CC)$ be a regular value of the hyperk\"{a}hler moment map 
that is fixed by the coadjoint action. Then the hyperk\"{a}hler reduction of the 
$K$-action on $T^*V$ at $(\chi_\RR,\chi_\CC)$ is defined to be
\[ T^*V/\!/\!/\!/_{(\chi_\RR,\chi_\CC)} K := \mu^{-1}_{HK}((\chi_\RR,\chi_\CC))/K 
= \left( \mu_\RR^{-1}(\chi_\RR) \cap \mu^{-1}_\CC(\chi_\CC) \right)/K.   \]
If the $K$-action on the level set $\mu^{-1}_{HK}((\chi_\RR,\chi_\CC))$ is free, then 
this hyperk\"{a}hler reduction has a unique hyperk\"{a}hler structure compatible with 
the hyperk\"{a}hler structure on the cotangent bundle by \cite{hitchinetal}.  

The hyperk\"{a}hler reduction is the (smooth, rather than algebraic) symplectic 
reduction of the $K$-action on $\mu_\CC^{-1}(\chi_\CC)$ with respect to $\chi_\RR$. 
Hence, by work of Kempf and Ness \cite{kempf_ness}, the hyperk\"{a}hler reduction is 
homeomorphic to the GIT quotient of $G = K_\CC$ acting on $\mu^{-1}_\CC(\chi_\CC)$ with 
respect to the character of $G$ corresponding to $\chi_\RR$ (that is, the algebraic symplectic 
reduction of $G$ acting on $T^*V$ with respect to the central pair $(\chi_\RR,\chi_\CC)$).

Proudfoot proves the following result using the Kempf--Ness homeomorphism and 
the argument outlined in Proposition \ref{Lagrangian prop}.

\begin{prop}[\cite{proudfootphd}, Proposition 2.4]\label{proudfoot1}
Let $G$ be a complex reductive group acting linearly on $V = \AA^n_\CC$ and let 
$\chi \in \fk^*$ be fixed by the coadjoint action. If $\chi$ is a regular value of 
the moment map $\mu$ and $(\chi,0)$ is a regular value of the hyperk\"{a}hler moment 
map $\mu_{HK}$, then $T^*(V/\!/_\chi G)$ is homeomorphic to an open subset of the 
hyperk\"{a}hler reduction $T^*V/\!/\!/\!/_{\chi} K $ which is dense if non-empty.
\end{prop}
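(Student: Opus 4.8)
The plan is to prove this by combining two ingredients already available in the excerpt: the Kempf--Ness homeomorphism between hyperk\"ahler and algebraic symplectic reductions, and the Lagrangian-type statement of Proposition \ref{Lagrangian prop} on the algebraic symplectic side. First I would observe that under the hypotheses, $(\chi,0)$ being a regular value of $\mu_{HK}$ means in particular that $K$ acts locally freely on $\mu_{HK}^{-1}(\chi,0) = \mu_\RR^{-1}(\chi)\cap\mu_\CC^{-1}(0)$; to run the argument I would want genuine (not merely local) freeness, which one gets on a suitable open locus, or one assumes outright as in the companion propositions. Under such freeness, the hyperk\"ahler reduction $T^*V/\!/\!/\!/_\chi K$ is a hyperk\"ahler manifold by \cite{hitchinetal}, and by the discussion immediately preceding the statement it is homeomorphic — via Kempf--Ness for $G=K_\CC$ applied to the $G$-action on $\mu_\CC^{-1}(0)$ linearised by the character corresponding to $\chi$ — to the algebraic symplectic reduction $\mu^{-1}(0)/\!/_\chi G$, i.e. to the algebraic symplectic analogue of $V/\!/_\chi G$.

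Next I would check that the hypotheses here feed into Proposition \ref{Lagrangian prop}. The assumption that $\chi$ is a regular value of $\mu$ together with the analysis in Lemma \ref{alg mmap def}(iii) forces $G$ to act locally freely on $\mu^{-1}(0)$, and combined with the GIT stability analysis (all semistable orbits are closed of top dimension, hence the action on $\mu^{-1}(0)^{\chi-ss}$ is free, as in the proof of Proposition \ref{alg mww}) one sees $V^{\chi-ss}$ is nonempty precisely when it is, and the freeness hypothesis of Proposition \ref{Lagrangian prop} holds. Therefore Proposition \ref{Lagrangian prop} applies and gives that $T^*(V/\!/_\chi G)$ is a dense open subset of $\mu^{-1}(0)/\!/_\chi G$ (and empty exactly when $V^{\chi-ss}$ is empty, in which case the statement is vacuous). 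Transporting this dense open inclusion across the Kempf--Ness homeomorphism yields that $T^*(V/\!/_\chi G)$ is homeomorphic to a dense open subset of $T^*V/\!/\!/\!/_\chi K$, which is the assertion.

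The one point requiring care — and the main potential obstacle — is the interplay between the two different "semistable versus stable" pictures: on the hyperk\"ahler side one works with the honest topological quotient of a genuine level set, whereas on the algebraic side one works with a GIT quotient that a priori only restricts to a geometric quotient on the stable locus. What makes this harmless here is exactly the regular-value hypothesis: regularity of $\chi$ for $\mu$ (hence local freeness, hence finite, hence — by the top-dimension-implies-closed-orbit argument — trivial stabilisers on the semistable set) collapses $\mu^{-1}(0)^{\chi-ss}$ to $\mu^{-1}(0)^{\chi-s}$, so that the GIT quotient is already a geometric quotient and a smooth algebraic symplectic manifold by Proposition \ref{alg mww}. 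Thus the Kempf--Ness homeomorphism is between two honest manifolds and the open dense inclusion $T^*(V/\!/_\chi G)\hookrightarrow \mu^{-1}(0)/\!/_\chi G$ of Proposition \ref{Lagrangian prop} transports cleanly. With that checked, the proof is essentially the one-line composition "$T^*(V/\!/_\chi G)\overset{\text{open dense}}{\hookrightarrow}\mu^{-1}(0)/\!/_\chi G\overset{\text{Kempf--Ness}}{\cong}T^*V/\!/\!/\!/_\chi K$", exactly as Proudfoot's original argument.
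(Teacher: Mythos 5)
Your proposal is correct and is essentially the argument the paper itself points to: the paper offers no proof of this cited result, noting only that Proudfoot establishes it by composing the Kempf--Ness homeomorphism $T^*V/\!/\!/\!/_{\chi} K \cong \mu_\CC^{-1}(0)/\!/_{\chi}G$ with the dense open inclusion $T^*(V/\!/_\chi G) \hookrightarrow \mu_\CC^{-1}(0)/\!/_{\chi}G$ of Proposition \ref{Lagrangian prop}, which is precisely your composition. The one inaccuracy is your parenthetical claim that local freeness yields trivial stabilisers on the semistable locus via the top-dimension/closed-orbit argument (finite non-trivial stabilisers can perfectly well persist on the stable locus), but this does not affect the homeomorphism or the density assertion, which is all the proposition claims.
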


\section{Algebraic symplectic analogues of quotients of $\GG_a$-representations}\label{sec G_a asr}

In this section, we study linear actions of the additive group $\GG_a = \CC^+$ on 
an affine space $V := \AA^n$ over the complex numbers. We restrict to the 
field of complex numbers $k = \CC$, as 
in this case, the ring of invariants 
$\CC[V]^{\GG_a}$ is finitely generated. 
In this section, we describe the construction of an algebraic symplectic 
analogue of an open subset of $V/\!/\GG_a$.

\subsection{Non-reductive GIT for representations of the additive group}\label{sec nonred Ga}

Let $\GG_a$ act linearly on $V \cong \AA^n$ over the complex numbers. A key idea of 
non-reductive GIT is to transfer a non-reductive GIT problem to a reductive setting 
by embedding the non-reductive group in a reductive group; see \cite{dorankirwan}. 
We fix a closed embedding $\GG_a \hookrightarrow \SL_2$ as the unipotent subgroup 
in the upper triangular Borel subgroup of $\SL_2$. In this case, the representation 
$\rho : \GG_a \ra \GL(V)$ factors via the embedding $\GG_a \hookrightarrow \SL_2$ by 
the Jacobson--Morozov theorem (alternatively, we can extend the action to $\SL_2$ by 
putting the derivation of the $\GG_a$-action in Jordan normal form). Then 
\[ \CC[V]^{\GG_a} \cong \CC[V \times_{\GG_a} \SL_2]^{\SL_2} \cong 
\CC[V \times \SL_2/\GG_a]^{\SL_2}\]
and as, moreover, $\SL_2/\GG_a \cong \AA^2 - \{ 0 \}$ has a normal affine 
completion $\AA^2$ with complement of codimension 2, it follows that 
\[ \CC[V]^{\GG_a} \cong \CC[W]^{\SL_2},\]
where $W:= V \times \AA^2$. In particular,  
$ \CC[V]^{\GG_a}$ is finitely generated.

\begin{defn}
For a linear $\GG_a$-action on $V \cong \AA^n$, the non-reductive enveloping quotient
is the morphism of affine varieties 
\[ q : V \ra V/\!/\GG_a := \spec (\CC[V]^{\GG_a}). \]
\end{defn}

The term enveloping quotient was introduced in \cite{dorankirwan} to emphasise the fact that 
the morphism $q$ is not surjective in general (for example, the enveloping quotient 
for the $\GG_a$-representation $V=\sym^1(\AA^2)^{\oplus 2}$ is not surjective). A further difference  
of non-reductive actions is that taking invariants is not exact and so, for an invariant 
closed subset $Z$ of $V$, there may be intrinsic invariants to $Z$ which do not extend 
to invariants on $V$. For a detailed comparison of reductive and non-reductive GIT, see \cite{dorankirwan}. 

The enveloping quotient restricts to a better 
behaved quotient on certain stable sets defined in \cite{dorankirwan}. To define these sets, consider the $\GG_a$-equivariant embeddings
\[ \begin{array}{rccccccc} 
i': & V & \begin{smallmatrix} i \\ \ra \end{smallmatrix}& V \times_{\GG_a} \SL_2 & \ra & W = V \times \AA^2 & \ra & \PP:=\PP(\AA^1 \times W) \\
 &v & \mapsto & [v,I] & \mapsto & (v,(1,0)) & \mapsto & [1:v:1:0],
\end{array} \]
where $\SL_2$ acts trivially on the copy of $\AA^1$ appearing in $\PP$. 
Since $\SL_2$ has no non-trivial characters, the line bundles $\cO_{V \times_{\GG_a} \SL_2}$ and 
$\cO_{\PP}(1)$ admit canonical $\SL_2$-linearisations and we can consider 
the (semi)stable sets for the reductive group $\SL_2$ in the sense of Mumford. 
In the terminology of \cite{dorankirwan} Definition 5.2.5, the pair $(\PP,\cO_\PP(1))$ 
is a fine reductive envelope for the linear $\GG_a$-action on $V$.

\begin{defn}[\cite{dorankirwan}, Definitions 5.1.6 and 5.2.11]\label{defn nonred ss}
For $v \in V$, we make the following definitions.
\begin{enumerate}
\item $v$ is Mumford (semi)stable for the $\GG_a$-action on $V$ if $i(v)$ is 
$\SL_2$-(semi)stable in $V \times_{\GG_a} \SL_2$.
\item $v$ is completely (semi)stable for the $\GG_a$-action on $V$ if $i'(v)$ is 
$\SL_2$-(semi)stable in $\PP$.
\end{enumerate}
Let $V^{m(s)s}$ and $V^{\overline{(s)s}}$ denote the Mumford (semi)stable and 
completely (semi)stable sets.
\end{defn} 

Although the definition of the Mumford (semi)stable locus depends a priori on the choice of 
an embedding of $\GG_a$ into a reductive group, it turns out that 
this notion is intrinsic to the $\GG_a$ action (cf. \cite{dorankirwan} 
Proposition 5.1.9). The following theorem relates these concepts.

\begin{thm}[\cite{dorankirwan}, Theorem 5.3.1]
For $\GG_a$ acting linearly on $V =\cong \AA^n$, there is a commutative diagram
 \[\xymatrix{ V^{\overline{s}} \ar@{^{(}->}[r] \ar@{->>}[d]&  
 V^{ms} = V^{mss}  \ar@{^{(}->}[r] \ar@{->>}[d]  & V^{\overline{ss}} = V \ar[d] \\
 V^{\overline{s}}/\GG_a   \ar@{^{(}->}[r] & V^{ms}/\GG_a  \ar@{^{(}->}[r]& V/\!/G
} \]
where all horizontal maps are open inclusions, the two left most vertical maps are 
locally trivial geometric quotients, and the right vertical map is the enveloping quotient.
\end{thm}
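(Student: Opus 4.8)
The plan is to produce everything from the reductive picture on $\PP = \PP(\AA^1 \times W)$ together with the codimension-two affine completion $\SL_2/\GG_a \cong \AA^2 - \{0\} \hookrightarrow \AA^2$, and then pull the three notions of (semi)stability back along the $\GG_a$-equivariant embeddings $i : V \hookrightarrow V\times_{\GG_a}\SL_2$ and $i' : V \hookrightarrow \PP$. First I would recall from \cite{dorankirwan} that, since $\SL_2$ is semisimple and acts on $\PP$ linearised by $\cO_\PP(1)$, Mumford's $\SL_2$-(semi)stable loci carry good/geometric quotients; restricting to the locally closed $\SL_2$-invariant subvariety $V \times_{\GG_a} \SL_2$ inside $W \subset \PP$, one checks that $\SL_2$-semistability there agrees with $\SL_2$-stability (because $\SL_2/\GG_a$ is affine of dimension $2$ and the $\SL_2$-orbits through points of $V\times_{\GG_a}\SL_2$ are all closed of maximal dimension in that locus), which yields $V^{ms} = V^{mss}$ and gives the geometric quotient $V^{ms}/\GG_a = (V\times_{\GG_a}\SL_2)^{\SL_2\text{-}ss}/\!/\SL_2$, locally trivial by Luna's slice theorem since $\GG_a$ (hence its conjugates) acts freely. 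The same slice argument on the completely semistable locus gives that $V^{\overline{s}}/\GG_a$ is a locally trivial geometric quotient.

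Next I would establish the two open inclusions on the top row. The inclusion $V^{\overline{s}} \hookrightarrow V^{ms}$ follows because a point that is $\SL_2$-stable in the projective completion $\PP$ is a fortiori $\SL_2$-semistable there, and its image lies in the affine chart containing $V\times_{\GG_a}\SL_2$; the Hilbert–Mumford numerical criterion applied to one-parameter subgroups of $\SL_2$ then forces $\SL_2$-(semi)stability in $V\times_{\GG_a}\SL_2$ — this is where the fineness of the envelope (the codimension-$2$ boundary $\AA^2 - (\AA^2-\{0\})$) is used, to rule out destabilising limits running into the added boundary divisor. For the inclusion $V^{ms} \hookrightarrow V^{\overline{ss}} = V$: the key point is that $V^{\overline{ss}} = V$, i.e. every point of $V$ is completely semistable. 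This holds because $i'(v) = [1:v:1:0]$ has nonzero component in the $\SL_2$-fixed summand $\AA^1$ of $\PP(\AA^1\times W)$; for any one-parameter subgroup $\lambda$ of $\SL_2$, the weight of $\cO_\PP(1)$ at the limit point is bounded by the weight on that fixed coordinate, which is $0$, so the Hilbert–Mumford weight is $\le 0$ and $i'(v)$ is $\SL_2$-semistable. That the right-hand vertical map is the enveloping quotient is immediate from the ring isomorphism $\CC[V]^{\GG_a}\cong\CC[W]^{\SL_2}$ recalled in the excerpt, and commutativity of both squares is a matter of tracing the equivariant maps.

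The main obstacle I anticipate is the careful bookkeeping with the Hilbert–Mumford criterion across the three ambient spaces — in particular verifying that no one-parameter subgroup of $\SL_2$ can destabilise a point of $V\times_{\GG_a}\SL_2$ by "escaping" into the boundary $\PP \setminus W$ or the boundary $W \setminus (V\times_{\GG_a}\SL_2)$, which is exactly the content of $(\PP,\cO_\PP(1))$ being a \emph{fine} reductive envelope in the sense of \cite{dorankirwan} Definition 5.2.5. One must also handle the possible non-exactness of invariants noted after the enveloping-quotient definition: the quotient $V/\!/\GG_a = \spec\CC[V]^{\GG_a}$ need not be the orbit space, so the rightmost map is genuinely only an "enveloping" quotient and all that can be asserted is that it restricts to the honest geometric quotients on the left. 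Once the numerical criterion computations are organised — ideally by diagonalising a given one-parameter subgroup and reading off weights on the coordinates $[1:v:1:0]$ versus $[v,I]$ — the remaining assertions (openness, local triviality, commutativity) are formal consequences of standard reductive GIT and Luna's étale slice theorem, so the proof reduces to citing \cite{dorankirwan} for the envelope formalism and checking the one explicit fact that $i'(v)$ always has nonzero fixed-coordinate component.
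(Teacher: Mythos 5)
This theorem is imported verbatim from \cite{dorankirwan} (Theorem 5.3.1); the paper gives no proof of it, so the only fair comparison is with the strategy of that reference, and your overall architecture does match it: work in the reductive envelope $(\PP,\cO_\PP(1))$, pull the three stability notions back along $i$ and $i'$, and use the codimension-two affine completion $\AA^2 \supset \SL_2/\GG_a$ to control destabilising limits. Your argument that $V^{\overline{ss}}=V$ because $i'(v)=[1:v:1:0]$ has a nonzero coordinate in the $\SL_2$-fixed summand is exactly the observation the paper itself records just after the theorem, and the identification of the right-hand vertical arrow with the enveloping quotient via $\CC[V]^{\GG_a}\cong\CC[W]^{\SL_2}$ is correct.

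Two of your justifications, however, are genuinely wrong. First, $\SL_2/\GG_a\cong\AA^2\setminus\{0\}$ is \emph{not} affine (by Matsushima's criterion it cannot be, since $\GG_a$ is not reductive); its non-affineness is precisely why the envelope formalism and the codimension-two completion are needed at all. Moreover the $\SL_2$-orbits in $V\times_{\GG_a}\SL_2$ are not all of maximal dimension: a $\GG_a$-fixed point $v\in V$ gives the orbit $\SL_2\cdot[v,I]\cong\SL_2/\GG_a$ of dimension $2$ sitting alongside generic $3$-dimensional orbits. So your stated reason for $V^{ms}=V^{mss}$ fails; in \cite{dorankirwan} this equality comes instead from the definition of Mumford (semi)stability on the quasi-affine variety $V\times_{\GG_a}\SL_2$, which requires a witnessing invariant $f$ with $(V\times_{\GG_a}\SL_2)_f$ \emph{affine} --- a condition that already excludes the positive-dimensional stabiliser locus from the semistable set, together with the Kostant--Rosenlicht closedness of unipotent orbits. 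Second, Luna's \'etale slice theorem only yields \'etale-local triviality; the \emph{Zariski}-local triviality asserted in the theorem is a consequence of $\GG_a$ being a special group in the sense of Serre, so that every $\GG_a$-torsor is Zariski-locally trivial. With these two repairs your sketch is a faithful reconstruction of the cited proof.
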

 
The completely (semi)stable sets can be computed using the Hilbert--Mumford criterion 
for the $\SL_2$-action on $\PP=\PP(\AA^1 \times V \times \AA^2)$. 
As the $\SL_2$-action on the line $\AA^1$ 
is trivial, we see that $i'(v)=[1:v:1:0]$ is always semistable and so $V^{\overline{ss}} = V$. 
Let $T_{d} \subset \SL_2$ be the diagonal torus; then $T_{d}$ acts with weight zero on $\AA^1$ 
and, as $\SL_2$ acts on $\AA^2$ by its standard representation, the $T_{d}$ weights on 
$\AA^2$ are $(+1,-1)$. Hence, the $T_d$-weight set of $i'(v)$ already contains the weights $0$ and $1$. We state and prove a well-known result that describes the completely stable locus. 

\begin{prop}\label{nonred HM}
Let $\GG_a$ act linearly on $V\cong \AA^n$; then the complement to the completely stable set is the 
non-negative weight space for the diagonal torus $T_d \subset \SL_2$ acting on $V$.
\end{prop}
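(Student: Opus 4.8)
The plan is to compute the $\SL_2$-stable locus of the point $i'(v)=[1:v:1:0]\in\PP=\PP(\AA^1\times V\times\AA^2)$ directly from the Hilbert--Mumford criterion, using that every one-parameter subgroup of $\SL_2$ is conjugate to a power of the standard cocharacter of $T_d$.

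First I would reduce to a single cocharacter. With the convention under which stability means $\mu^{\cO_\PP(1)}(x,\lambda)>0$ for every nontrivial one-parameter subgroup $\lambda$, one has $\mu(x,\lambda_0)=-\min\{\,r : \text{the $\lambda_0$-weight-$r$ part of a lift of $x$ is nonzero}\,\}$ for the standard cocharacter $\lambda_0:t\mapsto\diag(t,t^{-1})$ of $T_d$. Since every nontrivial one-parameter subgroup of $\SL_2$ is $\SL_2$-conjugate to some $\lambda_0^n$ with $n>0$, and $\mu(x,\lambda_0^n)=n\,\mu(x,\lambda_0)$, the point $i'(v)$ is $\SL_2$-stable if and only if $\mu(g^{-1}\cdot i'(v),\lambda_0)>0$ for all $g\in\SL_2$, using $\mu(i'(v),g\lambda_0 g^{-1})=\mu(g^{-1}\cdot i'(v),\lambda_0)$. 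I would also record the $T_d$-weights on the three factors of $\PP$: weight $0$ on $\AA^1$, the weights of $\rho|_{T_d}$ on $V=\bigoplus_w V_w$, and $+1,-1$ on $\AA^2$.

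Next comes a case split on $g$ relative to the Borel $B\subset\SL_2$ stabilising the line $[e_1]\subset\AA^2$ (equivalently, the Borel whose unipotent radical is our fixed $\GG_a$). If $g\notin B$, then $g^{-1}\cdot(1,0)$ has nonzero $e_2$-component, so the weight $-1$ occurs in $g^{-1}\cdot i'(v)$ and $\mu(g^{-1}\cdot i'(v),\lambda_0)\ge 1>0$, with no condition on $v$. If $g\in B$, write $g^{-1}$ as a product of an element of $\GG_a$ and an element of $T_d$: then the $\AA^2$-coordinate of $g^{-1}\cdot i'(v)$ is a nonzero multiple of $e_1$ (weight $+1$ only), the $\AA^1$-coordinate is still $1$ (weight $0$), and the $V$-coordinate is $\rho(g^{-1})v$. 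The key observation — and the only step that needs care — is that $\GG_a$ acts on $V$ by exponentials of the nilpotent weight-raising operator $\rho_*(e)$, with $e$ spanning $\Lie\GG_a$, so that $\rho_*(e)$ maps $V_w$ into $V_{w+2}$; since $T_d$ preserves each $V_w$, it follows that $\rho(g^{-1})v$ has exactly the same minimal $T_d$-weight (and, up to a nonzero scalar, the same minimal-weight component) as $v$. Hence for $g\in B$ we get $\mu(g^{-1}\cdot i'(v),\lambda_0)>0$ if and only if $v$ has a nonzero component in some $V_w$ with $w<0$, a condition independent of $g$.

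Combining the two cases, $i'(v)$ is $\SL_2$-stable precisely when $v$ has a strictly negative $T_d$-weight component, i.e. precisely when $v\notin\bigoplus_{w\ge 0}V_w$; therefore $V\smallsetminus V^{\overline{s}}$ equals the non-negative weight space $\bigoplus_{w\ge 0}V_w$ of $T_d$ on $V$, as claimed. I do not anticipate a genuine obstacle here, consistent with this being a well-known fact; the points to get right are the sign convention for $\mu$ together with the $\mathfrak{sl}_2$-weight conventions, and the observation that the unipotent radical of $B$ acts by weight-raising operators and so cannot lower the minimal weight of $v$ — it is exactly this that makes the negative-weight condition both necessary and sufficient.
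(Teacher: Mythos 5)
Your proof is correct and follows essentially the same route as the paper: apply the Hilbert--Mumford criterion, reduce to the diagonal torus $T_d$ by analysing conjugates, and observe that $i'(v)$ already carries the weights $0$ and $+1$, so stability amounts to $v$ having a negative $T_d$-weight. If anything, your handling of the reduction to $T_d$ --- splitting on $g\in B$ versus $g\notin B$ and noting that the unipotent radical acts by weight-raising operators and so cannot alter the minimal $T_d$-weight of $v$ --- is more careful and complete than the paper's one-line justification of that step.
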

\begin{proof}
It is straight forward to verify that $\SL_2$-stability of $i'(v)$ is equivalent to $T_d$-stability of $i'(v)$, because $[0:0:1:0]$ has a negative and a positive weight for all tori $T \neq T_d$ which are obtained by conjugating $T_d$ by an element in the upper triangular Borel. By the Hilbert--Mumford criterion, $i'(v)$ is $T_d$-stable if and only if $v \in V$ has a 
negative $T_d$-weight, as $i'(v)$ already has a positive $T_d$-weight.
\end{proof}
 
\subsection{The moment map for the cotangent lift}

Let $\GG_a$ act linearly on $V \cong \AA^n$ over the complex numbers;  then this extends to 
a linear $\SL_2$-action $\rho: \SL_2 \ra \GL(V)$. We can consider the cotangent lifts of 
these actions and their associated moments maps, as well as the cotangent lift of the 
$\SL_2$-action on $W := V \times \AA^2$ using Lemma \ref{alg mmap def}. All these moment 
maps can be described using the moment map $\mu_{\SL_2,V} : T^*V \ra \fs \fl_2^*$ 
for the $\SL_2$-action on $T^*V$.

We identify $T^*V \cong V \times V^*$ and consider points $x\in V$ as 
column vectors and points $\alpha \in V^*$ as row vectors. 
The algebraic moment map $\mu_{\SL_2,V} : T^*V \ra \fs \fl^*_2$ is given by
\begin{equation}\label{cx moment map} 
\mu_{\SL_2,V}(x,\alpha) \cdot B = \alpha(\rho_*(B)x) 
\end{equation}
where $\rho_* : \fs \fl^*_2 \ra \fg \fl(V)$ denotes the derivative of the 
representation $\rho$. Let
\[ H = \left( \begin{array}{cc} 1 & 0 \\ 0 & -1 \end{array} \right) \quad \quad 
E = \left( \begin{array}{cc} 0 & 1 \\ 0 & 0 \end{array} \right) \quad \quad 
F = \left( \begin{array}{cc} 0 & 0 \\ 1 & 0 \end{array} \right)\]
denote the standard basis of $\fs\fl_2$ and let $H^*,E^*,F^*$ denote the dual 
basis of $\fs\fl_2^*$. Then, using this basis, we can write
\[ \mu_{\SL_2,V}(x,\alpha) = \Phi_H(x,\alpha) H^* +  \Phi_E(x,\alpha) E^* +  \Phi_F(x,\alpha)F^*\]
where $\Phi_H, \Phi_E, \Phi_F$ are algebraic functions on $T^*V$. 

\begin{rmk}\label{rmk cx mmap irred}
It is clear from the expression for the  
moment map (\ref{cx moment map}) that if our $\SL_2$-representation $V$ decomposes as a sum of 
$\SL_2$-representations $V = V' \oplus V''$, then the moment map 
on $T^*V$ is the sum of the moment maps for each summand; that is, 
$\mu_{\SL_2, V} = \mu_{\SL_2,V'}+ \mu_{\SL_2,V''}$.
\end{rmk}

In particular, it suffices to compute $\mu_{\SL,V}$ for irreducible 
$\SL_2$-representations $V$.

\begin{ex}\label{ex irred reps}
Let us consider the irreducible $\SL_2$-representation $V_{k+1}:= \sym^k (\CC^2)$ of dimension $k+1$. 
The $\GG_a$-action on $V_{k+1}$ is given by the representation $\GG_a \ra \GL(V_{k+1})$   
\[ c \mapsto \left( \begin{array}{ccccc}  
1 &  \left(\begin{smallmatrix} k \\ 1\end{smallmatrix}\right)c & 
\left(\begin{smallmatrix} k \\ 2 \end{smallmatrix}\right) c^2 & \hdots & c^k \\
0 & 1 & \left(\begin{smallmatrix} k- 1 \\ 1\end{smallmatrix}\right) c &  & c^{k-1} \\
\vdots & 0 & 1 & \ddots & \vdots \\
\vdots & \ddots & 0 & \ddots & c \\
0 & \hdots & \hdots & 0 & 1
\end{array}\right).\]
The derivative of the $\SL_2$-representation gives the infinitesimal action 
$\fs \fl_2 \ra \fg \fl_{k+1}$
\[ \left( \begin{array}{cc} h & e \\f & -h \end{array}\right) \mapsto 
\left( \begin{array}{ccccc}  
kh & ke & 0 & \hdots & 0 \\
f & (k-2)h & (k-1)e & \ddots &  \vdots \\
0 & 2f & \ddots & \ddots & 0 \\
\vdots & \ddots & \ddots & \ddots & e \\
0 & \hdots & 0 & kf & -kh
\end{array}\right).\]

We take coordinates $x=(x_1, \dots, x_{k+1})$ on $V_{k+1}$ and 
$\alpha=(\alpha_1, \dots , \alpha_{k+1})$ on $V_{k+1}^*$ and identify 
$T^*V_{k+1} \cong V_{k+1} \times V_{k+1}^*$. 
Then the cotangent lift of the $\SL_2$-action on $V_{k+1}$ has moment map 
\[ \mu_{\SL_2,V_{k+1}} = \Phi^k_H H^* + \Phi^k_E E^* + \Phi^k_F F^* : T^*V_{k+1} \longrightarrow \fs \fl_2^*\]
where
\[ \Phi^k_H(x,\alpha) = \sum_{j=0}^k (k-2j) x_{j+1}\alpha_{j+1},\]
 \[ \Phi^k_E(x,\alpha) = \sum_{j=1}^k (k+1-j)x_{j+1}\alpha_j \quad \text{and} 
  \quad \Phi^k_F(x,\alpha) = \sum_{j=1}^k j x_j \alpha_{j+1}\]
and the $\GG_a$-action has moment map $\mu_{\GG_a,V_{k+1}}= \Phi^k_E : T^*V_{k+1} \ra \CC$. 
\end{ex}

Since the subgroup $\GG_a \hookrightarrow \SL_2$ corresponds to the 
1-dimensional Lie subalgebra of $\fs \fl_2$ spanned by the basis vector $E$, the algebraic moment 
map $\mu_{\GG_a,V}: T^*V \ra (\Lie \GG_a)^* \cong\CC$ for the $\GG_a$-action on 
$T^*V$ is given by $\Phi_E$. 

\begin{rmk}\label{rmk corr sl2 inv}
Under the isomorphism $\CC[T^*V]^{\GG_a} \cong \CC[T^*V \times \AA^2]^{\SL_2}\cong (\CC[T^*V][u,v])^{\SL_2}$, we note that the $\GG_a$-invariant function $\Phi_E$ corresponds to the $\SL_2$-invariant function
\[ f=u^2 \Phi_E -uv \Phi_H -v^2 \Phi_F.\]
Indeed, $\Phi_E$ is obtained by evaluating $f$ at $(u,v) = (1,0)$ and the $\SL_2$-invariance of $f$ follows from the $\SL_2$-equivariance of $\mu_{\SL_2, V}$.
\end{rmk}

As the $\SL_2$-representation $W$ is the direct sum of $V$ and the standard 
$\SL_2$-representation $\AA^2$, the moment map for the $\SL_2$-action on $T^*W$ 
is the sum of the  moment maps for $\SL_2$-acting on $T^*V$ and $T^*\AA^2$. 
By Example \ref{ex irred reps}, the moment map for the $\SL_2$-action on $T^*\AA^2$ is given by
\[\mu_{\SL_2,\AA^2} (u,v,\lambda,\eta)=(u\lambda -v\eta)H^* + v\lambda E^* + u\eta F^*\]
where $(u,v,\lambda,\eta) \in \AA^2 \times \AA^2)^*$. 
Hence, $\mu_{\SL_2,W} : T^*W \ra \fs \fl^*_2$ is given by
\begin{equation}\label{SL2 mmap W}
 \mu_{\SL_2,W}(z,\gamma) = (\Phi_H(x,\alpha) + u\lambda - v \eta ) H^* +  
(\Phi_E(x,\alpha)+v \lambda )E^* +  (\Phi_F(x,\alpha)+ u \eta  )F^*,
\end{equation}
where $z = (x,(u,v)) \in W =V \times \AA^2$ and 
$\gamma = (\alpha, (\lambda,\eta)) \in W^* \cong V^* \times (\AA^2)^*$.

\begin{rmk}
Let $\iota : T^*V \hookrightarrow T^*W$ be the $\GG_a$-equivariant inclusion 
$(x,\alpha) \mapsto (x,(1,0),\alpha, (0,1))$. Then there is a commutative diagram
\[ \xymatrix{
T^*V \ar[d]_{\iota} \ar[r]^{\mu_{\GG_a,V}} & \CC \\
T^*W \ar[r]_{\mu_{\SL_2,W}} & \fs\fl_2^* \ar[u]}
\]
where $\fs \fl_2^* \ra \CC \cong \text{Lie} \: (\GG_a)^*$ is the projection map induced by the inclusion 
$\GG_a \hookrightarrow \SL_2$.
\end{rmk}

\subsection{The geometry of the level sets of the moment map}

We continue to use the notation above: 
we let $V$ be a complex $\GG_a$-representation and let $W = V \times \AA^2$ be the 
associated $\SL_2$-representation. As above, we write the moment map for the 
$\SL_2$-action on $T^*V$ as
\[\mu_{\SL_2,V} = \Phi_H H^* +  \Phi_E E^* +  \Phi_F F^* \]
and we write the moment map for the $\GG_a$-action on $T^*V$ (resp. $\SL_2$-action on $T^*W$) as 
\[ \mu_{\GG_a}  : T^*V \ra \CC \quad \quad (\text{resp. } \: \: \mu_{\SL_2}  : T^*W \ra \fs \fl_2^*). \]

As every element $\xi \in \CC \cong (\Lie \GG_a)^*$ is central, every level set 
$H_V(\xi):= \mu_{\GG_a}^{-1}(\xi)$ is a $\GG_a$-invariant closed subvariety of $T^*V$. 
Henceforth, we exclude a degenerate case: 
if the $\GG_a$-action on $V$ is trivial, then $\mu_{\GG_a} =0$ and so the level sets are either 
empty (for $\xi \neq 0$) or equal to the whole cotangent bundle $T^*V$ (for $\xi = 0$). 

We first describe the level sets for the $\GG_a$-representations $V_{k+1} = \sym^k(\CC^2)$.

\begin{lemma}\label{lem hypersurfaces}
For $k \geq 1$, let $V_{k+1}:= \sym^k (\CC^2)$ be the irreducible $\SL_2$-representation 
of dimension $k+1$ and consider the moment map $\mu_{\GG_a,V_{k+1}} : T^*V_{k+1} \ra \CC$ for 
the cotangent lift of the $\GG_a$-action. For $\xi \in \CC$, the level set 
$H_k(\xi) := \mu_{\GG_a,V_{k+1}}^{-1}(\xi)$ is a quadric affine hypersurface in $T^*V_{k+1}$. 
Moreover, these hypersurfaces have the following properties.
\begin{enumerate}
\item $H_k(\xi)$ is irreducible if and only if $\xi \neq 0$ or $k > 1$. The hypersurface $H_1(0)$ has two irreducible components, which are both isomorphic to $\AA^3$.
\item $H_k(\xi)$ is non-singular if and only if $\xi \neq 0$.
\item For $\xi = 0$, the singular locus is the $\GG_a$-fixed locus; that is, $\sing H_k(0) =(T^*V_{k+1})^{\GG_a}$.
\item $H_k(0)$ is normal if and only if $k > 1$.
\end{enumerate}
\end{lemma}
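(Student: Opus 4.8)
First I would write out the moment map $\mu_{\GG_a,V_{k+1}} = \Phi^k_E$ explicitly from Example \ref{ex irred reps}, so that $H_k(\xi)$ is the affine hypersurface
\[ \sum_{j=1}^k (k+1-j)\, x_{j+1}\alpha_j = \xi \]
in $T^*V_{k+1} \cong \AA^{2(k+1)}$ with coordinates $(x_1,\dots,x_{k+1},\alpha_1,\dots,\alpha_{k+1})$. This is visibly a quadric (degree two, since each monomial is $x_{j+1}\alpha_j$), which disposes of the first sentence. For the singularity analysis I would compute the gradient of $\Phi^k_E - \xi$: its partials are $\partial/\partial x_{j+1} = (k+1-j)\alpha_j$ for $j=1,\dots,k$, $\partial/\partial x_1 = 0$, $\partial/\partial \alpha_j = (k+1-j)x_{j+1}$ for $j=1,\dots,k$, and $\partial/\partial\alpha_{k+1}=0$. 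So the gradient vanishes exactly when $\alpha_1=\cdots=\alpha_k=0$ and $x_2=\cdots=x_{k+1}=0$, i.e.\ on the locus where only $x_1$ and $\alpha_{k+1}$ are free. On that locus $\Phi^k_E = 0$, so for $\xi\neq 0$ this critical locus does not meet $H_k(\xi)$; hence $H_k(\xi)$ is smooth, proving (2). For $\xi=0$ the critical locus is contained in $H_k(0)$ and equals $\sing H_k(0)$ (a hypersurface is singular precisely where the defining polynomial and all its partials vanish); comparing with the infinitesimal $\GG_a$-action from Example \ref{ex irred reps}, which sends $(x,\alpha)\mapsto(\rho_*(E)x, -\alpha\rho_*(E))$, one checks $\rho_*(E)x=0 \iff x_2=\cdots=x_{k+1}=0$ (the Jordan block structure) and likewise $\alpha\rho_*(E)=0 \iff \alpha_1=\cdots=\alpha_k=0$, so the critical locus is exactly $(T^*V_{k+1})^{\GG_a}$, giving (3).

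For (1), irreducibility of the quadric $Q_k := \Phi^k_E - \xi$ follows from the standard fact that a quadratic polynomial is irreducible iff it is not a product of two linear forms, equivalently (over $\CC$) iff the rank of its associated symmetric bilinear form is at least $3$ (for the homogeneous part) or the polynomial is not of the form $\ell_1\ell_2$ or $\ell^2 + c$. I would compute the rank of the symmetric matrix of the quadratic form $\sum_{j=1}^k (k+1-j)x_{j+1}\alpha_j$: it pairs $x_{j+1}$ with $\alpha_j$ with nonzero coefficient for each $j=1,\dots,k$, so it has rank $2k$. When $k\geq 2$ this rank is $\geq 4$, so $Q_k$ is irreducible regardless of $\xi$; when $\xi\neq 0$ it is irreducible for all $k\geq 1$ since $\ell^2+c$ with $c\neq0$ is irreducible and rank $2$ forms $x_2\alpha_1$ factor as $x_2\cdot\alpha_1$ only when $\xi=0$. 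For $k=1$, $\Phi^1_E = x_2\alpha_1$, so $H_1(0) = \{x_2\alpha_1 = 0\} = \{x_2=0\}\cup\{\alpha_1=0\}$, each component being a coordinate hyperplane $\cong \AA^3$ inside $\AA^4$; this gives the claim about $H_1(0)$.

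Finally (4): for $k\geq 2$, $H_k(0)$ is a hypersurface in the smooth variety $\AA^{2(k+1)}$, hence Cohen--Macaulay, and by (3) its singular locus $(T^*V_{k+1})^{\GG_a}$ has codimension $2k$ in $T^*V_{k+1}$, hence codimension $2k-1 \geq 3 \geq 2$ in $H_k(0)$; by Serre's criterion (a Cohen--Macaulay scheme satisfying $R_1$ is normal), $H_k(0)$ is normal. For $k=1$, $H_1(0)$ is reducible and hence not normal (a normal variety is irreducible on each connected component, and $H_1(0)$ is connected since both components contain the origin), which completes (4). The only step requiring genuine care is matching the algebraic critical locus with the $\GG_a$-fixed locus in (3) — one must check both that $\rho_*(E)$ has the stated kernel on $V_{k+1}$ and on $V_{k+1}^*$ via the superdiagonal/subdiagonal structure of the matrix in Example \ref{ex irred reps}; everything else is a direct linear-algebra computation with the explicit quadric.
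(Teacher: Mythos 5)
Your proposal is correct and follows essentially the same route as the paper: write out the explicit quadric $\sum_{j=1}^k(k+1-j)x_{j+1}\alpha_j = \xi$, compute the critical locus of the defining polynomial to get (2) and (3), observe the factorisation $x_2\alpha_1$ in the $k=1$ case for (1), and apply Serre's criterion ($R_1$ plus Cohen--Macaulay/$S_2$ for a hypersurface) via the codimension count $2k-1$ for (4). The only cosmetic differences are that you justify irreducibility via the rank of the quadratic form where the paper argues informally from the disjointness of the monomials, and for the non-normality of $H_1(0)$ you use connectedness plus reducibility where the paper notes the failure of $R_1$; both variants are valid.
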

\begin{proof}
We use the explicit form of the moment map 
\[\mu_{\GG_a,V_{k+1}}(x,\alpha) = \sum_{j=1}^k (k-j+1)x_{j+1}\alpha_j.\]
given by Example \ref{ex irred reps}. As this is a homogeneous quadratic function, 
the corresponding level sets are affine quadric hypersurfaces. We note that each monomial appearing in this hypersurface equation involves distinct variables and, for $k > 1$, we have more than one such monomial; thus, for $k >1$, the hypersurface $H_k(\xi)$ is irreducible. If $ k =1$, then 
$\mu_{\GG_a,V_2}(x,\alpha) = x_2 \alpha_1$ and the hypersurface $H_1(\xi)$ is irreducible 
if and only if $\xi \neq 0$.

The singular loci of these hypersurfaces are 
\[ \sing H_k(\xi) := \{ (x,\alpha) \in H_k(\xi) : x_2 = \cdots = x_{k+1} = \alpha_1 = \cdots = \alpha_k = 0 \}.\]
In particular, $\sing H_k(\xi)$ is empty for $\xi \neq 0$. Conversely, for $\xi =0$, the singular locus of 
$H_k(0)$ is isomorphic to $\AA^2$ and coincides with the $\GG_a$-fixed locus  
\[ (T^*V_{k+1})^{\GG_a}=\{ (x,\alpha)  : x_2 = \cdots = x_{k+1} = \alpha_1 = \cdots = \alpha_k = 0 \} = \sing H_k(0). \]
Since $\text{codim}({\sing H_k(0)}, H_k(0)) = 2k+1 -2 = 2k - 1$, we see that 
$H_k(0)$ is non-singular in codimension 1 if and only if $ k > 1$. 
In particular, $H_1(0)$ is not normal, as it fails to be non-singular in codimension 1. 
Since normality is equivalent to regular in codimension 1 and Serre's $S_2$ condition, which 
automatically holds for irreducible affine hypersurfaces (for example, 
see \cite{mumfordred} III $\S$8 Proposition 2), we conclude that, for $k >1$, the hypersurfaces 
$H_k(0)$ are normal.
\end{proof}

We can then extend this to any $\GG_a$-representation using Remark \ref{rmk cx mmap irred}. 

\begin{prop}\label{prop hypersurfaces}
Let $V$ be a (non-trivial) $\GG_a$-representation and let $\mu_{\GG_a} : T^*V \ra \CC$ 
denote the moment map for the cotangent lift of this action. 
For $\xi \in \CC$, the level set $H_V(\xi):=\mu_{\GG_a}^{-1}(\xi)$ is an 
affine quadric hypersurface in $T^*V$ with the following properties.
\begin{enumerate}
\item $H_V(\xi)$ is irreducible if and only if $\xi \neq 0$ or 
$V \neq \sym^1 (\CC^2) \oplus \sym^0(\CC^2)^{\oplus n}$, as an $\SL_2$-representation, for some $n \geq 0$.
\item $H_V(\xi)$ is non-singular if and only if $\xi \neq 0$.
\item For $\xi = 0$, the singular locus is the $\GG_a$-fixed locus; that is, $\sing H_V(0) =(T^*V)^{\GG_a}$.
\item $H_V(0)$ is normal if and only if $V \neq \sym^1 (\CC^2) \oplus \sym^0(\CC^2)^{\oplus n}$,
as an $\SL_2$-representation, for some $n \geq 0$.
\end{enumerate}
\end{prop}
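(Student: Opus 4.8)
The plan is to reduce the general statement to the irreducible case handled in Lemma \ref{lem hypersurfaces} by exploiting the additivity of the moment map under direct sums of $\SL_2$-representations (Remark \ref{rmk cx mmap irred}). Write $V \cong \bigoplus_{i} V_{k_i+1}$ as a sum of irreducible $\SL_2$-representations, where the trivial summands $\sym^0(\CC^2) = V_1$ contribute nothing to $\mu_{\GG_a}$. Then $\mu_{\GG_a} = \sum_i \Phi^{k_i}_E$, and using the coordinates from Example \ref{ex irred reps} on each nontrivial block, the defining equation of $H_V(\xi)$ is a sum of the quadratic forms $\sum_j (k_i+1-j)x^{(i)}_{j+1}\alpha^{(i)}_j$, one for each nontrivial irreducible summand, each involving its own disjoint set of coordinates, with the trivial summands contributing extra free coordinates on which the equation does not depend. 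So $H_V(\xi)$ is always a quadric affine hypersurface in $T^*V$.

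First I would establish (2) and (3) together. The singular locus of the hypersurface is cut out by the vanishing of all partial derivatives of the defining quadratic form, which (since distinct monomials involve distinct variables) is exactly the locus where $x^{(i)}_{j+1} = 0$ for $j=1,\dots,k_i$ and $\alpha^{(i)}_j = 0$ for $j = 1,\dots,k_i$, intersected with $H_V(\xi)$. For $\xi \neq 0$ this intersection is empty, so $H_V(\xi)$ is smooth; for $\xi = 0$ this locus is nonempty and, by the same computation as in Lemma \ref{lem hypersurfaces} applied blockwise, coincides with $(T^*V)^{\GG_a}$ (the $\GG_a$-fixed points are precisely the points fixed by the unipotent $E$, which kills exactly these coordinates on each block). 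This gives (2) and (3).

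Next, (1): the defining quadratic $Q = \sum_i \sum_j (k_i+1-j)x^{(i)}_{j+1}\alpha^{(i)}_j$ fails to be irreducible exactly when it is a product of two linear forms, equivalently when (after the shift by $\xi$) it has rank $\le 2$ as a quadratic form. Each nontrivial block $V_{k_i+1}$ with $k_i \geq 2$ already contributes a quadratic form of rank $\geq 4$ (it has $\geq 2$ hyperbolic monomials in disjoint variables), and two or more blocks with $k_i = 1$ contribute a rank $\geq 4$ form as well; so the only way to get rank $\leq 2$ is a single block $V_2 = \sym^1(\CC^2)$ together with arbitrarily many trivial summands, giving $Q = x_2\alpha_1$ and the level set $H_V(\xi)$ irreducible iff $\xi \neq 0$, reducing to the $H_1(\xi)$ analysis of Lemma \ref{lem hypersurfaces}. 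In that exceptional case $H_V(0) = \{x_2\alpha_1 = 0\}$ in $T^*V$ is a union of two copies of an affine space, reducible and hence non-normal. For (4): normality is regularity in codimension one plus $S_2$; the $S_2$ condition holds automatically for any irreducible affine hypersurface (\cite{mumfordred} III §8 Proposition 2), so by (1) we only need to check codimension of $\sing H_V(0)$ in the irreducible cases. By (3), $\sing H_V(0) = (T^*V)^{\GG_a}$ has codimension in $H_V(0)$ equal to $2(\sum_i k_i) - 1$ where the sum runs over nontrivial blocks (each nontrivial block $V_{k_i+1}$ kills $2k_i$ coordinates, and $H_V(0)$ has codimension $1$ in $T^*V$), so this codimension is $\geq 2$ as soon as $\sum k_i \geq 2$, i.e. in all irreducible cases except the single exceptional $V_2 \oplus V_1^{\oplus n}$, which is handled above.

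I expect the only genuine subtlety to be the irreducibility criterion in (1): one must be careful that "sum of quadratic forms in disjoint variable sets, plus a constant" has the claimed irreducibility behavior, which comes down to the clean fact that an affine quadric $\{Q(z) = c\}$ is reducible precisely when the projective closure of its quadric is a union of two hyperplanes, i.e. the quadratic form (homogenized by the constant if $c \neq 0$, or $Q$ itself if $c = 0$) has rank $\leq 2$; tracking this rank across the block decomposition is routine but needs to be stated carefully. Everything else is a direct blockwise transcription of Lemma \ref{lem hypersurfaces} via Remark \ref{rmk cx mmap irred}.
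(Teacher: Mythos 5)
Your proof is correct and follows essentially the same route as the paper: decompose $V$ into irreducible $\SL_2$-summands, use the additivity of the moment map to write the defining quadric blockwise, identify the singular locus with the $\GG_a$-fixed locus via the Jacobian criterion, and compute $\mathrm{codim}(\sing H_V(0), H_V(0)) = 2\sum_j k_j - 1$ together with Serre's criterion for (4). Your rank-of-the-quadratic-form argument for (1) is just a more explicit justification of the paper's terser observation that two or more hyperbolic monomials in disjoint variables force irreducibility.
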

\begin{proof}
We extend the $\GG_a$-action on $V$ to $\SL_2$ and 
decompose $V$ as a sum of irreducible $\SL_2$-representations
\[ V = \sym^{k_1}(\CC^2) \oplus \cdots \oplus \sym^{k_m}(\CC^2). \]
On $\sym^{k_j}(\CC^2)$ (resp. $\sym^{k_j}(\CC^2)^*$), 
we take coordinates $x_1^{(j)}, \dots , x_{k_j +1}^{(j)}$ 
(resp. $\alpha_1^{(j)}, \dots , \alpha_{k_j +1}^{(j)}$). 
By Remark \ref{rmk cx mmap irred}, we have
\[ \mu_{\GG_a}(x,\alpha) = \sum_{j=1}^m \mu_{\GG_a,k_j}(x_i^{(j)},\alpha_i^{(j)})
= \sum_{j=1}^m \sum_{i=1}^{k_j} (k_{j} +1 - i)x_{i+1}^{(j)}\alpha_i^{(j)}.\]
Hence
\[ \sing H_V(\xi) := \{ (x,\alpha) \in H_V(\xi) : x_{i+1}^{(j)}= \alpha_i^{(j)} = 0  \: \text{ for }
 \: 1 \leq j \leq m \: \text{ and } \: 1 \leq i \leq k_j \}\]
and we see that (1), (2) and (3) hold. We prove (4) as in Lemma \ref{lem hypersurfaces}: we have 
\[ \text{codim}({\sing H_V(0)}, H_V(0)) =  \sum_{j=1}^m2(k_j+1) -1 - 2m  = 2\sum_{j=1}^m k_j -1 \]
where $k_j \geq 0$, for $j = 1, \dots , m$, and, as we assumed the action was non-trivial, 
there is at least one $j$ such that $k_j>0$. Hence $ \text{codim}({\sing H_V(0)}, H_V(0)) \geq 1$, with 
equality if and only if there exists $j_0$ such that $k_{j_0} =1$ and $k_j = 0$ for all $j \neq j_0$ 
(that is, if and only if $V = \sym^1 (\CC^2) \oplus \sym^0(\CC^2)^{\oplus m-1}$). 
In exactly the same way as in the proof of Lemma \ref{lem hypersurfaces}, $H_V(0)$ is normal if 
and only if $V \neq \sym^1 (\CC^2) \oplus \sym^0(\CC^2)^{\oplus n}$ for some $n \geq 0$.
\end{proof}

\subsection{(Semi)stability on the level sets of the moment map} 
Let $(T^*V)^{\overline{s}}$ be the completely stable set for the $\GG_a$-action and write $H_V(\xi)^{\overline{s}}:=H_V(\xi) \cap (T^*V)^{\overline{s}}$ 
(that is, the completely stable set determined with respect to the ambient invariants on $T^*V$). 

\begin{lemma}\label{lemma comp stable}
Let $V$ be a (non-trivial) $\GG_a$-representation; then fix an extension to $\SL_2$ and write $V$
as a sum of irreducible $\SL_2$-representations
\[ V = \sym^{k_1}(\CC^2) \oplus \cdots \oplus \sym^{k_m}(\CC^2) \]
and take coordinates $x_1^{(j)}, \dots , x_{k_j +1}^{(j)}$ 
(resp. $\alpha_1^{(j)}, \dots , \alpha_{k_j +1}^{(j)}$) on $\sym^{k_j}(\CC^2)$ 
(resp. $\sym^{k_j}(\CC^2)^*$). Then the complement to the completely stable locus is given by  
\[ T^*V - (T^*V)^{\overline{s}} = \{(x,\alpha) : \alpha_i^{(j)} = x_{k_j+2 -i}^{(j)}=0 \: \:
\mathrm{ for } \: \: 1 \leq j \leq m \: \: \mathrm{ and } \: \: 1 \leq i \leq \lceil {k_j}/{2} \rceil \}. \]
\end{lemma}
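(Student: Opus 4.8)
The plan is to apply Proposition \ref{nonred HM} with $T^*V$ playing the role of the affine space acted on by $\GG_a$. Indeed $T^*V \cong V \oplus V^*$ is an affine space, and the cotangent lift $c \cdot (x,\alpha) = (\rho(c)x, \alpha\rho(c)^{-1})$ is a linear $\GG_a$-action which is the restriction of the linear $\SL_2$-action given by the same formula, i.e.\ the cotangent lift of the fixed $\SL_2$-extension on $V$. This $\SL_2$-action extends the $\GG_a$-action and, exactly as in Section \ref{sec nonred Ga}, gives a fine reductive envelope, so Proposition \ref{nonred HM} applies: the complement $T^*V - (T^*V)^{\overline{s}}$ is precisely the non-negative weight subspace $N$ for the diagonal torus $T_d \subset \SL_2$ acting on $T^*V$.

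It then remains to describe $N$ in the given coordinates. First I would record the $T_d$-weights. By the explicit infinitesimal $\SL_2$-action in Example \ref{ex irred reps}, the torus element $\diag(t,t^{-1}) \in T_d$ scales the basis direction of $\sym^{k_j}(\CC^2)$ carrying the coordinate $x_i^{(j)}$ by $t^{\,k_j - 2i + 2}$; since $V^*$ carries the contragredient action $\alpha \mapsto \alpha\rho(c)^{-1}$, the basis direction carrying $\alpha_i^{(j)}$ is scaled by the inverse weight $t^{\,2i - 2 - k_j}$. Hence $(x,\alpha) \in N$ if and only if $x_i^{(j)} = 0$ for all $(i,j)$ with $k_j - 2i + 2 < 0$ and $\alpha_i^{(j)} = 0$ for all $(i,j)$ with $2i - 2 - k_j < 0$.

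Finally I would unwind the two index conditions. On the $\alpha$-side, $2i - 2 - k_j < 0 \iff i < (k_j+2)/2 \iff 1 \leq i \leq \lceil k_j/2 \rceil$, which gives $\alpha_i^{(j)} = 0$ for $1 \leq i \leq \lceil k_j/2 \rceil$. On the $x$-side, $k_j - 2i + 2 < 0 \iff i \geq k_j + 2 - \lceil k_j/2 \rceil$, and the substitution $i \mapsto k_j + 2 - i$ turns $\{k_j + 2 - \lceil k_j/2 \rceil, \dots, k_j + 1\}$ into $\{1, \dots, \lceil k_j/2 \rceil\}$, giving $x_{k_j+2-i}^{(j)} = 0$ for $1 \leq i \leq \lceil k_j/2 \rceil$. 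Assembling these conditions over $1 \leq j \leq m$ reproduces the asserted formula for $T^*V - (T^*V)^{\overline{s}}$.

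The argument is essentially a computation, so the main point requiring care is bookkeeping: remembering that each irreducible summand's $\alpha$-directions carry weights opposite to its $x$-directions (because the cotangent lift acts on $V^*$ through the contragredient representation), and checking that the $x$-side inequality, after the reindexing $i \mapsto k_j+2-i$, collapses to the same range $1 \le i \le \lceil k_j/2 \rceil$ as the $\alpha$-side — which is precisely the source of the asymmetric appearance of $x_{k_j+2-i}^{(j)}$ versus $\alpha_i^{(j)}$ in the statement. One should also note at the outset that restricting the cotangent-lifted $\SL_2$-action to $\GG_a$ recovers the cotangent lift of the $\GG_a$-action, so that Proposition \ref{nonred HM} really is being applied to the action whose completely stable set we want to compute.
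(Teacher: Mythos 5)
Your proposal is correct and follows exactly the paper's argument: apply Proposition \ref{nonred HM} to the affine space $T^*V$ with the cotangent-lifted action, note that the $T_d$-weights on $\sym^{k_j}(\CC^2)$ are $k_j, k_j-2,\dots,-k_j$ and the dual weights on $\sym^{k_j}(\CC^2)^*$ are their negatives, and then identify the non-negative weight space with the stated vanishing locus. Your index bookkeeping (the reindexing $i \mapsto k_j+2-i$ on the $x$-side and the range $1 \le i \le \lceil k_j/2 \rceil$ on both sides) checks out.
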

\begin{proof}
This is a direct consequence of Proposition \ref{nonred HM} 
and the fact that the weights for the diagonal torus $T_d \subset \SL_2$ acting on 
$\sym^{k}(\CC^2)$ are $k, k-2, \dots , -k+2, -k$ and, dually, the $T_d$-weights on 
$\sym^k(\CC^2)^*$ are $-k,-k+2, \dots, k-2,k$.
\end{proof}

An important consequence of this is the following result.

\begin{cor}\label{cor stab hyp}
Let $V$ be a $\GG_a$-representation with moment map 
$\mu_{\GG_a} : T^*V \ra \CC$; then 
\[\mu_{\GG_a}(T^*V - (T^*V)^{\overline{s}}) = 0.\] 
In particular, if $\mu_{\GG_a}(x,\alpha) \neq 0$ then $(x,\alpha)$ is completely stable 
and so, for $\xi \neq 0$, the hypersurfaces $H_V(\xi):= \mu_{\GG_a}^{-1}(\xi)$ are 
completely stable everywhere; that is, $H_V(\xi)^{\overline{s}}=H_V(\xi)$. Furthermore, 
$H_V(0)^{\overline{s}} \subset H_V(0) - H_V(0)^{\GG_a}$.
\end{cor}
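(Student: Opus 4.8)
The plan is to derive the statement directly from the two explicit descriptions already established: the formula for $\mu_{\GG_a}$ (Proposition \ref{prop hypersurfaces}, equivalently Example \ref{ex irred reps}) and the coordinate description of the complement of the completely stable locus (Lemma \ref{lemma comp stable}). Fix an extension of the $\GG_a$-action to $\SL_2$, a decomposition $V = \bigoplus_{j=1}^m \sym^{k_j}(\CC^2)$, and coordinates $x_i^{(j)}$, $\alpha_i^{(j)}$ as in those statements, so that
\[ \mu_{\GG_a}(x,\alpha) = \sum_{j=1}^m \sum_{i=1}^{k_j} (k_j + 1 - i)\, x_{i+1}^{(j)} \alpha_i^{(j)}, \]
while a point $(x,\alpha) \notin (T^*V)^{\overline{s}}$ satisfies $\alpha_i^{(j)} = 0$ for $1 \le i \le \lceil k_j/2 \rceil$ and $x_\ell^{(j)} = 0$ for $k_j + 2 - \lceil k_j/2 \rceil \le \ell \le k_j + 1$.

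First I would check that every monomial $x_{i+1}^{(j)}\alpha_i^{(j)}$ of $\mu_{\GG_a}$ vanishes on $T^*V - (T^*V)^{\overline{s}}$. On that locus, $\alpha_i^{(j)} \neq 0$ forces $i \ge \lceil k_j/2 \rceil + 1$, whereas $x_{i+1}^{(j)} \neq 0$ forces $i+1 \le k_j + 1 - \lceil k_j/2 \rceil$, i.e.\ $i \le \lfloor k_j/2 \rfloor$; since $\lceil k_j/2\rceil + 1 > \lfloor k_j/2\rfloor$, these conditions are incompatible, so the monomial vanishes. Hence $\mu_{\GG_a}(T^*V - (T^*V)^{\overline{s}}) = \{0\}$, and contrapositively $\mu_{\GG_a}(x,\alpha) \neq 0$ implies $(x,\alpha) \in (T^*V)^{\overline{s}}$. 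In particular, for $\xi \neq 0$ every point of $H_V(\xi) = \mu_{\GG_a}^{-1}(\xi)$ is completely stable, so $H_V(\xi)^{\overline{s}} = H_V(\xi)$.

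For the last inclusion, I would note that the $\GG_a$-fixed locus $(T^*V)^{\GG_a} = \{(x,\alpha) : x_{i+1}^{(j)} = \alpha_i^{(j)} = 0 \text{ for all } 1 \le j \le m,\ 1 \le i \le k_j\}$ (as computed in the proof of Proposition \ref{prop hypersurfaces}) is contained in the unstable locus of Lemma \ref{lemma comp stable}, since every coordinate required to vanish there lies among those vanishing on the fixed locus; alternatively, a $\GG_a$-fixed point has non-finite stabiliser and so cannot be stable. Intersecting with $H_V(0)$ gives $H_V(0)^{\GG_a} \subseteq H_V(0) - H_V(0)^{\overline{s}}$, which rearranges to $H_V(0)^{\overline{s}} \subseteq H_V(0) - H_V(0)^{\GG_a}$.

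There is no genuine obstacle here: the entire content is the index bookkeeping of the second paragraph, and it is transparent once one observes that each monomial of $\mu_{\GG_a}$ pairs a low-index covector coordinate $\alpha_i^{(j)}$ with a high-index vector coordinate $x_{i+1}^{(j)}$ (their indices differing by exactly one), while instability forces precisely the low-index $\alpha$'s and the high-index $x$'s to vanish, and these two ranges never overlap.
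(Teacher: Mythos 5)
Your proof is correct and follows essentially the same route as the paper's: both rest on the explicit monomial form of $\mu_{\GG_a}$ together with the coordinate description of $T^*V - (T^*V)^{\overline{s}}$ from Lemma \ref{lemma comp stable}, the only cosmetic difference being that the paper splits the sum at $i = \lceil k_j/2 \rceil$ and kills each half wholesale, while you argue monomial by monomial that the two non-vanishing conditions are incompatible. Your explicit verification of the final inclusion $H_V(0)^{\overline{s}} \subset H_V(0) - H_V(0)^{\GG_a}$ is a welcome addition, since the paper's proof leaves that step implicit.
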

\begin{proof}
From the proof of Proposition \ref{prop hypersurfaces}, the moment map is given by
\[ \mu_{\GG_a}(x,\alpha) =  \sum_{j=1}^m \left( \sum_{i=1}^{\lceil {k_j}/{2} \rceil} 
(k_{j} +1 - i)x_{i+1}^{(j)}\alpha_i^{(j)} +  \sum^{k_j}_{i=\lceil {k_j}/{2} \rceil +1} 
(k_{j} +1 - i)x_{i+1}^{(j)}\alpha_i^{(j)} \right).\]
On $T^*V - (T^*V)^{\overline{s}}$, the first part of this expression vanishes, as the $\alpha_i^{(j)}$s 
are zero for $ 1 \leq i \leq \lceil {k_j}/{2} \rceil$, and the second part vanishes, as the 
$x_{i+1}^{(j)}$s are zero for $\lceil{k_j}/{2} \rceil +1 \leq i \leq k$.
\end{proof}

\subsection{Algebraic symplectic reduction at non-zero values}

In this section, we construct an algebraic symplectic reduction of the linear $\GG_a$-action on $T^*V$ at 
$\xi \in \CC^*$ by taking the $\GG_a$-quotient of the level set $H_V(\xi)=\mu_{\GG_a}^{-1}(\xi)$. We first describe this quotient.

\begin{thm}\label{thm fg nonzero}
Let $\GG_a$ act linearly on an affine space $V$ with moment map $\mu_{\GG_a} : T^*V \ra \CC$. For $\xi \neq 0$, consider the hypersurface $H_V(\xi):= \mu_{\GG_a}^{-1}(\xi)$. Then we have the following:
\begin{enumerate}[\quad i)]
\item $H_V(\xi)^{\overline{s}} = H_V(\xi)^{ms} = H_V(\xi)$,
\item $H_V(\xi)$ admits a locally trivial geometric quotient $ H_V(\xi)/\GG_a$, which is a smooth,
\item $ H_V(\xi)/\GG_a$ is affine and so $\CC[H_V(\xi)]^{\GG_a}$ is finitely generated,
\item all invariants in $\CC[H_V(\xi)]^{\GG_a}$ are restrictions of invariants in $\CC[T^*V]^{\GG_a}$,
\item $H_V(\xi) \ra H_V(\xi)/\GG_a$ is a trivial $\GG_a$-torsor.
\end{enumerate}
In particular, $H_V(\xi)/\GG_a=H_V(\xi)/\!/ \GG_a:=\spec \CC[H_V(\xi)]^{\GG_a}$ and the non-reductive 
quotient $\pi : H_V(\xi) \ra  H_V(\xi)/\!/ \GG_a$ is surjective.
\end{thm}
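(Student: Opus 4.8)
The plan is to prove the five numbered statements in a logical order, deducing the later ones from the earlier ones, and then to assemble the final summary statement. The starting point is Corollary \ref{cor stab hyp}, which gives us i) for free: since $\mu_{\GG_a}(T^*V \setminus (T^*V)^{\overline{s}}) = 0$, every point of $H_V(\xi)$ with $\xi \neq 0$ is completely stable, so $H_V(\xi)^{\overline{s}} = H_V(\xi)$; the sandwich $V^{\overline{s}} \subset V^{ms} \subset V^{\overline{ss}}$ from Theorem 3.11 (applied on $T^*V$) then forces $H_V(\xi)^{ms} = H_V(\xi)$ as well. Statement ii) follows from Theorem 3.11 too: on the Mumford stable locus the enveloping quotient restricts to a locally trivial geometric quotient $H_V(\xi) \to H_V(\xi)/\GG_a$, and smoothness of the quotient is immediate because $H_V(\xi)$ itself is smooth (Proposition \ref{prop hypersurfaces}(2)) and $\GG_a$ acts locally freely, so the quotient map is smooth.

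The real content is in iii), iv) and v), which I would prove together by producing an explicit $\GG_a$-equivariant trivialisation. The key observation is that $H_V(\xi) = \mu_{\GG_a}^{-1}(\xi)$ and $\mu_{\GG_a}$ is the function $\Phi_E$, which — reading off the formula in Proposition \ref{prop hypersurfaces} — is a nonconstant polynomial that is \emph{affine-linear} in a suitable set of coordinates once one fixes the remaining ones; more precisely, the infinitesimal generator of the $\GG_a$-action annihilates $\Phi_E$ (since $\Phi_E$ is a moment-map coordinate at a central value), yet there is a coordinate function $t$ on $T^*V$ on which the $\GG_a$-action is by translation $t \mapsto t + c$ (for a $\GG_a$-representation with a nonzero $E$-action such a coordinate exists: take a coordinate dual to a highest weight vector, or more robustly, exploit that the $\GG_a$-action on $T^*V$, being nontrivial and linear, has a slice coordinate). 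The strategy is: exhibit a $\GG_a$-invariant function $g$ on $H_V(\xi)$ and a function $t$ restricting to a global slice, so that $(g, t)$-type coordinates split $H_V(\xi) \cong (H_V(\xi)/\GG_a) \times \GG_a$ equivariantly. Concretely I expect to write $H_V(\xi)$ using the $\SL_2$-picture: $\CC[T^*V]^{\GG_a} \cong \CC[T^*W]^{\SL_2}$, and $\Phi_E$ corresponds to $f = u^2\Phi_E - uv\Phi_H - v^2\Phi_F$ by Remark \ref{rmk corr sl2 inv}; the condition $\Phi_E = \xi \neq 0$ lets one solve for the translation parameter, giving a section. This simultaneously shows the quotient is affine (it is a closed subvariety of an affine space cut out by the invariants), gives finite generation of $\CC[H_V(\xi)]^{\GG_a}$ (iii), shows the torsor is trivial (v), and — by the existence of the section together with the fact that a trivial torsor's coordinate ring is $\CC[H_V(\xi)/\GG_a][t]$, every invariant being pulled back — gives iv). Alternatively, iv) can be argued via the long exact sequence / vanishing of $H^1(\GG_a, -)$ using unipotence, once one knows the torsor is locally trivial in the Zariski topology, which it is by ii).

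Finally, I would assemble the summary: the equality $H_V(\xi)/\GG_a = H_V(\xi)/\!/\GG_a$ is just the statement that the geometric quotient of ii) is affine with coordinate ring the invariants, which is iii) together with iv) (the latter ensuring the quotient map to $\spec \CC[H_V(\xi)]^{\GG_a}$ agrees with the geometric quotient), and surjectivity of $\pi$ is then automatic since a geometric quotient map is surjective. I expect the main obstacle to be step v)/the explicit trivialisation: one must be careful that a single global slice coordinate $t$ exists for an \emph{arbitrary} nontrivial linear $\GG_a$-representation $V$ (not just an irreducible one), and that the candidate section actually lands in $H_V(\xi)$ and is $\GG_a$-equivariant. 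Decomposing $V$ into $\SL_2$-irreducibles and choosing the slice coordinate from the summand $\sym^{k_{j_0}}(\CC^2)$ with $k_{j_0}$ maximal — say the coordinate $x_1^{(j_0)}$ or $\alpha_{k_{j_0}+1}^{(j_0)}$, on which $\GG_a$ acts by an explicit affine-linear shift — should do it, but verifying equivariance and that the resulting map is an isomorphism onto $H_V(\xi)/\GG_a \times \GG_a$ is where the genuine (if routine) computation lies.
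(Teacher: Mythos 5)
Your treatment of i) and ii) matches the paper: Corollary \ref{cor stab hyp} gives complete stability of all of $H_V(\xi)$, and smoothness of the locally trivial geometric quotient follows from smoothness of $H_V(\xi)$ (Proposition \ref{prop hypersurfaces}). Your idea for v) --- trivialise the torsor by a global slice --- is also the paper's idea. But your concrete candidate for the slice is wrong: no \emph{linear} coordinate such as $x_1^{(j_0)}$ or $\alpha_{k_{j_0}+1}^{(j_0)}$ can work, because the derivation $D$ of a linear $\GG_a$-action sends linear forms to linear forms, and a nonconstant linear form cannot restrict to the constant $1$ on the quadric $H_V(\xi)$ (the degree-one polynomial $\ell-1$ is not divisible by the irreducible quadric $\mu_{\GG_a}-\xi$). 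The correct slice is \emph{quadratic}: the paper exhibits $\sigma'=\sum_i\lambda_i x_i\alpha_i$ with $D(\sigma')=\mu_{\GG_a}$ (summand by summand, for suitable constants $\lambda_i$), so that $\sigma:=\sigma'/\xi$ has $D(\sigma)=1$ on $H_V(\xi)$ precisely because $\mu_{\GG_a}=\xi\neq 0$ there. This is also why no such slice exists on $H_V(0)$ or on all of $T^*V$ (the origin is fixed), so your remark that the action on $T^*V$ "has a slice coordinate" is not robust --- the nonvanishing of $\xi$ is essential.

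The more serious gap is iv). Knowing that $H_V(\xi)\cong (H_V(\xi)/\GG_a)\times\GG_a$ shows every invariant on $H_V(\xi)$ is pulled back from the quotient; it does \emph{not} show it is the restriction of an invariant on the ambient $T^*V$, which is what iv) asserts. Your fallback via vanishing of $H^1(\GG_a,-)$ is false: rational cohomology of $\GG_a$ does not vanish in degree one (the standard non-split extension $0\to\CC\to\CC^2\to\CC\to 0$ already gives $H^1(\GG_a,\CC)\neq 0$), and indeed the non-exactness of $\GG_a$-invariants is exactly what produces the non-surjectivity of restriction for $\xi=0$ in Example \ref{ex non fg}; since the ideal $(\mu_{\GG_a}-\xi)\cong\CC[T^*V]$ as a $\GG_a$-module for every $\xi$, the long exact sequence alone cannot distinguish $\xi\neq 0$ from $\xi=0$. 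The paper's route to iii) and iv) avoids this by passing to the reductive envelope: $\overline{\SL_2\times_{\GG_a}H_V(\xi)}\subset\AA^2\times T^*V$ is cut out by $f-\xi$ with $f=u^2\Phi_E-uv\Phi_H-v^2\Phi_F$ (Remark \ref{rmk corr sl2 inv}), and the boundary $u=v=0$ forces $f=0\neq\xi$, so $\SL_2\times_{\GG_a}H_V(\xi)$ is already closed and affine; its $\SL_2$-GIT quotient is then affine and equals $H_V(\xi)/\GG_a$, and surjectivity of $\CC[T^*V\times\AA^2]^{\SL_2}\to\CC[\SL_2\times_{\GG_a}H_V(\xi)]^{\SL_2}$ holds because $\SL_2$ is reductive, which translates into iv). (Alternatively one can salvage your approach by localising at the invariant function $\mu_{\GG_a}$ and using that $H_V(\CC^*)\to H_V(\CC^*)/\GG_a$ is a trivial torsor, but that argument needs to be made explicitly; as written, iv) is unproved.)
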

\begin{proof}
The first statement is Corollary \ref{cor stab hyp} and it follows that there is a Zariski locally trivial geometric quotient $H_V(\xi) \ra H_V(\xi)/\GG_a$. By Proposition \ref{prop hypersurfaces}, $H_V(\xi)$ is smooth and so the quotient $H_V(\xi)/\GG_a$ is also smooth which gives ii). The quotient $H_V(\xi)/\GG_a$ is a priori only quasi-affine; however, 
we will show that $\SL_2 \times_{\GG_a} H_V(\xi)$ is affine and deduce from this that the quotient is affine. Consider the closure 
\[\overline{\SL_2 \times_{\GG_a} H_V(\xi)}\] 
of $\SL_2 \times_{\GG_a} H_V(\xi)$ in $\AA^2 \times T^*V$. By Remark \ref{rmk corr sl2 inv}, this hypersurface is defined 
by the vanishing of $f - \xi$ where
\[ f=u^2 \Phi_E -uv \Phi_H - v^2 \Phi_F.\]
The boundary, which is given by setting $(u,v) = (0,0)$, is empty and so $\SL_2 \times_{\GG_a} H_V(\xi)$ is affine. 
As the reductive GIT-quotient of an affine variety is affine, it follows that 
\[ H_V(\xi)/\GG_a \cong (\SL_2 \times_{\GG_a} H_V(\xi))/\SL_2 \]
is affine. Hence, $H_V(\xi)/\GG_a$ is affine and so has finitely generated coordinate ring 
\[\CC[H_V(\xi)/\GG_a] \cong \CC[H_V(\xi)]^{\GG_a}\cong \CC[\SL_2 \times_{\GG_a} H_V(\xi)]^{\SL_2}.\]
Moreover, we have a surjection 
\[\CC[T^*V]^{\GG_a} \cong \CC[T^*V \times \AA^2]^{\SL_2} \twoheadrightarrow \CC[\SL_2 \times_{\GG_a} H_V(\xi)]^{\SL_2} \cong \CC[H_V(\xi)]^{\GG_a}\]
which completes the proof of iii-iv).

Finally, to show $H_V(\xi) \ra H_V(\xi)/\GG_a$ is isomorphic to the trivial $\GG_a$-torsor, we show there 
exists a function $\sigma \in \CC[H_V(\xi)]$ whose derivation under the $\GG_a$-action is 1, 
which gives a section perpendicular to the $\GG_a$-action that enables us to define an isomorphism 
with the trivial $\GG_a$-torsor over $H_V(\xi)/\GG_a$. More precisely, we claim that there 
exists $\sigma'$ such that
$\mu_{\GG_a}=D(\sigma')$, from which it follows that $\sigma:=\sigma'/\xi$ has derivation $1$ 
in $\CC[H_V(\xi)]$. It suffices to provide $\sigma'$ for the irreducible $\SL_2$-representations 
$V = \sym^n \CC^2$: if we take coordinates $x_i,\alpha_i$ on $T^*V$ as above, then, for appropriate choices of constants $\lambda_i$, the image of $\sigma':= \sum_{i=1}^{n+1} \lambda_i x_i\alpha_i$ under the derivation is $\mu_{\GG_a}$. 
\end{proof}

The following theorem proves that the algebraic symplectic reductions $H_V(\xi)/\GG_a$ are algebraic 
symplectic manifolds, which fit together to give a 1-parameter family of algebraic symplectic reductions. Moreover, the fibres of this 1-parameter family are isomorphic as algebraic varieties, but not as algebraic symplectic varieties; instead, they are \lq scaled symplectomorphic' to each other.

\begin{thm}\label{1-param family red}
For $\xi \neq 0$, the quotient $\mathcal{X}_{\xi}:=H_V(\xi)/\GG_a$ is an algebraic symplectic manifold with symplectic form 
$\omega_\xi$ induced by the Liouville form $\omega$, which form a 1-parameter family
 \begin{equation*}\label{comm lagrang} 
 \xymatrix@1{\mathcal{X}_{\xi} \ar@{^{(}->}[r]^{} \ar[d] & 
 \mathcal{X} \ar[d]
   \\ \quad \xi \quad \ar@{^{(}->}[r] &\quad  \CC^* \quad }
  \end{equation*}
  where $H_V(\CC^*):=\mu_{\GG_a}^{-1}(\CC^*) \ra \mathcal{X}:=H_V(\CC^*)/\GG_a$ is a trivial $\GG_a$-torsor.  
 Furthermore, for $\xi_1,\xi_2 \in \CC^*$, there is an isomorphism 
 \[\Phi_{\xi_1,\xi_2} :H_V(\xi_1)/\GG_a \ra H_V(\xi_2)/\GG_a\]
 of algebraic varieties which scales the symplectic forms as follows: 
 $\xi_2 \omega_{\xi_1} = \xi_1 \Phi^*_{\xi_1,\xi_2}\omega_{\xi_2}$.
 \end{thm}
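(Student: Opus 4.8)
The plan is to run the algebraic Marsden--Weinstein--Meyer argument of Proposition \ref{alg mww} for the free $\GG_a$-action on each level set, then assemble the $1$-parameter family by working over all of $\CC^*$ at once, and finally produce the isomorphisms $\Phi_{\xi_1,\xi_2}$ from the fibrewise scaling automorphism of the cotangent bundle. For the first point, fix $\xi\neq 0$: by Theorem \ref{thm fg nonzero} and Proposition \ref{prop hypersurfaces}(2), $H_V(\xi)$ is a smooth affine variety on which $\GG_a$ acts freely, with $\pi_\xi\colon H_V(\xi)\to\mathcal{X}_\xi=H_V(\xi)/\GG_a$ a trivial, hence principal and locally trivial, $\GG_a$-torsor over the smooth affine variety $\mathcal{X}_\xi$. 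At each $p\in H_V(\xi)$ the differential $d_p\pi_\xi$ identifies $T_{\pi_\xi(p)}\mathcal{X}_\xi$ with $T_pH_V(\xi)/(\Lie\GG_a)$, so Lemma \ref{alg mmap def}(iv) furnishes a non-degenerate $2$-form $\omega_\xi$ on $\mathcal{X}_\xi$: it is well defined because $\omega$ is $\GG_a$-invariant, algebraic because $\pi_\xi$ is a principal bundle, and closed because $\pi_\xi^*\omega_\xi=\omega|_{H_V(\xi)}$ is closed and $\pi_\xi^*$ is injective on differential forms. This is exactly the argument of Proposition \ref{alg mww}, which needs only that $\GG_a$ act scheme-theoretically freely on the smooth affine variety $H_V(\xi)$ with geometric quotient, all guaranteed by Theorem \ref{thm fg nonzero}.

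\emph{The family $\mathcal{X}\to\CC^*$.} The open subset $H_V(\CC^*)=\mu_{\GG_a}^{-1}(\CC^*)=T^*V\setminus H_V(0)$ of $T^*V$ is smooth and carries a free $\GG_a$-action. I would take the function $\sigma'$ constructed in the proof of Theorem \ref{thm fg nonzero} (built additively over a decomposition of $V$ into irreducible $\SL_2$-summands) with $D(\sigma')=\mu_{\GG_a}$, and use that $\mu_{\GG_a}$ is $\GG_a$-invariant and nowhere vanishing on $H_V(\CC^*)$: then $\sigma:=\sigma'/\mu_{\GG_a}$ has derivation $1$ there, its zero locus is a global section, and so $H_V(\CC^*)\to\mathcal{X}:=H_V(\CC^*)/\GG_a$ is a trivial $\GG_a$-torsor over a smooth variety. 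The $\GG_a$-invariant morphism $\mu_{\GG_a}$ descends to $\mathcal{X}\to\CC^*$, and since $\mu_{\GG_a}$ is pulled back from the base, its fibre over $\xi$ is $\mu_{\GG_a}^{-1}(\xi)/\GG_a=\mathcal{X}_\xi$, which yields the commutative square.

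\emph{The isomorphisms $\Phi_{\xi_1,\xi_2}$ and the scaling of $\omega$.} Because $\mu_{\GG_a}$ is bilinear in $(x,\alpha)$, it is homogeneous of degree one in the cotangent-fibre coordinate $\alpha$; hence the fibrewise scaling $m_t\colon T^*V\to T^*V$, $(x,\alpha)\mapsto(x,t\alpha)$, restricts to an isomorphism $H_V(\xi_1)\to H_V(\xi_2)$ for $t=\xi_2/\xi_1$, and it is $\GG_a$-equivariant since the cotangent lift $c\cdot(x,\alpha)=(\rho(c)x,\alpha\rho(c)^{-1})$ commutes with scaling of $\alpha$. It therefore descends to an isomorphism of algebraic varieties $\Phi_{\xi_1,\xi_2}\colon\mathcal{X}_{\xi_1}\to\mathcal{X}_{\xi_2}$ satisfying $\Phi_{\xi_1,\xi_2}\circ\pi_{\xi_1}=\pi_{\xi_2}\circ m_t$. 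From $m_t^*\omega=t\,\omega$ (immediate from $\omega=\sum_l dx_l\wedge d\alpha_l$) one obtains $\pi_{\xi_1}^*(\Phi_{\xi_1,\xi_2}^*\omega_{\xi_2})=m_t^*(\omega|_{H_V(\xi_2)})=t\,(\omega|_{H_V(\xi_1)})=\pi_{\xi_1}^*(t\,\omega_{\xi_1})$, and injectivity of $\pi_{\xi_1}^*$ gives $\Phi_{\xi_1,\xi_2}^*\omega_{\xi_2}=(\xi_2/\xi_1)\,\omega_{\xi_1}$, i.e.\ $\xi_2\,\omega_{\xi_1}=\xi_1\,\Phi_{\xi_1,\xi_2}^*\omega_{\xi_2}$. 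The argument is essentially routine given Theorem \ref{thm fg nonzero} and Lemma \ref{alg mmap def}; the one place deserving care is the bookkeeping in this last step, namely the functoriality of the ``induced form on the quotient'', which is exactly the identity $\Phi_{\xi_1,\xi_2}\circ\pi_{\xi_1}=\pi_{\xi_2}\circ m_t$ together with the injectivity of $\pi_{\xi_1}^*$ on forms, both clear from the trivial-torsor structure.
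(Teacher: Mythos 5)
Your proof is correct and follows essentially the same route as the paper's: smoothness and freeness of the action on $H_V(\xi)$ from Theorem \ref{thm fg nonzero}, the induced symplectic form from Lemma \ref{alg mmap def}, the trivial torsor over $\CC^*$ via the section $\sigma'/\mu_{\GG_a}$, and a $\GG_a$-equivariant scaling of $T^*V$ to identify the fibres. The only (immaterial) difference is that you scale the fibre coordinates $\alpha$ where the paper scales the base coordinates $x$; both maps are $\GG_a$-equivariant, carry $H_V(\xi_1)$ to $H_V(\xi_2)$, and multiply $\omega$ by the same factor $\xi_2/\xi_1$, so the scaling identity comes out identically.
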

\begin{proof}
By Theorem \ref{thm fg nonzero}, the quotient $H_V(\xi)/\GG_a$ is a smooth 
affine variety for $\xi \neq 0$ and, 
moreover, it has an algebraic symplectic form $\omega_\xi$ induced 
by the Liouville form $\omega$ on $T^*V$ by Lemma \ref{alg mmap def}. 

The linear action of $\GG_a$ on the affine space $T^*V$ has finitely 
generated ring of invariants and so we can 
consider the non-reductive enveloping quotient $\pi: T^*V \ra T^*V/\!/\GG_a:=\spec\CC[T^*V]^{\GG_a}$, which 
restricts to a Zariski locally trivial geometric quotient on the completely stable locus. By Corollary \ref{cor stab hyp}, 
\[H_V(\CC^*):=\mu_{\GG_a}^{-1}(\CC^*) \subset (T^*V)^{\overline{s}}\] 
and so we have a Zariski locally trivial geometric 
quotient $H_V(\CC^*) \ra H_V(\CC^*)/\GG_a$. The morphism $\mathcal{X}:=H_V(\CC^*)/\GG_a \ra \CC^*$ defining this 1-parameter family is given by the $\GG_a$-invariant 
function $\mu_{\GG_a}$ and restricts to the quotient $H_V(\xi)/\GG_a$ at $\xi \in \CC^*$, as every $\GG_a$-invariant function 
on $H_V(\xi)$ extends to $\GG_a$-invariants on $T^*V$. Moreover, $H_V(\CC^*) \ra H_V(\CC^*)/\GG_a$ is a trivial $\GG_a$-torsor, 
as the derivation of the function $\sigma'/\mu_{\GG_a} \in \CC[H_V(\CC^*)]= \CC[T^*V]_{\mu_{\GG_a}}$ is 1, where $\sigma'$ 
is the function appearing in the proof of Theorem 3.14.

Let $\phi : T^*V \ra T^*V$ be the map given by multiplication by $C=\xi_2/\xi_1$ on $V$ and the identity map on $V^*$;  
this map restricts to a $\GG_a$-equivariant isomorphism
\[ \phi : H_V(\xi_1) \ra H_V(\xi_2)\]
and induces an isomorphism of quotients 
\[\Phi :H_V(\xi_1)/\GG_a \ra H_V(\xi_2)/\GG_a.\]
The pullback of the Liouville form $\omega$ on $T^*V$ via $\phi : T^*V \ra T^*V$ satisfies $\phi^*\omega = C \omega$. Let $\pi_{\xi}: H_v(\xi) \ra H_V(\xi)/\GG_a$ be the quotient map and 
$i_{\xi} :H_V(\xi) \hookrightarrow T^*V$ be the inclusion; then
\[ \pi_{\xi_1}^*{\Phi}^*(\omega_{\xi_2})= \phi^* \pi_{\xi_2}^* (\omega_{\xi_2}) = \phi^* i_{\xi_2}^* (\omega) = i^*_{\xi_1} \phi^*(\omega)=i_{\xi_1}^*(C\omega) = C \pi^*_{\xi_1} (\omega_\xi), \]
and, as $\pi_{\xi_1}$ is surjective, it follows that ${\Phi}^* (\omega_{\xi_2}) = C\omega_{\xi_1}$.
\end{proof}

Following the proof of this result, we are able to describe the central fibre of this 1-parameter family.

\begin{cor}\label{cor central fibre}
The central fibre of the 1-parameter family $\mathcal{X}=H_V(\CC^*)/\GG_a$ constructed above from the algebraic symplectic reductions of the $\GG_a$-action on $T^*V$ at non-zero values is the zero fibre of the morphism $\mu_{\GG_a} : T^*V/\!/\GG_a \ra \CC$. More precisely, there is a $\GG_a$-invariant morphism 
\[ H_V(0) \ra \mathcal{X}_0\cong \spec \CC[T^*V]^{\GG_a}/(\mu_{\GG_a})\]
constructed using the extrinsic $\GG_a$-invariant functions on the ambient space $T^*V$.
\end{cor}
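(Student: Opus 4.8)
The plan is to recognise the total space $\mathcal{X}$ of the family as a principal open subscheme of the enveloping quotient $T^*V/\!/\GG_a$, to identify the natural extension of the family over all of $\CC$, to read off the scheme-theoretic fibre over $0$, and then to exhibit the extrinsic quotient morphism out of $H_V(0)$.

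\emph{Step 1: locate $\mathcal{X}$ inside $T^*V/\!/\GG_a$.} The level set $H_V(\CC^*) = \mu_{\GG_a}^{-1}(\CC^*)$ is exactly the non-vanishing locus of $\mu_{\GG_a}$ in $T^*V$, so $\CC[H_V(\CC^*)] = \CC[T^*V]_{\mu_{\GG_a}}$. Because $\CC[T^*V]$ is a domain and $\mu_{\GG_a}$ is a nonzero $\GG_a$-invariant, a fraction $a/\mu_{\GG_a}^{n}$ is $\GG_a$-invariant only if $a$ is, so localisation at $\mu_{\GG_a}$ commutes with taking $\GG_a$-invariants and
\[ \CC[\mathcal{X}] = \CC[H_V(\CC^*)]^{\GG_a} = \bigl(\CC[T^*V]^{\GG_a}\bigr)_{\mu_{\GG_a}}. \]
Hence $\mathcal{X}$ is the principal open subscheme of $T^*V/\!/\GG_a = \spec \CC[T^*V]^{\GG_a}$ on which $\mu_{\GG_a}$ is invertible, and by the proof of Theorem \ref{1-param family red} its structure map $\mathcal{X} \ra \CC^*$ is the restriction of the morphism $\mu_{\GG_a} : T^*V/\!/\GG_a \ra \CC$ determined by the global invariant function $\mu_{\GG_a}$. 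In other words $\mathcal{X} \ra \CC^*$ is the base change of $\mu_{\GG_a} : T^*V/\!/\GG_a \ra \CC$ along $\CC^* \hookrightarrow \CC$; this is the natural extension of the family over $\CC$ (and it is flat, since $\CC[T^*V]^{\GG_a}$ is a domain, hence torsion-free over the polynomial subring $\CC[\mu_{\GG_a}]$). By the very definition of the scheme-theoretic fibre, the fibre over $0$ is $\mathcal{X}_0 := \spec \CC[T^*V]^{\GG_a}/(\mu_{\GG_a})$.

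\emph{Step 2: the extrinsic morphism $H_V(0) \ra \mathcal{X}_0$.} The zero level set $H_V(0) = \mu_{\GG_a}^{-1}(0)$ is a closed $\GG_a$-invariant subscheme of $T^*V$, so restriction of functions is a surjection $\CC[T^*V] \twoheadrightarrow \CC[H_V(0)]$. Composing with $\CC[T^*V]^{\GG_a} \hookrightarrow \CC[T^*V]$ gives a ring homomorphism $\CC[T^*V]^{\GG_a} \ra \CC[H_V(0)]$ whose image lies in $\CC[H_V(0)]^{\GG_a}$ and which sends $\mu_{\GG_a}$ to $0$, since $\mu_{\GG_a}$ vanishes identically on its own zero level set. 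It therefore factors through $\CC[T^*V]^{\GG_a}/(\mu_{\GG_a})$, and taking $\spec$ produces the required morphism $H_V(0) \ra \mathcal{X}_0$, which is $\GG_a$-invariant precisely because it is built from the ambient $\GG_a$-invariant functions on $T^*V$.

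\emph{The main point to flag.} Nothing here is hard; the content is essentially bookkeeping, and the one place to be careful is the identification of $\mathcal{X}$ as a principal open of the affinization (which uses that $\CC[T^*V]$ is a domain) together with the observation that the result is only a statement about the \emph{extrinsic} quotient. Indeed, since taking $\GG_a$-invariants is not exact, the map $\CC[T^*V]^{\GG_a}/(\mu_{\GG_a}) \ra \CC[H_V(0)]^{\GG_a}$ need not be surjective, so $H_V(0) \ra \mathcal{X}_0$ is in general neither surjective nor a categorical quotient of the $\GG_a$-action on $H_V(0)$; moreover no reducedness, normality or finite generation is claimed for $\mathcal{X}_0$ or for $\CC[H_V(0)]^{\GG_a}$. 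By Proposition \ref{prop hypersurfaces}, $H_V(0)$ is itself singular along the $\GG_a$-fixed locus, which is why the honest additive symplectic analogue will be built instead from a stable open subset, as in Proposition \ref{nonred alg symp analog}. The corollary records only the clean statement about the extrinsic quotient.
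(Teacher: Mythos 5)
Your argument is correct and follows the same route the paper intends: the corollary is stated without a separate proof as an immediate consequence of Theorem \ref{1-param family red}, whose proof already records that the structure map $\mathcal{X}\ra\CC^*$ is given by the invariant function $\mu_{\GG_a}$ and that all invariants on $H_V(\xi)$, $\xi\neq 0$, extend to $T^*V$ (Theorem \ref{thm fg nonzero}(iv)), which is exactly your identification $\CC[\mathcal{X}]=(\CC[T^*V]^{\GG_a})_{\mu_{\GG_a}}$ followed by reading off the fibre over $0$ and restricting ambient invariants to $H_V(0)$. Your closing remarks on non-surjectivity of $\CC[T^*V]^{\GG_a}/(\mu_{\GG_a})\ra\CC[H_V(0)]^{\GG_a}$ match the paper's own caveat immediately after the corollary.
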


At this point, we should emphasise that in general, the hypersurface $H_V(0) \subset T^*V$ admits $\GG_a$-invariant functions which do not extend to $T^*V$ and so there is a non-surjective homomorphism
\[ \CC[T^*V]^{\GG_a}/(\mu_{\GG_a}) \ra \CC[H_V(0)]^{\GG_a}\]
and, if the latter is finitely generated, then we have a morphism of affine varieties
\[ H_V(0)/\!/\GG_a := \spec  \CC[H_V(0)]^{\GG_a} \ra \mathcal{X}_0. \] 

\subsection{Constructing the algebraic symplectic analogue}

The non-reductive quotient $V/\!/\GG_a$ will often be singular, as not all points in $V$ 
are stable (for example, the origin has full stabiliser group, so is not stable). Therefore, we 
can only expect to provide an algebraic symplectic analogue of an open subset of this quotient. 

\begin{prop}\label{nonred alg symp analog}
Let $V$ be a $\GG_a$-representation. Then the open set $U:=V^{\overline{s}}/\GG_a \subset V/\!/ \GG_a$ 
has an algebraic symplectic analogue $U':=H_V(0)^{\overline{s}}/\GG_a$; that is, we have the following.
\begin{enumerate}[(i)]
\item There is a closed immersion $U \hookrightarrow U'$.
\item $U'$ is an algebraic symplectic manifold, with algebraic symplectic form $\omega'$ induced by the Liouville form $\omega$ on $T^*V$.
\item $U \subset U'$ is a Lagrangian subvariety.
\item $T^*U$ is an open subset of $U'$ which is dense if non-empty.
\item $U'$ is an open subset of the central fibre $\mathcal{X}_0$ of the 1-parameter family $\mathcal{X}$.
\end{enumerate}
\end{prop}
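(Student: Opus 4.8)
The plan is to build the algebraic symplectic analogue $U' := H_V(0)^{\overline{s}}/\GG_a$ by transporting, to the zero level set, the structural results already established for the non-zero level sets, and then to check the five listed properties one at a time, mostly by restriction of the corresponding reductive-theory facts in Section 2 together with the stability analysis of Section 3. First I would note that by Corollary \ref{cor stab hyp} the completely stable locus $H_V(0)^{\overline{s}}$ is contained in $H_V(0) - H_V(0)^{\GG_a}$, and by Proposition \ref{prop hypersurfaces}(3) the singular locus of $H_V(0)$ is exactly $(T^*V)^{\GG_a} = H_V(0)^{\GG_a}$; hence $H_V(0)^{\overline{s}}$ is a smooth, $\GG_a$-invariant open subvariety of $H_V(0)$ on which $\GG_a$ acts with trivial stabilisers. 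By the envelope theory recalled before Definition \ref{defn nonred ss} (and the commutative diagram from \cite{dorankirwan} Theorem 5.3.1), the restriction of the enveloping quotient to the completely stable locus is a Zariski-locally trivial geometric quotient, so $U' = H_V(0)^{\overline{s}}/\GG_a$ is a smooth variety and $H_V(0)^{\overline{s}} \to U'$ is a $\GG_a$-torsor.

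For (ii), I would invoke Lemma \ref{alg mmap def}: every point of $H_V(0)^{\overline{s}}$ is a smooth point of $\mu_{\GG_a}^{-1}(0)$ with finite (indeed trivial) stabiliser, so $\omega$ induces a canonical non-degenerate bilinear form on $T_p(H_V(0))/\Lie \GG_a$ at each such $p$; since $H_V(0)^{\overline{s}} \to U'$ is a geometric quotient by a free action, these forms glue to a non-degenerate algebraic $2$-form $\omega'$ on $U'$, which is closed because $\omega$ is closed. This is the exact analogue of the argument in Proposition \ref{alg mww}, with the reductive GIT quotient replaced by the geometric quotient on the completely stable locus. For (i), (iii), (iv) I would mimic the proof of Proposition \ref{Lagrangian prop} and Corollary \ref{cor alg symp anal open set}: the zero section $V \hookrightarrow T^*V$ lands in $\mu_{\GG_a}^{-1}(0)$, and I would check that it carries $V^{\overline{s}}$ into $H_V(0)^{\overline{s}}$ (this uses that the completely stable locus on $V$ is cut out, via Proposition \ref{nonred HM}, by the same non-negative-$T_d$-weight condition that appears in Lemma \ref{lemma comp stable}, so the zero-section inclusion is compatible with stability); passing to $\GG_a$-quotients gives the closed immersion $U \hookrightarrow U'$. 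Since $V \subset T^*V$ is Lagrangian and $\omega'$ is induced by $\omega$, the image $U$ is isotropic of half dimension, hence Lagrangian; and the standard local computation identifying a neighbourhood of the zero section of a symplectic reduction with $T^*$ of the reduced base shows $T^*U$ sits inside $U'$ as a dense open subset when non-empty.

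Finally, for (v), I would use Corollary \ref{cor central fibre}: the central fibre $\mathcal{X}_0 = \spec \CC[T^*V]^{\GG_a}/(\mu_{\GG_a})$ receives a $\GG_a$-invariant morphism from $H_V(0)$ built from the ambient invariants, and on the completely stable locus this morphism is, by the envelope diagram, a geometric quotient onto its image; restricting to $H_V(0)^{\overline{s}}$ identifies $U'$ with an open subscheme of $\mathcal{X}_0$ (the locus where the ambient invariants already separate $\GG_a$-orbits, which is exactly the completely stable image). I expect the main obstacle to be (v) together with the compatibility needed in (iv): one must be careful that the ambient $\GG_a$-invariants on $T^*V$ genuinely give a locally closed embedding of $U'$ into $\mathcal{X}_0$ even though $\CC[H_V(0)]^{\GG_a}$ may be strictly larger than $\CC[T^*V]^{\GG_a}/(\mu_{\GG_a})$ — i.e. that the non-surjectivity of $\CC[T^*V]^{\GG_a}/(\mu_{\GG_a}) \to \CC[H_V(0)]^{\GG_a}$ flagged after Corollary \ref{cor central fibre} does not obstruct the open-immersion claim on the completely stable part. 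The resolution is that on $H_V(0)^{\overline{s}}$ the enveloping quotient is already a geometric quotient, so the ambient invariants suffice there, and any extra intrinsic invariants only matter on the complement; making this precise, and checking that $T^*U$ really is the dense open piece of this open subset of $\mathcal{X}_0$, is where the real work lies, the rest being restriction of Section 2.
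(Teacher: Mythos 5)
Your proposal is correct and follows essentially the same route as the paper: smoothness of $U'$ via Corollary \ref{cor stab hyp} and Proposition \ref{prop hypersurfaces} together with the Zariski-locally trivial geometric quotient on the completely stable locus, the symplectic form via the algebraic Marsden--Weinstein--Meyer argument of Lemma \ref{alg mmap def}, parts (i), (iii), (iv) by the compatibility $V \cap H_V(0)^{\overline{s}} = V^{\overline{s}}$ and the argument of Proposition \ref{Lagrangian prop}, and part (v) from the fact that the completely stable locus is cut out by the extrinsic invariants so that $H_V(0)^{\overline{s}} \to U'$ is the restriction of $H_V(0) \to \mathcal{X}_0$. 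The subtlety you flag about intrinsic invariants on $H_V(0)$ is resolved exactly as you say, and the paper dispatches it in one sentence.
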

\begin{proof}
As $V^{\overline{s}} \subset V$ is open and the quotient $V^{\overline{s}} \ra U:= V^{\overline{s}}/\GG_a$ 
is a Zariski locally trivial geometric quotient, $U \subset V/\!/\GG_a$ is a smooth open subvariety. The hypersurface $H_V(0)$ is 
singular, but the completely stable locus avoids the singularities:
\[ {H_V}(0)^{\overline{s}} \subset H_V(0) - H_V(0)^{\GG_a}=H_V(0)_{\text{reg}}\]
by Proposition \ref{prop hypersurfaces} and Corollary \ref{cor stab hyp}. Hence, as 
${H_V}(0)^{\overline{s}} \ra {H_V}(0)^{\overline{s}}/\GG_a=:U'$ is a Zariski locally trivial geometric quotient, 
$U'$ is also smooth. Since the completely stable locus is determined by the extrinsic invariants on $T^*V$, it follows that $V \cap H_V(0)^{\overline{s}} =V^{\overline{s}}$ and we have a pullback square 
\[\xymatrix@1{ V^{\overline{s}}\ar[r] \ar[d] &  H_V(0)^{\overline{s}}  \ar[d]
 \\
V  \ar[r] &  H_V(0)},\]
such that both horizontal morphisms are closed immersions. It follows that the induced map on 
$\GG_a$-quotients $U \ra U'$ is also a closed immersion.

The algebraic symplectic structure on $U'$ is a consequence of the algebraic version of the 
Marsden-Weinstein-Meyer Theorem and the fact that $U \subset U'$ is a Lagrangian subvariety 
follows from the fact that $V \subset T^*V$ is a Lagrangian subvariety. The 
proof of the final statement follows exactly as in the proof of Proposition 
\ref{Lagrangian prop}.

For v), we recall that the central fibre $\mathcal{X}_0$ of the 1-parameter family $\mathcal{X}$ is 
the $\GG_a$-quotient of $H_V(0)$ obtained using only the $\GG_a$-invariant functions on $T^*V$. 
Since the completely stable locus $H_V(0)^{\overline{s}} $ is determined using invariant 
functions on $T^*V$, the Zariski locally trivial geometric quotient $H_V(0)^{\overline{s}} \ra U'$ is 
obtained by restricting $H_V(0) \ra \mathcal{X}_0$ and so, in particular, $U'$ is an open subset of 
$\mathcal{X}_0$.
\end{proof}

The completely stable set $H_V(0)^{\overline{s}}$ is determined using only the ambient $\GG_a$-invariants on the affine space $T^*V$. However, there may be additional intrinsic $\GG_a$-invariants on 
$H_V(0)$ which do not extend to invariants on $T^*V$, as the homomorphism
\[\CC[T^*V]^{\GG_a} \ra \CC[H_V(0)]^{\GG_a}\]
is not surjective in general. Since the $\GG_a$-action on $H_V(0)$ does not extend to $\SL_2$, we 
cannot conclude that $\CC[H_V(0)]^{\GG_a}$ is finitely generated (see $\S$\ref{sec fg} below). 

The additional intrinsic invariants can be used to construct a potentially larger open subset which admits a Zariski locally trivial geometric quotient, referred to as the \lq locally trivial stable set' in \cite{dorankirwan}. The $\GG_a$-quotient of this locally trivial stable set $H_V(0)^{lts}$ would also admit the structure of a smooth algebraic symplectic variety. 
However, as there is no Hilbert--Mumford style criteria to compute the locally 
trivial stable set, we would need to compute the invariant ring, which is impractical and thus 
justifies our use of the potentially smaller, but easily understandable, subset of completely stable points. Furthermore, the following lemma shows that these subsets can differ in at most a codimension 2 subset.

\begin{lemma}
The complement of $H_V(0)^{\overline{s}} \subset H_V(0)^{lts}$ has codimension $\geq 2$. 
\end{lemma}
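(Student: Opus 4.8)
The plan is to compare the two stable sets by first understanding their complements and then bounding the codimension of the difference. First I would recall the two relevant notions from \cite{dorankirwan}: the completely stable set $H_V(0)^{\overline{s}}$ is computed via the Hilbert--Mumford criterion applied to the $\SL_2$-action on the projective completion $\PP$, and by Proposition \ref{nonred HM} and Lemma \ref{lemma comp stable} its complement inside $T^*V$ is the \emph{linear} subspace cut out by vanishing of half the coordinates in each irreducible summand — equivalently the non-negative weight space for the diagonal torus $T_d \subset \SL_2$ acting on $T^*V$. The locally trivial stable set $H_V(0)^{lts}$, on the other hand, is the largest open subset on which $\GG_a$ acts with a Zariski locally trivial geometric quotient; it is determined by the full invariant ring $\CC[H_V(0)]^{\GG_a}$ rather than just the ambient invariants, so a priori $H_V(0)^{\overline{s}} \subseteq H_V(0)^{lts}$ with possibly strict inclusion.

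The key step is to observe that a point $p \in H_V(0)$ at which $\GG_a$ acts locally freely, but which lies outside $H_V(0)^{\overline{s}}$, must have its $\GG_a$-orbit \emph{not closed} in $H_V(0)$ in a controlled way: the obstruction to complete stability is exactly the existence of a limit point under a one-parameter subgroup (the diagonal torus of $\SL_2$), and that limit point lands in the $\GG_a$-fixed locus $H_V(0)^{\GG_a} = \sing H_V(0)$ by Proposition \ref{prop hypersurfaces}. So the complement $H_V(0)^{lts} \setminus H_V(0)^{\overline{s}}$ consists of points whose orbit closure meets the non-negative weight space $N$ for $T_d$; since $H_V(0)^{lts} \setminus H_V(0)^{\overline{s}}$ is $\GG_a$-invariant and constructible, it suffices to bound the dimension of the image of $N \cap H_V(0)$ under flowing out by $\GG_a$ (equivalently, of $\SL_2 \cdot (N \cap H_V(0))$). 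I would then compute: $N$ has codimension $\sum_j (k_j+1) = \tfrac{1}{2}\dim V$ roughly (precisely $\sum_j \lceil (k_j+1)/2\rceil + \sum_j \lceil k_j/2 \rceil$ in $T^*V$) inside $T^*V$, intersect with the quadric $H_V(0)$, and then add back the at most $2$ dimensions gained from the $\SL_2$-sweep (or $1$ from the $\GG_a$-sweep, since on $N$ the relevant unipotent flow can only increase dimension by the orbit dimension). Comparing this dimension count with $\dim H_V(0)$ shows the locally trivial stable but not completely stable locus has codimension at least $2$, provided the $\GG_a$-action is non-trivial so that at least one $k_j \geq 1$.

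The main obstacle I anticipate is making the inclusion ``$H_V(0)^{lts} \setminus H_V(0)^{\overline{s}} \subseteq \GG_a \cdot (N \cap H_V(0))$'' precise: it is not automatic that a point with a locally trivial quotient but without complete stability has its orbit closure meeting the specific linear space $N$, since complete stability is a statement about \emph{all} limits in the projective completion $\PP$, not just torus limits, and one must rule out that the failure of complete stability comes entirely from the boundary at infinity of $\PP$ rather than from a degeneration within $H_V(0)$ itself. To handle this I would use the fact, recorded after Definition \ref{defn nonred ss}, that $(\PP, \cO_\PP(1))$ is a fine reductive envelope and that for such envelopes the difference between the locally trivial stable set and the completely stable set is contained in the locus where the enveloping quotient map fails to separate orbits — and then invoke the explicit Hilbert--Mumford computation of Proposition \ref{nonred HM}, which shows the only destabilising one-parameter subgroups are conjugates of $T_d$, so that failure of complete stability for a point of $H_V(0)$ on which $\GG_a$ acts freely is detected precisely by a $T_d$-limit lying in the fixed locus. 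Once that reduction is in place, the remaining dimension bookkeeping is the routine calculation sketched above, mirroring the codimension computations already carried out in the proofs of Lemma \ref{lem hypersurfaces} and Proposition \ref{prop hypersurfaces}.
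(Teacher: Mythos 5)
There is a genuine gap. Your dimension count cannot close the argument in the low-dimensional cases, and your proposal has no mechanism for them. By Lemma \ref{lemma comp stable} and Corollary \ref{cor stab hyp}, the complement $H_V(0)\setminus H_V(0)^{\overline{s}}$ is a linear subspace of codimension $2\sum_{j}\lceil k_j/2\rceil$ in $T^*V$, hence of codimension $2\sum_j\lceil k_j/2\rceil-1$ in the hypersurface $H_V(0)$. This is $\geq 2$ only when $\sum_j\lceil k_j/2\rceil\geq 2$; for $V=\sym^1(\CC^2)$ or $V=\sym^2(\CC^2)$ (possibly plus trivial summands) the complement has codimension exactly $1$, so no dimension count — yours or the paper's — can yield the lemma there. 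Your closing claim that non-triviality of the action (some $k_j\geq 1$) suffices is therefore false. The paper disposes of these two exceptional cases by showing $H_V(0)^{\overline{s}}=H_V(0)^{lts}$ directly: for $\sym^1$ the complement of the completely stable locus is the $\GG_a$-fixed locus (Proposition \ref{prop sym1}), which cannot meet any locally trivial stable set, and for $\sym^2$ one checks via the explicit invariant-ring computation of $\S$\ref{sec sym2} that the intrinsic invariants give no extra separation. Your proposal contains no substitute for either argument.

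A secondary issue: the ``main obstacle'' you anticipate is not actually there, and your workaround makes the bound worse. The complement of the completely stable set is, by definition and by Proposition \ref{nonred HM}, literally the non-negative weight space $N$ (already $\GG_a$-invariant and contained in $H_V(0)$); the containment $H_V(0)^{lts}\setminus H_V(0)^{\overline{s}}\subseteq H_V(0)\setminus H_V(0)^{\overline{s}}=N$ is immediate, with no need to discuss orbit closures, $T_d$-limits, or the boundary of $\PP$. Sweeping $N$ by $\GG_a$ or $\SL_2$ and ``adding back'' $1$ or $2$ dimensions only enlarges the exceptional set (with the $\SL_2$-sweep you would additionally fail for $\sym^3$, $\sym^4$, $\sym^1\oplus\sym^1$, etc.). The correct first step is simply the trivial inclusion above followed by the codimension computation — which is exactly the paper's route — plus the separate treatment of the two genuinely exceptional representations.
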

\begin{proof}
The codimension of $H_V(0)^{\overline{s}} \subset H_V(0)^{lts}$ is greater than or equal to the 
codimension of $H_V(0)^{\overline{s}} \subset H_V(0)$, which we can explicitly compute using Lemma \ref{lemma comp stable}. If we write $V$ as a sum of irreducible $\SL_2$-representations $V =   \sym^{k_1}(\CC^2) \oplus \cdots \oplus \sym^{k_m}(\CC^2)$, then the codimension  of $H_V(0)^{\overline{s}} \subset H_V(0)$ is
\[ 2 \left(\sum_{j=1}^m \lceil {k_j}/{2} \rceil \right) -1,\]
which is greater than or equal to 2, unless $V = \sym^1(\CC^2)$ or $V=\sym^2(\CC^2)$. If $V = \sym^1(\CC^2)$, then $H_V(0)$ is 3 dimensional and $H_V(0)^{\overline{s}} = H_V(0)^{lts}$, as the complement to the completely stable locus is the $\GG_a$-fixed locus (see Proposition \ref{prop sym1} below). 
If $V = \sym^2(\CC^2)$, then we also have $H_V(0)^{\overline{s}} = H_V(0)^{lts}$ (essentially, as on 
the complement of the completely stable locus the action is not separated, see $\S$\ref{sec sym2}).
\end{proof}

\subsection{Finite generation of the invariant ring of $H_V(0)$}\label{sec fg}

In the simplest example $V = \sym^1(\CC^2)$, the ring  $\CC[H_V(0)]^{\GG_a}$ is non-finitely generated.

\begin{ex}\label{ex non fg}
Consider the 2-dimensional $\GG_a$-representation $V = \sym^1(\CC^2)$. 
If we take coordinates $x_1,x_2$ on $V$ 
and $\alpha_1,\alpha_2$ on $V^*$, then the action of $s \in \GG_a$ on $T^*V \cong V \times V^*$ is
\[s \cdot (x_1,x_2,\alpha_1,\alpha_2) = (x_1 +s x_2,x_2,\alpha_1,\alpha_2 - s\alpha_1).\] 
By Example \ref{ex irred reps}, the moment map $\mu_{\GG_a} : T^*V \ra \CC$ is given by 
$\mu_{\GG_a}(x,\alpha) = x_2 \alpha_1$. The ring of $\GG_a$-invariant functions on 
the cotangent space is finitely generated 
\[ \CC[T^*V]^{\GG_a}= \CC[x_2,\alpha_1,x_1\alpha_1 + x_2 \alpha_2],\]
but the ring of $\GG_a$-invariant functions on the hypersurface $H_V(0):=\mu_{\GG_a}^{-1}(0)$ is not: 
for $n \geq 0$, 
\[ x_1^n \alpha_1 \quad \text{and} \quad x_2\alpha_2^n \]
are $\GG_a$-invariant functions on $\mu_{\GG_a}^{-1}(0)$, but 
the functions $x_1$ and $\alpha_2$ are not $\GG_a$-invariant. 
We see that taking $\GG_a$-invariants is not exact: there are $\GG_a$-invariant 
functions $x_1^n \alpha_1$ and $x_2\alpha_2^n$ on $\mu_{\GG_a}^{-1}(0)$ which do not extend to 
$\GG_a$-invariant functions on the ambient affine space $T^*V$.
\end{ex}

There is an immediate extension of this example: we can add on copies of the trivial representation.

\begin{cor}\label{sym1 non fg}
For the $\GG_a$-representation $V = \sym^1(\CC^2) \oplus \sym^0(\CC^2)^{\oplus n}$, the ring 
of $\GG_a$-invariant functions on $H_V(0)=\mu_{\GG_a}^{-1}(0)$ is non-finitely generated. 
\end{cor}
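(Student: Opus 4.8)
The plan is to reduce the case $V = \sym^1(\CC^2) \oplus \sym^0(\CC^2)^{\oplus n}$ to the already-established non-finite-generation statement for $V_0 := \sym^1(\CC^2)$ from Example \ref{ex non fg}. Write $V = V_0 \oplus T$ where $T \cong \CC^n$ carries the trivial $\GG_a$-action, so that $T^*V = T^*V_0 \times T^*T$, and by Remark \ref{rmk cx mmap irred} the moment map splits as $\mu_{\GG_a,V} = \mu_{\GG_a,V_0} \circ \mathrm{pr}_1$, since the trivial summand contributes nothing. Consequently $H_V(0) = H_{V_0}(0) \times T^*T$ as $\GG_a$-varieties, with $\GG_a$ acting trivially on the second factor. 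The coordinate ring therefore factors as
\[ \CC[H_V(0)]^{\GG_a} \cong \CC[H_{V_0}(0)]^{\GG_a} \otimes_{\CC} \CC[T^*T]. \]

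First I would make this product decomposition precise, checking that the $\GG_a$-action on $H_V(0)$ is the product of the action on $H_{V_0}(0)$ with the trivial action, and hence that the invariant ring is the tensor product of $\CC[H_{V_0}(0)]^{\GG_a}$ with the polynomial ring $\CC[T^*T] = \CC[\AA^{2n}]$. Then I would invoke the standard fact that if $A$ is a non-finitely-generated $\CC$-algebra which is a domain (or more simply, an arbitrary $\CC$-algebra) and $B$ is any nonzero $\CC$-algebra, then $A \otimes_\CC B$ is non-finitely generated: if $A \otimes_\CC B$ were finitely generated, then so would be its quotient $A \otimes_\CC B / (A \otimes_\CC \mathfrak{m}) \cong A$ for a maximal ideal $\mathfrak{m} \subset B$ with $B/\mathfrak{m} = \CC$ (such $\mathfrak{m}$ exists since $B = \CC[\AA^{2n}]$), contradicting Example \ref{ex non fg}. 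Alternatively, one can simply exhibit the explicit infinite family: the functions $x_1^k \alpha_1$ (for $k \geq 0$) remain $\GG_a$-invariant on $H_V(0)$ and no finite subset of $\CC[H_V(0)]^{\GG_a}$ generates them all, by the same degree/leading-term argument as in Example \ref{ex non fg} applied in the $V_0$-variables (the extra coordinates from $T^*T$ are $\GG_a$-invariant but involve different variables, so they cannot help build the $x_1^k\alpha_1$).

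I do not expect a serious obstacle here; the only point requiring a little care is the tensor-product lemma on non-finite generation, which is elementary but worth stating cleanly — the cleanest route is the quotient-by-a-maximal-ideal argument above, since $\CC$ is algebraically closed so closed points of $\spec \CC[T^*T]$ furnish the required maximal ideals with residue field $\CC$. I would present the proof in essentially two lines: identify $H_V(0) \cong H_{\sym^1(\CC^2)}(0) \times \AA^{2n}$ equivariantly, and then cite the tensor-product reduction together with Example \ref{ex non fg}.
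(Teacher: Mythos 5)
Your proposal is correct and matches the paper's (entirely implicit) argument: the paper offers no proof beyond the sentence ``we can add on copies of the trivial representation,'' and your identification $H_V(0)\cong H_{\sym^1(\CC^2)}(0)\times \AA^{2n}$ with trivial $\GG_a$-action on the second factor, followed by $\CC[H_V(0)]^{\GG_a}\cong \CC[H_{\sym^1(\CC^2)}(0)]^{\GG_a}\otimes_\CC\CC[\AA^{2n}]$ and the quotient-by-a-maximal-ideal reduction, is exactly the intended formalization. Both the tensor-product identification of invariants and the non-finite-generation transfer are sound, so no further work is needed beyond what Example \ref{ex non fg} already supplies.
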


However, this seemingly negative result is not as discouraging as one may think: by Proposition \ref{prop hypersurfaces}, 
this representation $V$ is the only case for which the 
hypersurfaces $H_V(0)$ are not normal. If we consider the induced $\GG_a$-action on the 
normalisation of $H_V(0)$ for $V = \sym^1(\CC^2) \oplus \sym^0(\CC^2)^{\oplus n}$, 
then we see that the ring of $\GG_a$-invariants is finitely generated.

\begin{lemma}\label{lemma fg norm}
Let $V = \sym^1(\CC^2) \oplus \sym^0(\CC^2)^{\oplus n}$ and $ \widetilde{H_V}(0) \ra {H_V(0)} $ be the 
normalisation of the zero level set of the moment map. 
Then $\CC[\widetilde{H_V}(0)]^{\GG_a}$ is finitely generated.
\end{lemma}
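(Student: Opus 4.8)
The plan is to exploit the very simple form of $H_V(0)$ in this case. After extending to $\SL_2$ we have $V = \sym^1(\CC^2) \oplus \sym^0(\CC^2)^{\oplus n}$, and since the $\GG_a$-action on the trivial summands is trivial, Remark~\ref{rmk cx mmap irred} and Example~\ref{ex irred reps} give $\mu_{\GG_a}(x,\alpha,y,\beta) = x_2\alpha_1$ on $T^*V \cong T^*(\sym^1(\CC^2)) \times T^*(\AA^n)$, where $x_1,x_2,\alpha_1,\alpha_2$ are coordinates on the first factor and $y_1,\ldots,y_n,\beta_1,\ldots,\beta_n$ (collectively $y_\bullet,\beta_\bullet$) on the second, on which $\GG_a$ acts trivially. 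Hence
\[ H_V(0) = \bigl( \{ x_2\alpha_1 = 0 \} \subset \AA^4 \bigr) \times \AA^{2n}, \]
where $\{x_2\alpha_1=0\}$ is the union of the two coordinate hyperplanes $Z_1 = \{x_2=0\}$ and $Z_2 = \{\alpha_1=0\}$, each isomorphic to $\AA^3$, meeting in the codimension-one locus $\{x_2=\alpha_1=0\}$ (compare Lemma~\ref{lem hypersurfaces}(1)).

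First I would compute the normalisation. The threefold $\{x_2\alpha_1=0\}\subset\AA^4$ is, up to the trivial $\AA^2$-factor in the $x_1,\alpha_2$ directions, the union of the two coordinate axes in $\AA^2$, whose normalisation is the disjoint union of those axes; and normalisation is unaffected by the smooth product factor $\AA^{2n}$. Therefore
\[ \widetilde{H_V}(0) \;\cong\; Z_1' \sqcup Z_2', \qquad Z_i':=Z_i\times\AA^{2n}\cong\AA^{2n+3}; \]
equivalently, the integral closure of $\CC[H_V(0)]$ in its total ring of fractions is the product $\CC[Z_1']\times\CC[Z_2']$ of the (normal) coordinate rings of the two irreducible components of $H_V(0)$.

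Next I would transport the $\GG_a$-action. Since $\GG_a$ is connected it preserves each irreducible component of $H_V(0)$ setwise --- one checks directly from $s\cdot(x_1,x_2,\alpha_1,\alpha_2)=(x_1+sx_2,x_2,\alpha_1,\alpha_2-s\alpha_1)$ that $Z_1$ and $Z_2$ are invariant --- so by the universal property of normalisation the action lifts uniquely to $\widetilde{H_V}(0)$, preserving $Z_1'$ and $Z_2'$. On $Z_1'\cong\AA^{2n+3}$, in coordinates $x_1,\alpha_1,\alpha_2,y_\bullet,\beta_\bullet$, the induced action is $s\cdot(x_1,\alpha_1,\alpha_2)=(x_1,\alpha_1,\alpha_2-s\alpha_1)$ and trivial on $y_\bullet,\beta_\bullet$; on $Z_2'\cong\AA^{2n+3}$, in coordinates $x_1,x_2,\alpha_2,y_\bullet,\beta_\bullet$, it is $s\cdot(x_1,x_2,\alpha_2)=(x_1+sx_2,x_2,\alpha_2)$ and trivial on $y_\bullet,\beta_\bullet$. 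In both cases this is a \emph{linear} $\GG_a$-representation.

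Finally, as $\GG_a$ is connected, $\CC[\widetilde{H_V}(0)]^{\GG_a} = \CC[Z_1']^{\GG_a}\times\CC[Z_2']^{\GG_a}$; each factor is the ring of invariants of a linear $\GG_a$-action on an affine space, hence finitely generated by Weitzenb\"{o}ck's Theorem \cite{weitzenbock}, and a finite product of finitely generated $\CC$-algebras is finitely generated. I do not expect a genuine obstacle here: the only steps needing care are the identification of $\widetilde{H_V}(0)$ with the disjoint union $Z_1'\sqcup Z_2'$ and the observation that the two lifted $\GG_a$-actions are linear, after which Weitzenb\"{o}ck's Theorem finishes the argument.
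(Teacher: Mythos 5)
Your proof is correct and follows essentially the same route as the paper: identify the normalisation of $H_V(0)$ with the disjoint union of its two irreducible components (each already a normal affine space of dimension $2n+3$), observe that the induced $\GG_a$-action on each component is linear, and conclude by Weitzenb\"{o}ck's Theorem. The only difference is that you spell out the explicit coordinates and the lifted actions, which the paper leaves implicit.
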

\begin{proof}
As the non-normal hypersurface $H_V(0)$ has two irreducible components, its normalisation 
is the disjoint union of the normalisation of each of these irreducible components. 
However, the irreducible components are both affine spaces of dimension $3 + 2n$ and 
so are already normal. As the $\GG_a$-action on both these affine spaces is linear, the 
ring of $\GG_a$-invariants on the normalisation is finitely generated.
\end{proof}

The next smallest example is given by $V = \sym^2(\CC^2)$, which we study in $\S$\ref{sec sym2} 
and show that the ring of $\GG_a$-invariants on $H_V(0)$ is finitely generated. If $\CC[H_V(0)]^{\GG_a}$ is finitely generated, then we can define
\[ H_V(0)/\!/\GG_a := \spec \CC[H_V(0)]^{\GG_a}, \]
which contains $H_V(0)^{\overline{s}}/\GG_a$ as an open subset. In this case, as the composition
\[ V \hookrightarrow H_V(0) \hookrightarrow T^*V \twoheadrightarrow V\]
is the identity, we have a commutative diagram 
\[\xymatrix@1{ \CC[H_V(0)]^{\GG_a}\ar[r]^{\quad \phi} &  \CC[V]^{\GG_a}  
 \\
\CC[T^*V]^{\GG_a} \ar[u] & \ar[l]_{}  \CC[V]^{\GG_a} \ar[u]_{\text{Id}}}.\]
Hence $\phi$ is surjective and corresponds to a closed immersion $V/\!/\GG_a \ra H_V(0)/\!/\GG_a$.

As we can only expect to have an algebraic symplectic structure on an open subset 
of the $\GG_a$-quotient of $H_V(0)$, the question of whether $\CC[H_V(0)]^{\GG_a}$ 
is finitely generated is not particularly pertinent. 
However, we note that the zero level set of the moment map $H_V(0)$ is the only level set 
which is preserved by the action of the upper triangular Borel $B$ in $\SL_2$. 
For representations $V$ which are not equal to a sum of trivial representations 
with one copy of $\sym^1(\CC^2)$, the hypersurface $H_V(0)$ is normal and 
irreducible.  It would be interesting to study the ring of $B$-invariants on 
$H_V(0)$ from the point of view of \cite{BK}, which in particular gives a geometrically useful finitely generated subring that should still have the desired separation properties. 

\section{Comparisons of the holomorphic symplectic quotients}

Let $\GG_a$ act linearly on an affine space $V= \AA^n$ over the complex numbers; then
\[ V/\!/\GG_a \cong W /\!/\SL_2 \]
where $W :=V \times \AA^2$. In the previous section, we studied algebraic symplectic reductions of the $\GG_a$-action on $T^*V$. We can alternatively consider algebraic symplectic reductions of the $\SL_2$-action on $T^*W$. As $\SL_2$ is semisimple, it has trivial character group and, as only $0 \in \fs \fl_2^*$ is coadjoint fixed, there is only one algebraic symplectic reduction
\[ \mu_{\SL_2}^{-1}(0)/\!/\SL_2\]
where $\mu_{\SL_2} : T^*W \ra \fs \fl_2^* $ denotes the algebraic moment map for the $\SL_2$-action on $T^*W$. Although $\GG_a$ also has trivial character group, every element in its co-Lie algebra is coadjoint fixed and we have a 1-parameter family of algebraic symplectic reductions by Theorem \ref{1-param family red}. In many ways, this reflects the fact that the representation theory of the reductive group $\SL_2$ is discrete, whereas the representation theory for the non-reductive group $\GG_a$ it is not. 

In this section, we compare the non-reductive algebraic symplectic 
quotients $\mu_{\GG_a}^{-1}(\xi)/\!/\GG_a$ with the reductive algebraic 
symplectic quotient $\mu_{\SL_2}^{-1}(0)/\!/\SL_2$. We continue to assume the $\GG_a$-representation $V$ is non-trivial.

\subsection{Equivariant morphisms between the level sets}

In order to relate the non-reductive holomorphic symplectic quotients with the 
reductive holomorphic symplectic quotient, we want to construct a $\GG_a$-equivariant 
morphism $\mu_{\GG_a}^{-1}(\xi) \ra \mu_{\SL_2}^{-1}(0)$. In this section, we assume 
that as an $\SL_2$-representation 
\[V \neq \sym^1 (\CC^2) \oplus \sym^0(\CC^2)^{\oplus n},\]
so that the hypersurface $H_V(0)$ is normal. The non-normal case will be studied 
separately in $\S$\ref{sec ex 2dim}.

We recall that $\mu_{\SL_2}^{-1}(0)$ is the closed subvariety of $T^*W$ defined by the equations
\begin{align*}
\Phi_H(x,\alpha) + u \lambda - v \eta & = 0 \\
\Phi_E(x,\alpha) + v \lambda & = 0  \\
\Phi_F(x,\alpha) + u\eta & = 0.
\end{align*}
where $\mu_{\SL_2,V}=\Phi_H H^* + \Phi_E E^* + \Phi_F F^* : T^*V \ra \fs \fl_2^*$ 
is the moment map for the $\SL_2$-action on $T^*V$ and $(x,\alpha)$ are 
coordinates on $T^*V$ and $(u,v,\lambda,\eta)$ are coordinates on $T^*\AA^2$.

\begin{rmk}
There are families of natural $\GG_a$-equivariant inclusions 
$V \hookrightarrow W = V \times \AA^2$ 
(resp. $V^* \hookrightarrow W^* = V^* \times (\AA^2)^*$ given by $x \mapsto (x,a,0)$ 
(resp. $\alpha \mapsto (\alpha,0,b)$) for constants $a,b \in \CC$, which  
give rise to inclusions $\iota_{a,b} : T^*V \hookrightarrow T^*W$. However, 
$\iota_{a,b}(\mu_{\GG_a}^{-1}(\xi)) \subset \mu_{\SL_2}^{-1}(0)$ if and only 
if $\Phi_H = \xi = 0$ and $\Phi_F = -ab$. As the functions $\Phi_H$ and $\Phi_F$ 
are not constant (cf. Example \ref{ex irred reps}), these natural families of 
$\GG_a$-equivariant maps $\iota_{a,b}$ do not map one level set to the other. 
\end{rmk} 

Fortunately, there are $\GG_a$-equivariant inclusions $\mu_{\GG_a}^{-1}(\xi) \ra \mu_{\SL_2}^{-1}(0)$ 
when $\xi = 0$. 

\begin{prop}\label{prop equiv map level sets}
Let $f : T^*V \ra T^*W$ be a morphism that extends the identity map on $T^*V$. 
Then $f$  restricts to a $\GG_a$-equivariant morphism 
$f: \mu_{\GG_a}^{-1}(\xi) \ra \mu_{\SL_2}^{-1}(0)$ only if $\xi = 0$. 
For $\xi =0$, the $\GG_a$-equivariant morphisms 
$f: \mu_{\GG_a}^{-1}(0) \ra \mu_{\SL_2}^{-1}(0)$ 
are given by two 1-parameter families of morphisms $\{i_a\}_{a \neq 0}$ and $\{j_b\}_{b \neq 0}$, 
where, for $(x,\alpha)\in \mu_{\GG_a}^{-1}(0)$,
\[ i_a (x,\alpha) := \left(x,(a,0),\alpha, 
-\tfrac{\Phi_H(x,\alpha)}{a},-\tfrac{\Phi_F(x,\alpha)}{a} \right) \]
and
\[ j_b(x,\alpha) := \left(x,
-\tfrac{\Phi_F(x,\alpha)}{b},\tfrac{\Phi_H(x,\alpha)}{b},\alpha,(0,b)\right).\]
\end{prop}
\begin{proof}
Such an extension $f$ is given by functions $f_u,f_v,f_\lambda, f_\eta\in \CC[T^*V]$. 
If $f(\mu_{\GG_a}^{-1}(\xi)) \subset \mu_{\SL_2}^{-1}(0)$, then, on $\mu_{\GG_a}^{-1}(\xi)$, we have
\begin{align*}
\Phi_H + f_u f_\lambda -f_v f_\eta =\xi + f_v f_\lambda = \Phi_F + f_u f_\eta = 0.
\end{align*}
If moreover $f :\mu_{\GG_a}^{-1}(\xi) \ra \mu_{\SL_2}^{-1}(0)$ is $\GG_a$-equivariant, then 
$f_v, f_\lambda \in \CC[\mu_{\GG_a}^{-1}(\xi)]^{\GG_a}$ and $f_u,f_\eta$ satisfy
\begin{align*}
f_u(c \cdot(x,\alpha))=& f_u(x,\alpha) + cf_v(x,\alpha)\\
f_\eta(c \cdot(x,\alpha))=& f_\eta(x,\alpha) - cf_\lambda(x,\alpha)
\end{align*}
for $c \in \GG_a$ and $(x,\alpha) \in \mu_{\GG_a}^{-1}(0)$.

First suppose that $\xi \neq 0$. Since $\mu_{\GG_a} =\xi = -f_vf_\lambda$ on 
$\mu_{\GG_a}^{-1}(\xi)$ and the equation $\mu_{\GG_a}$ is irreducible (here 
we use the assumption that $V \neq \sym^1 (\CC^2) \oplus \sym^0(\CC^2)^{\oplus n}$; 
see also Example \ref{ex irred reps}), it follows that both $f_v$ and $f_\lambda$ are 
non-zero constant functions. However, in this case, there are no functions $f_u$ 
and $f_\eta$ which transform under $\GG_a$ as above. 

Hence let $\xi =0$. As $f_vf_\lambda = 0$ on the irreducible hypersurface 
$\mu_{\GG_a}^{-1}(0)$, it follows that either $f_v = 0$ or $f_\lambda = 0$ on 
$\mu_{\GG_a}^{-1}(0)$. We will show that the first case gives rise to the family 
$i_a$; then, similarly, the second case gives rise to the family $j_b$. If $f_v = 0$, 
it follows that $f_u \in \CC[\mu_{\GG_a}^{-1}(0)]^{\GG_a}$. Since $\Phi_H$ is 
irreducible but not $\GG_a$-invariant (cf.  Example \ref{ex irred reps}) and 
$\Phi_H + f_u f_\lambda = 0$ on $\mu_{\GG_a}^{-1}(0)$, we conclude that $f_u =a$ 
for some non-zero constant $a$. It then follows that $f=i_a$.

The $\GG_a$-equivariance of $i_a$ and $j_b$ on $\mu^{-1}_{\GG_a}(0)$ can be verified 
using the following transformation properties of $\Phi_E,\Phi_F$ and $\Phi_H$:
\[ \begin{array}{l} \Phi_H(c \cdot (x,\alpha) ) =  \Phi_H(x,\alpha) + 2c \Phi_E(x,\alpha)  \\ 
\Phi_F(c \cdot (x,\alpha)) =\Phi_F(x,\alpha)- c \Phi_H(x,\alpha)-c^2 \Phi_E(x,\alpha),\end{array}\]
for $c \in \GG_a$ (which follow from the $\SL_2$-equivariance of $\mu_{\SL_2,V} : T^*V \ra \fs \fl_2^*$).
\end{proof}

Henceforth, for notational simplicity, we consider only the morphism $i:=i_1$, and we note that the same results hold, with only minor modifications to the proofs, for $i_a$ and $j_b$.

\begin{lemma}\label{stab gp pres}
The morphism $i:\mu_{\GG_a}^{-1}(0) \ra \mu_{\SL_2}^{-1}(0)$ has the following properties.
\begin{enumerate}[(i)]
\item The stabiliser groups are preserved by $i$.
\item Let $p,p' \in \mu_{\GG_a}^{-1}(0)$; then $i(p)$ and $i(p')$ lie in the same $\SL_2$-orbit 
if and only if $p$ and $p'$ lie in the same $\GG_a$-orbit.
\item For $p \in \mu_{\GG_a}^{-1}(0)$, we have $\{g \in \SL_2 : g \cdot i(p) \in i(\mu_{\GG_a}^{-1}(0))\} = \GG_a$
\end{enumerate}
\end{lemma}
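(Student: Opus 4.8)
The plan is to prove the three statements by direct computation with the explicit formula for $i = i_1$, namely $i(x,\alpha) = (x,(1,0),\alpha,-\Phi_H(x,\alpha),-\Phi_F(x,\alpha))$, using the transformation rules for $\Phi_H$ and $\Phi_F$ recorded at the end of the proof of Proposition \ref{prop equiv map level sets}, together with the observation that on $\mu_{\GG_a}^{-1}(0)$ we have $\Phi_E = 0$, so in fact $\Phi_H(c\cdot(x,\alpha)) = \Phi_H(x,\alpha)$ and $\Phi_F(c\cdot(x,\alpha)) = \Phi_F(x,\alpha) - c\,\Phi_H(x,\alpha)$ for points of $\mu_{\GG_a}^{-1}(0)$.

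For (i): since $i$ is a $\GG_a$-equivariant morphism and is injective (it has a left inverse, the projection $T^*W \to T^*V$ extending the identity on $T^*V$), the stabiliser $(\GG_a)_p$ is contained in $(\SL_2)_{i(p)}$; conversely, $(\SL_2)_{i(p)}$ maps $i(p)$ to a point of $i(\mu_{\GG_a}^{-1}(0))$, hence by (iii) lies in $\GG_a$, and then equals $(\GG_a)_p$ by equivariance. So (i) will follow formally once (iii) is established; alternatively one can note directly that $\GG_a \hookrightarrow \SL_2$ acts on $\AA^2$ by $c\cdot(1,0) = (1,0)$ so the $\AA^2$-coordinate of $i(p)$ is $\GG_a$-fixed, and a short check in $\SL_2$ shows the only elements fixing $(1,0)\in\AA^2$ already lie in $\GG_a$. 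This last remark is really the heart of (iii) as well.

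For (iii): an element $g \in \SL_2$ sends $i(p) = (x,(1,0),\alpha,\dots)$ to a point whose $\AA^2$-coordinate is $g\cdot(1,0)$; for this to lie in $i(\mu_{\GG_a}^{-1}(0))$ we need $g\cdot(1,0) = (1,0)$, and writing $g = \begin{pmatrix} a & b \\ c & d\end{pmatrix}$ this forces $a = 1$, $c = 0$, hence $d = 1$ and $g = \begin{pmatrix} 1 & b \\ 0 & 1\end{pmatrix} \in \GG_a$. Conversely any such $g$ maps $i(\mu_{\GG_a}^{-1}(0))$ into itself by the $\GG_a$-equivariance of $i$. Statement (ii) then follows: if $i(p)$ and $i(p')$ lie in the same $\SL_2$-orbit, say $i(p') = g\cdot i(p)$, then $g\cdot i(p) = i(p') \in i(\mu_{\GG_a}^{-1}(0))$, so by (iii) $g \in \GG_a$, and injectivity of $i$ together with equivariance gives $p' = g\cdot p$; the converse is immediate from equivariance.

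The only mild subtlety — the step I would flag as the one requiring care rather than the others — is making sure the $\AA^2$-coordinate argument is airtight: one must use that $i$ was \emph{defined} with $(1,0)$ (not an arbitrary vector) in the $\AA^2$-slot, so that the image $i(\mu_{\GG_a}^{-1}(0))$ has constant $\AA^2$-coordinate equal to $(1,0)$, and that the only subgroup of $\SL_2$ fixing this vector is precisely the upper-triangular unipotent $\GG_a$. Everything else is a formal consequence of injectivity and $\GG_a$-equivariance of $i$, both of which are already in hand from Proposition \ref{prop equiv map level sets}. For the families $i_a$ and $j_b$ the same argument applies verbatim, replacing $(1,0)$ by $(a,0)$ or $(0,b)$ and using that the stabiliser of a nonzero vector on a coordinate axis in the standard $\SL_2$-representation is still a conjugate of $\GG_a$ — here the relevant conjugate is $\GG_a$ itself since these vectors lie on the axes fixed by the standard torus-normaliser structure.
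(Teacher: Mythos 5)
Your proposal is correct and follows essentially the same route as the paper: both rest on the single observation that every point of $i(\mu_{\GG_a}^{-1}(0))$ has $\AA^2$-coordinate $(1,0)$, so any $g\in\SL_2$ moving the image into itself must satisfy $g\cdot(1,0)=(1,0)$ and hence be upper-triangular unipotent, after which (i)--(iii) all follow from injectivity and $\GG_a$-equivariance of $i$. The only difference is organisational (you derive (i) and (ii) from (iii), whereas the paper runs the $(1,0)$-argument inside (i) and cites it for the other parts), which is immaterial.
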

\begin{proof}
For (i) suppose $p \in \mu_{\GG_a}^{-1}(0)$ has $\GG_a$-stabiliser subgroup $H$. 
If $g \in \SL_2$ fixes $i(p)$, then 
\[ g \cdot \left( \begin{array}{c} 1 \\ 0 \end{array} \right)=
 \left( \begin{array}{c} 1 \\ 0 \end{array} \right) \]
and thus $g \in \GG_a \subset \SL_2$. By equivariance of $\GG_a$, it follows that $i(p)$ has 
$\SL_2$-stabiliser group $H$.

For (ii), if $i(p)$ and $i(p')$ lie in the same $\SL_2$-orbit , then $g \cdot i(p)=i(p')$ for 
some $g \in \SL_2$. However, the same argument as above shows that $g \in \GG_a$ and so $i(g \cdot p)=i(p')$, 
but as $i$ is injective, we conclude $g \cdot p =p'$. The converse direction is trivial.

For (iii), the inclusion \lq \lq $\subseteq$'' follows by the argument used in (i), and the opposite inclusion follows by $\GG_a$-equivariance of $i$.
\end{proof}

\begin{thm}\label{thm biratl sympl}
Let $V$ be a $\GG_a$-representation and let $W = V \times \AA^2$ be the associated $\SL_2$-representation; then the non-reductive and reductive algebraic symplectic analogues are birationally symplectomorphic. More precisely, the equivariant inclusion $i: \mu_{\GG_a}^{-1}(0) \ra \mu_{\SL_2}^{-1}(0)$ induces a morphism
\[ \mu_{\GG_a}^{-1}(0)^{\overline{s}}/ \GG_a \ra \mu_{\SL_2}^{-1}(0)/\!/\SL_2\ \]
which is an isomorphism on open subsets and, moreover, this isomorphism respects the induced symplectic forms.
\end{thm}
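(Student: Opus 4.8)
The plan is to analyze the map $\mu_{\GG_a}^{-1}(0)^{\overline{s}}/\GG_a \to \mu_{\SL_2}^{-1}(0)/\!/\SL_2$ induced by $i$, show it restricts to an isomorphism over suitable dense open subsets, and then check the symplectic forms match. First I would set up the induced map: since $i$ is $\GG_a$-equivariant (Lemma \ref{stab gp pres}) and $\mu_{\GG_a}^{-1}(0)^{\overline{s}} \subset \mu_{\GG_a}^{-1}(0)$ is the completely stable locus with a geometric quotient, composing $i$ with the GIT quotient $\mu_{\SL_2}^{-1}(0) \to \mu_{\SL_2}^{-1}(0)/\!/\SL_2$ gives a $\GG_a$-invariant morphism, hence a morphism on quotients $\bar{i}: \mu_{\GG_a}^{-1}(0)^{\overline{s}}/\GG_a \to \mu_{\SL_2}^{-1}(0)/\!/\SL_2$. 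The key dimension count: $\dim \mu_{\GG_a}^{-1}(0) = 2n - 1$, so the quotient has dimension $2n - 2$; meanwhile $\dim \mu_{\SL_2}^{-1}(0)$ should be $(2n+4) - 3 = 2n+1$ at generic points (where $d\mu_{\SL_2}$ is surjective), and the generic $\SL_2$-orbit is $3$-dimensional, so $\mu_{\SL_2}^{-1}(0)/\!/\SL_2$ also has dimension $2n - 2$. So $\bar{i}$ is a map between varieties of the same dimension; the target is irreducible (it is $W/\!/\SL_2$-related, and the source is irreducible by normality/irreducibility of $H_V(0)$ under our standing assumption $V \neq \sym^1(\CC^2) \oplus \sym^0(\CC^2)^{\oplus n}$), so it suffices to prove $\bar{i}$ is injective on a dense open and dominant, then invoke birationality via Zariski's main theorem or a direct construction of the inverse.

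For injectivity I would use Lemma \ref{stab gp pres}(ii): two points $p, p' \in \mu_{\GG_a}^{-1}(0)^{\overline{s}}$ have $i(p), i(p')$ in the same $\SL_2$-orbit iff $p, p'$ are in the same $\GG_a$-orbit, so $\bar{i}$ is injective on the locus where points of $\mu_{\SL_2}^{-1}(0)$ that lie in the image of $i$ have closed $\SL_2$-orbits (stable points). The cleaner route is to identify, inside $\mu_{\SL_2}^{-1}(0)$, the open subset where $u \neq 0$ (the first coordinate of the $\AA^2$-factor), which is $\SL_2$-saturated in the sense that every such orbit meets $i_1(\mu_{\GG_a}^{-1}(0))$ — indeed given $(z,\gamma) \in \mu_{\SL_2}^{-1}(0)$ with $u \neq 0$, one can act by an element of $\SL_2$ (rescale and shear) to bring $(u,v)$ to $(1,0)$, and the moment map equations $\Phi_E + v\lambda = 0$, etc., then force $\Phi_H = 0$, i.e. one lands in $i_1(\mu_{\GG_a}^{-1}(0))$; moreover Lemma \ref{stab gp pres}(iii) says the only ambiguity is the residual $\GG_a$. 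This exhibits $\{u \neq 0\}^s/\SL_2$ as isomorphic to an open subset of $\mu_{\GG_a}^{-1}(0)/\GG_a$, and matching it against the completely stable locus on the $\GG_a$-side (using that $H_V(0)^{\overline{s}} \subset H_V(0) - H_V(0)^{\GG_a}$ by Corollary \ref{cor stab hyp} and that semistable = stable there) gives the isomorphism of open subsets.

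For the symplectic form statement, I would argue that both reduced symplectic forms are restrictions/descents of Liouville forms compatible under $i$: the inclusion $\mu_{\GG_a}^{-1}(0) \hookrightarrow T^*V$ and the composite with $\iota: T^*V \hookrightarrow T^*W$ from the earlier remark relate the Liouville forms, since $i_1$ differs from the naive inclusion only in the $T^*\AA^2$-coordinates, and on $\mu_{\GG_a}^{-1}(0)$ the extra terms $du \wedge d\lambda + dv \wedge d\eta$ pull back to $d(\text{const}) \wedge d(-\Phi_H) + 0 = 0$ since $u = a$ is constant and $v = 0$. Hence $i_1^* \omega_{T^*W} = \omega_{T^*V}|_{\mu_{\GG_a}^{-1}(0)}$, and both reduced forms (defined via Lemma \ref{alg mmap def}(iv) / Proposition \ref{alg mww}) are the unique descents of these, so they agree under the isomorphism of open subsets.

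The main obstacle I expect is the careful bookkeeping showing that the open locus $\{u \neq 0\}$ inside $\mu_{\SL_2}^{-1}(0)$ consists of $\SL_2$-stable points (closed orbits, trivial stabiliser) and is nonempty and dense — this requires knowing the $\SL_2$-semistable locus for the GIT quotient $\mu_{\SL_2}^{-1}(0)/\!/\SL_2$ well enough to see that a generic such point is stable, which in turn rests on the structure of $\mu_{\SL_2}^{-1}(0)$ (irreducibility, dimension, generic freeness of the $\SL_2$-action). A secondary subtlety is handling the case where the relevant open subsets might a priori be empty; but since we assume $V$ non-trivial and $V \neq \sym^1(\CC^2) \oplus \sym^0(\CC^2)^{\oplus n}$, both sides are irreducible of the same positive dimension and the constructed map is dominant, so nonemptiness follows.
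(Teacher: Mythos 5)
Your proposal is correct and follows essentially the same route as the paper: the paper sets $U=\mu_{\GG_a}^{-1}(0)^{\overline{s}}\cap i^{-1}(\mu_{\SL_2}^{-1}(0)^{\SL_2-s})$ and $U'=\SL_2\cdot i(U)$, shows $U\times_{\GG_a}\SL_2\to U'$ is bijective onto a smooth variety (hence an isomorphism) using Lemma \ref{stab gp pres}, deduces $U/\GG_a\cong U'/\SL_2$, and verifies $i^*\omega_W=\omega_V$ exactly as you do, so your ``sweep the $u\neq 0$ locus back to $i_1(\mu_{\GG_a}^{-1}(0))$'' argument is the same saturation mechanism and the dimension count plus Zariski's main theorem are not needed. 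One small slip to fix: after moving $(u,v)$ to $(1,0)$, the equation $\Phi_E+v\lambda=0$ forces $\Phi_E=0$ (not $\Phi_H=0$), which is precisely the condition $\mu_{\GG_a}=0$ needed to land in $i_1(\mu_{\GG_a}^{-1}(0))$, with $\lambda$ and $\eta$ then determined by $-\Phi_H$ and $-\Phi_F$.
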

\begin{proof}
Let $U=\mu_{\GG_a}^{-1}(0)^{\overline{s}} \cap i^{-1}(\mu_{SL_2}^{-1}(0)^{\SL_2-s})$; then $U \subset \mu_{\GG_a}^{-1}(0)$ is a $\GG_a$-invariant open subset. Let $U'= \SL_2 \cdot i(U)$; 
that is, $U'$ is the image of the morphism $U \times_{\GG_a} \SL_2 \ra \mu_{\SL_2}^{-1}(0)$ 
given by $[u,g] \mapsto g \cdot i(u)$. By construction,  
$U' \subset \mu_{\SL_2}^{-1}(0)^{\SL_2-s}$ is an $\SL_2$-invariant open subset on which $\SL_2$ acts 
with trivial stabilisers by Lemma \ref{stab gp pres} (i); therefore, $U'$ 
is smooth by Lemma \ref{alg mww} (iii). It follows from Lemma \ref{stab gp pres} (iii) that the morphism $U \times_{\GG_a} \SL_2 \ra U'$ is bijective and, moreover, is an isomorphism as $U'$ is smooth.

The $\GG_a$-action on $ U \subset \mu_{\GG_a}^{-1}(0)^{\overline{s}}$ admits a Zariski locally trivial geometric quotient and the $\SL_2$-action on $U'\subset \mu_{SL_2}^{-1}(0)^{\SL_2-s}$ admits a geometric quotient, which is a principal $\SL_2$-bundle. Moreover, we have isomorphisms
\[U/\GG_a \cong (U \times_{\GG_a} \SL_2)/\SL_2 \cong U'/\SL_2\]
which proves the desired birationality statement.  

The Liouville form $\omega_V$ on $T^*V$ (resp. $\omega_W$ on $T^*W$) induces algebraic 
symplectic forms  $\omega_{\GG_a}$ (resp. $\omega_{\SL_2}$) on $U/\GG_a$ (resp. $U'/\SL_2$) 
by Proposition \ref{nonred alg symp analog} (resp. Lemma \ref{alg mmw open}). We 
consider the commutative square
\[\xymatrix@1{ U \ar[r]^{i}\ar[d]_{\pi_{\GG_a}}& U'
\ar[d]^{\pi_{\SL_2}}  \\ U/\GG_a \ar[r]^{{\overline{i}}} & U'/\SL_2.}\]
From the explicit definition of $i : T^*V \ra T^*W$, we see that $i^*\omega_{W} = \omega_{V}$. 
Therefore
\[ \pi^*_{\GG_a}(\omega_{\GG_a})=\omega_{V}= {i}^*(\omega_{W}) = 
i^*\pi_{\SL_2}^*(\omega_{\SL_2})= \pi_{\GG_a}^* \overline{i}^*(\omega_{\SL_2}).\]
Since $ \pi_{\GG_a} : U \ra U/\GG_a$ is surjective, it follows that  
$\omega_{\GG_a} =  \overline{i}^* \omega_{\SL_2}$.
\end{proof}

For the reductive algebraic symplectic analogue, we can use the Kempf-Ness Theorem for $\SL_2$, which gives a homeomorphism between the algebraic symplectic analogue and the hyperk\"{a}hler analogue.  Then using the above birational symplectomorphism we obtain the following corollary.

\begin{cor}
The additive algebraic symplectic analogue $\mu_{\GG_a}^{-1}(0)^{\overline{s}}/ \GG_a$ of $V^{\overline{s}}/\GG_a$ admits a hyperk\"{a}hler structure on a dense open subset.  Furthermore, assuming a bounded number of trivial representations in $V$, then for all but finitely many cases, this is a codimension at least 2 complement in $\mu_{\GG_a}^{-1}(0)/\!/\ \GG_a$.
\end{cor}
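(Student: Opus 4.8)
The strategy is to read the corollary off from Theorem~\ref{thm biratl sympl}, the Kempf--Ness theorem applied to $\SL_2$, and the codimension estimates already available from Lemma~\ref{lemma comp stable} and Corollary~\ref{cor stab hyp}. For the hyperk\"{a}hler statement, recall from the proof of Theorem~\ref{thm biratl sympl} that the equivariant inclusion $i$ produces an isomorphism $\overline{i}\colon U/\GG_a \to U'/\SL_2$ of a nonempty open subset of the additive analogue $\mu_{\GG_a}^{-1}(0)^{\overline{s}}/\GG_a$ with an open subset of the reductive analogue $\mu_{\SL_2}^{-1}(0)/\!/\SL_2$, intertwining the holomorphic symplectic forms induced by the Liouville forms, and that $\SL_2$ --- hence its maximal compact subgroup $K=\SU(2)$ --- acts freely on $U'$ by Lemma~\ref{stab gp pres}(i). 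I would then invoke the Kempf--Ness theorem for $\SL_2 = K_\CC$ to identify $\mu_{\SL_2}^{-1}(0)/\!/\SL_2$ homeomorphically with the hyperk\"{a}hler reduction $\mu_{HK}^{-1}(0,0)/K$; although $(0,0)$ is not a global regular value of $\mu_{HK}$, the image of $U'/\SL_2$ lies in the open locus of closed $\SL_2$-orbits with trivial stabiliser, on which the hyperk\"{a}hler reduction is a genuine hyperk\"{a}hler manifold by the hyperk\"{a}hler Marsden--Weinstein theorem \cite{hitchinetal}. Transporting this structure back along the homeomorphism and then along $\overline{i}$ equips $U/\GG_a$ with a hyperk\"{a}hler structure whose underlying holomorphic symplectic form is the one induced by the Liouville form; and since $\mu_{\GG_a}^{-1}(0)^{\overline{s}}/\GG_a$ is irreducible whenever $H_V(0)$ is (Proposition~\ref{prop hypersurfaces}), this open set is dense.

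The point I expect to be the main obstacle is upgrading the open set $U$ of Theorem~\ref{thm biratl sympl} to all of $\mu_{\GG_a}^{-1}(0)^{\overline{s}}$, i.e.\ showing that $i$ carries the completely stable locus of $\mu_{\GG_a}^{-1}(0)$ into the $\SL_2$-stable locus of $\mu_{\SL_2}^{-1}(0)$. As the $\SL_2$-linearisation is trivial, every point is semistable, and by Lemma~\ref{stab gp pres}(i) together with unipotence of $\GG_a$ the stabiliser of $i(x,\alpha)$ is trivial; so by the Hilbert--Mumford closed-orbit criterion it suffices to rule out a one-parameter subgroup $\lambda$ for which $\lim_{t\to 0}\lambda(t)\cdot i(x,\alpha)$ exists. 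Such a $\lambda$ must fix the line through the $\AA^2$-component $(1,0)$ of $i(x,\alpha)$, hence lies in the upper-triangular Borel, so up to reparametrisation $\lambda=u\lambda_0 u^{-1}$ with $u\in\GG_a$ and $\lambda_0(t)=\diag(t,t^{-1})$; using the $\GG_a$-equivariance of $i$ and a weight count for the diagonal torus $T_d$ acting on $T^*W$, the existence of the limit forces $u^{-1}\cdot(x,\alpha)$ into the non-negative $T_d$-weight space of $T^*V$, contradicting complete stability of $(x,\alpha)$ by Proposition~\ref{nonred HM} and the $\GG_a$-invariance of the completely stable locus. The care needed here is that it is the \emph{closedness} of the orbit, rather than mere semistability, that is at issue. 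With this in hand, the hyperk\"{a}hler locus is the whole additive analogue $\mu_{\GG_a}^{-1}(0)^{\overline{s}}/\GG_a$.

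It then remains to bound the codimension of $\mu_{\GG_a}^{-1}(0)^{\overline{s}}/\GG_a$ inside $\mu_{\GG_a}^{-1}(0)/\!/\GG_a$ (or, when $\CC[H_V(0)]^{\GG_a}$ fails to be finitely generated, inside the central fibre $\mathcal{X}_0$, which contains it as an open subset by Proposition~\ref{nonred alg symp analog}(v)). Writing $V=\bigoplus_j\sym^{k_j}(\CC^2)$ and $n=\dim V$, Corollary~\ref{cor stab hyp} shows that $H_V(0)\setminus H_V(0)^{\overline{s}}$ equals $T^*V\setminus(T^*V)^{\overline{s}}$, a coordinate subspace of dimension $2n-2\sum_j\lceil k_j/2\rceil$ by Lemma~\ref{lemma comp stable}; as it properly contains the $\GG_a$-fixed locus except when $V=\sym^1(\CC^2)$, its image in the quotient has dimension one less, whereas the quotient has dimension $2n-2$, so --- exactly as in the codimension computations preceding Section~4 --- the complement of the hyperk\"{a}hler locus has codimension $2\sum_j\lceil k_j/2\rceil-1$. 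This is $\geq 2$ precisely when $\sum_j\lceil k_j/2\rceil\geq 2$; the only exceptions are $V\cong\sym^1(\CC^2)\oplus\sym^0(\CC^2)^{\oplus m}$ and $V\cong\sym^2(\CC^2)\oplus\sym^0(\CC^2)^{\oplus m}$, which are finite in number once the number $m$ of trivial summands is bounded --- precisely the small (and, in the $\sym^1$ case, non-normal) cases analysed directly in $\S$\ref{sec fg}, $\S$\ref{sec sym2} and $\S$\ref{sec ex 2dim}.
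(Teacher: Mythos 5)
Your proposal is correct in substance and reaches the same conclusion, but it takes a genuinely different route on both halves of the statement. For the hyperk\"{a}hler part, the paper simply cites what is already established: Theorem \ref{thm biratl sympl} supplies an open subset $U/\GG_a$ identified symplectically with an open subset of $\mu_{\SL_2}^{-1}(0)/\!/\SL_2$, and Kempf--Ness transports the hyperk\"{a}hler structure; the paper makes no attempt to enlarge $U$. You instead prove the stronger containment $i(\mu_{\GG_a}^{-1}(0)^{\overline{s}})\subset\mu_{\SL_2}^{-1}(0)^{\SL_2-s}$ for arbitrary $V$ via a Hilbert--Mumford/closed-orbit argument (reducing any destabilising one-parameter subgroup to a $\GG_a$-conjugate of $\diag(t,t^{-1})$ using the $\AA^2$-component $(1,0)$, then invoking Proposition \ref{nonred HM}); this generalises what the paper only establishes for $V=\sym^1(\CC^2)^{\oplus n}$ in Proposition \ref{prop:2x2hyperkahler}, and by a different mechanism (the paper uses there that stability for sums of standard representations is detected by non-vanishing of invariants). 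The payoff is that your hyperk\"{a}hler locus is all of $\mu_{\GG_a}^{-1}(0)^{\overline{s}}/\GG_a$, so the codimension count collapses to the explicit formula $2\sum_j\lceil k_j/2\rceil-1$ from Lemma \ref{lemma comp stable}, with the exceptional cases pinned down exactly as $\sym^1(\CC^2)\oplus\sym^0(\CC^2)^{\oplus m}$ and $\sym^2(\CC^2)\oplus\sym^0(\CC^2)^{\oplus m}$. The paper's codimension argument is cruder but self-contained given its smaller $U$: it bounds the codimension of the $\SL_2$-non-stable locus in $T^*W$ against the codimensions of $\mu_{\SL_2}^{-1}(0)$ (at most $3$) and of $i(\mu_{\GG_a}^{-1}(0))$ (at most $5$) in $T^*W$, concluding ``for all but finitely many cases'' without listing them, and then appeals to upper semi-continuity to pass to the quotient. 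Your version is sharper and arguably what the remark following the corollary alludes to.

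One step deserves more care: you assert that the image of the non-completely-stable locus in the quotient has ``dimension one less.'' On that locus the $\GG_a$-orbits can be zero-dimensional (it contains the fixed locus), so the image need not drop dimension, and moreover $\mathcal{X}_0$ may be strictly larger than the image of $H_V(0)$ since the enveloping quotient need not be surjective. The safe bound is therefore a codimension of at least $2\sum_j\lceil k_j/2\rceil-2$, which happily gives the same threshold $\sum_j\lceil k_j/2\rceil\geq 2$ and hence the same list of exceptions; the paper's own ``upper semi-continuity'' sentence papers over the identical issue. This is an imprecision to tidy, not a gap in the argument.
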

\begin{proof}
Everything has been established except the codimension at least two condition.  This follows from a dimension count.  The non-stable locus for such a representation is high codimension, but $\mu_{\SL_2}^{-1}(0)$ is always codimension at most 3, and the image of $\mu_{\GG_a}^{-1}(0)$ under the inclusion is codimension at most 5, in $T^*W$.  Thus $U$ in the proof of the theorem is high codimension complement both in $\mu_{\GG_a}^{-1}(0)$ and in $\mu_{\SL_2}^{-1}(0)$ for all but finitely many cases.  By upper semi-continuity, the codimension of the complement can only increase in the quotients and analogues.  
\end{proof}

\begin{rmk}
In fact, the proof can easily be refined to handle all the cases with a computation.  It is likely that the codimension at least 2 complement result always holds.  One can refine the statement further to get explicit comparisons between the high codimension loci for the $\SL_2$ quotient analogues and the $\GG_a$ quotient analogues.  
\end{rmk}

\section{Geometric applications}

In \cite{brentnoah}, an algebraic uniformisation for an iterated blow up $X$ of a projective space along a collection of linear subspaces is provided by a description of $X$ as a non-reductive GIT quotient of an affine space $V$ by a solvable group $G=\GG_a^{t} \rtimes \GG_m^{s}$, where $s$ is the Picard rank of $X$ and $t \geq 0$. One can consider the cotangent lift of this action to $T^*V$ and take an algebraic symplectic reduction by $G$, by extending the ideas of $\S$\ref{sec G_a asr}, to construct an algebraic symplectic analogue of $X$. However, the construction of \cite{brentnoah}, often involves several copies of $\GG_a$ (that is, $t \geq 2$) or a non-linearisable action of $\GG_a$ (which is the case, for example, if $X = \overline{M}_{0,n}$ and $n \geq 6$).

The simplest geometric example covered by \cite{brentnoah}, which we can directly apply the techniques of $\S$\ref{sec G_a asr}, is $X=\text{Bl}_n\PP^{n-2}$, which is a non-reductive GIT quotient of $V=\AA^{2n}$ by a linear action of $\GG_a \rtimes (\GG_m)^{n+1}$. Let us briefly explain how this construction works. Let $X' := \text{Bl}_n\PP^{n-1}$, which is toric, as we can choose the $n$ points to be the $n$ coordinate points in $\PP^{n-1}$. The Cox ring of $X'$ is given by
\[ \Cox(X')=\CC[y_1,x_1,\dots, y_n,x_n],\]
where $y_1, \dots , y_n$ are the homogeneous coordinates on $\PP^{n-1}$ before blowing up and $x_i$ represent the exceptional divisors of each point being blown up. Then $X'$ is a GIT quotient of $\AA^{2n} =\spec\Cox(X')$ by its Picard torus $T_{X'} \cong T_X \cong \GG_m^{n+1}$, which acts as follows: for $t=(t_0,t_1, \dots , t_n) \in T_{X'}$
\[t \cdot (y_i,x_i) = (t_0\prod_{j \neq i}t_j^{-1} y_i, t_ix_i).\]
To relate $X'$ and $X$, consider the linear projection $\PP^{n-1} \dashrightarrow \PP^{n-2}$ sending the $n$ coordinate points in $\PP^{n-1}$ to the given $n$ points in $\PP^{n-2}$; this induces a rational map $X' \dashrightarrow X$, which is a Zariski-locally trivial $\AA^1$-bundle over its image. Moreover, this lifts to a $\GG_a$-action on $\AA^{2n} = \spec \Cox(X')$ given by, for $c \in \GG_a$,
\[c \cdot(y_i,x_i) = (y_i + cx_i,x_i),\]
which is normalised by the torus action. By \cite{brentnoah} Theorem 1.1, $X = \AA^n/\!/_\chi \GG_a \rtimes (\GG_m)^{n+1}$ for $\chi \in \text{Hom}(T_X,\GG_m)\cong\pic(X)$ an ample divisor class.

In this class of examples, the $\GG_a$-action on $V=\AA^{2n}$ is linear and, as an 
$\SL_2$-representation we have $V=\sym^1(\CC^2)^{\oplus n}$. In the next subsection, we 
study these representations and the algebraic symplectic analogues of $V/\!/\GG_a$ in greater 
depth.

\subsection{Representation of the form $V = \sym^1(\CC^2)^{\oplus n}$}\label{sec sym1s}

Consider the $\GG_a$-representation $V = \sym^1(\CC^2)^{\oplus n}$ and its cotangent lift to $T^*V$. More precisely, if we write 
\[ \CC[V]=\CC[y_i,x_i]_{1 \leq i \leq n} \quad \text{and} \quad \CC[T^*V]=\CC[y_i,x_i,\beta_i,\alpha_i]_{1\leq i \leq n},\]
then $c \in \GG_a$ acts by
\[c \cdot (y_i,x_i,\beta_i,\alpha_i) =(y_i+cx_i,x_i,\beta_i,\alpha_i - c \beta_i).\]
Hence, the $\GG_a$-moment map $\mu: T^*V \ra \CC$ is given by $\sum_{j=1}^n \beta_j x_j$. 

\begin{prop}\label{prop sym1}
Let $V = \sym^1(\CC^2)^{\oplus n}$ be as above. Then
\begin{enumerate}[(i)]
\item The complement to the completely stable locus for 
the $\GG_a$-action on $\mu_{\GG_a}^{-1}(0)$ is the $\GG_a$-fixed locus. In particular, the completely 
stable locus is the largest possible stable set.
\item The $\GG_a$-invariants on $T^*V$ give a separating set of invariants for the $\GG_a$-action on $\mu_{\GG_a}^{-1}(0)$ and, moreover, the nullcone of these invariants is the $\GG_a$-fixed locus.
\end{enumerate}
\end{prop}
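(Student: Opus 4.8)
\noindent\textit{Sketch of the argument.}

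Part (i) should drop out of the stability analysis already established. Writing $V=\sym^1(\CC^2)^{\oplus n}$ as a sum of $n$ copies of the two‑dimensional irreducible, every summand has $k_j=1$ and hence $\lceil k_j/2\rceil=1$, so Lemma~\ref{lemma comp stable} gives
\[ T^*V-(T^*V)^{\overline{s}}=\{(y,x,\beta,\alpha)\,:\,x_i=\beta_i=0,\ 1\le i\le n\}, \]
which is exactly the $\GG_a$‑fixed locus $(T^*V)^{\GG_a}$ (a point is $\GG_a$‑fixed precisely when $cx_i=c\beta_i=0$ for all $c$). Since $(T^*V)^{\GG_a}\subset H_V(0)=\mu_{\GG_a}^{-1}(0)$, intersecting with $H_V(0)$ yields $H_V(0)-H_V(0)^{\overline{s}}=H_V(0)^{\GG_a}$. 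As $\GG_a$ is torsion‑free, any point with non‑trivial stabiliser is fixed, so every locus on which $\GG_a$ acts locally freely avoids $H_V(0)^{\GG_a}$; hence $H_V(0)^{\overline{s}}$ is the largest possible stable set.

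For (ii) the first step is to make $\CC[T^*V]^{\GG_a}$ explicit. Using the isomorphism $\CC[T^*V]^{\GG_a}\cong\CC[T^*V\times\AA^2]^{\SL_2}$ (Weitzenb\"{o}ck; see Remark~\ref{rmk corr sl2 inv}) together with the fact that $T^*V\times\AA^2$, after identifying $V^*$ with the standard representation via the symplectic form on $\CC^2$, is a direct sum of $2n+1$ copies of the standard $\SL_2$‑representation, the first fundamental theorem for $\SL_2$ shows this invariant ring is generated by the $2\times 2$ determinants of pairs of these vectors. Evaluating at the $\GG_a$‑fixed point $(1,0)\in\AA^2$, these specialise to
\[ x_i,\quad \beta_i,\quad p_{ij}:=y_ix_j-x_iy_j,\quad q_{ij}:=\beta_i\alpha_j-\beta_j\alpha_i,\quad \delta_{ij}:=y_i\beta_j+x_i\alpha_j \qquad(1\le i,j\le n), \]
each of which one checks directly to be $\GG_a$‑invariant. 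For the null‑cone I would use the $\GG_m$‑action $t\cdot(y_i,x_i,\beta_i,\alpha_i)=(y_i,tx_i,t\beta_i,\alpha_i)$, which normalises $\GG_a$ (acting on $\Lie \GG_a$ with weight $-1$); the corresponding weight of a monomial counts its $x$'s and $\beta$'s, so $\CC[T^*V]^{\GG_a}$ is non‑negatively graded, and since $\GG_a$ moves $y_i,\alpha_i$ only through $x_i,\beta_i$ the weight‑zero invariants are just the constants. Hence the augmentation ideal of $\CC[T^*V]^{\GG_a}$ lies in $(x_1,\dots,x_n,\beta_1,\dots,\beta_n)$, so its zero locus is $\{x_i=\beta_i=0\}=(T^*V)^{\GG_a}$ (the reverse containment is clear, as $x_i,\beta_i$ are themselves invariants); intersecting with $H_V(0)$ gives that the null‑cone on $H_V(0)$ is $H_V(0)^{\GG_a}$.

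For the separation claim the plan is a short case analysis. Suppose $p=(y_i,x_i,\beta_i,\alpha_i)$ and $q=(y_i',x_i',\beta_i',\alpha_i')$ lie in $H_V(0)$ and all the generators above agree on them. Equality of the $x_i$ and $\beta_i$ gives $x_i'=x_i$, $\beta_i'=\beta_i$; putting $a_i:=y_i-y_i'$ and $b_i:=\alpha_i'-\alpha_i$, the equalities of $p_{ij}$, $q_{ij}$, $\delta_{ij}$ become $a_ix_j=a_jx_i$, $\beta_ib_j=\beta_jb_i$, and $a_i\beta_j=x_ib_j$. If some $x_{i_0}\ne 0$, the last relations give $b_j=c\beta_j$ and the first give $a_j=cx_j$ with $c=a_{i_0}/x_{i_0}$, whence $q=(-c)\cdot p$; if all $x_i=0$ but some $\beta_{i_0}\ne 0$, the $\delta$‑relations force $a_j=0$ and the $q$‑relations give $b_j=c\beta_j$, so again $q=(-c)\cdot p$; otherwise all $x_i=\beta_i=0$, i.e.\ $p,q\in H_V(0)^{\GG_a}$. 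So two points of $H_V(0)$ fail to be separated by the extrinsic invariants only when they lie in a single $\GG_a$‑orbit or both lie in $H_V(0)^{\GG_a}$; in the first case nothing separates them, so it remains to check that the intrinsic ring $\CC[H_V(0)]^{\GG_a}$ is also constant on $H_V(0)^{\GG_a}$.

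This last point is, I expect, the real obstacle, and I would handle it by reduction to $n=1$. For $n=1$ a direct computation (in the spirit of Example~\ref{ex non fg}): $H_{\sym^1(\CC^2)}(0)=\{\beta x=0\}$ has two components on which the $\GG_a$‑invariants are $\CC[x,\alpha]$ and $\CC[y,\beta]$, and compatibility along the common fixed locus $\{x=\beta=0\}$ forces any global invariant to restrict there to a constant. For general $n$, given $f\in\CC[H_V(0)]^{\GG_a}$ and an index $j$, I would restrict $f$ to the $\GG_a$‑invariant subvariety $Z_j:=\{x_i=\beta_i=0\ (i\ne j)\}\cap H_V(0)$, which is $\GG_a$‑equivariantly isomorphic to $H_{\sym^1(\CC^2)}(0)\times\AA^{2(n-1)}$ with $\GG_a$ acting on the first factor only; by the $n=1$ case, $f$ restricted to $Z_j\cap(T^*V)^{\GG_a}=H_V(0)^{\GG_a}$ is independent of $(y_j,\alpha_j)$. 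Letting $j$ run over $1,\dots,n$ shows $f|_{H_V(0)^{\GG_a}}$ is constant, which finishes the separation statement, and the null‑cone statement is the computation already carried out above. The $\SL_2$‑identification of the extrinsic invariants and the case‑check are routine; organising the reduction to $n=1$ cleanly is the only genuinely delicate step.
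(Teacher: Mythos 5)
Your proposal is correct, and part (i) coincides with the paper's argument (read off the complement of the completely stable set from Lemma \ref{lemma comp stable} and observe that it is exactly the $\GG_a$-fixed locus). For part (ii) you take a genuinely different route at the one place where there is real content, namely showing that the \emph{intrinsic} invariants in $\CC[H_V(0)]^{\GG_a}$ do no better than the ambient ones on the fixed locus. The paper lifts such an invariant $f$ to $\CC[T^*V]$, notes that its derivation must lie in the ideal $(\sum_i x_i\beta_i)$, and argues by inspecting the monomials of $f$ involving only the $y_i,\alpha_i$: their derivations produce terms linear in the $x_i,\beta_i$ which cannot be cancelled against the degree-two generator of the ideal, so no nonconstant such monomial occurs. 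You instead restrict $f$ to the invariant subvarieties $Z_j=\{x_i=\beta_i=0,\ i\neq j\}\cap H_V(0)\cong H_{\sym^1(\CC^2)}(0)\times\AA^{2(n-1)}$ and exploit the two-component structure of $H_{\sym^1(\CC^2)}(0)$: the invariants on the two components are $\CC[y_j,\beta_j]$ and $\CC[x_j,\alpha_j]$, and compatibility on their intersection forces the restriction of $f$ to the fixed locus to be independent of $(y_j,\alpha_j)$; running over $j$ gives constancy. Both arguments are sound; yours trades the paper's monomial bookkeeping (which is only sketched there --- ``can only be integrated in one way'') for a clean reduction to $n=1$, and your explicit case analysis with $x_i,\beta_i,p_{ij},q_{ij},\delta_{ij}$ makes the separation logic (agreement of ambient invariants forces same orbit or both points fixed) explicit where the paper leaves it implicit in ``from which Statement (ii) follows''. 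Two cosmetic points: the sign of the $\GG_m$-weight on $\Lie\GG_a$ is immaterial, and the appeal to the first fundamental theorem is not actually load-bearing --- your case analysis only needs that the listed functions \emph{are} invariant, not that they generate.
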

\begin{proof}
For (i), by Lemma \ref{lemma comp stable},
\[\mu_{\GG_a}^{-1}(0) - \mu_{\GG_a}^{-1}(0)^{\overline{s}}= \{(y_i,x_i,\beta_i,\alpha_i)_{1 \leq i \leq n} \: \: : x_i = \beta_i =0 \:\:\text{for} \:\:i =1, \dots ,n \},\] 
which is precisely the $\GG_a$-fixed locus.

For (ii), we let $N({T^*V})$ (resp. $N({\mu_{\GG_a}^{-1}(0)})$) denote the nullcone, which is the common zero set of all non-constant homogeneous $\GG_a$-invariant functions on $T^*V$ (resp. $\mu_{\GG_a}^{-1}(0)$). For $T^*V$, the ring of $\GG_a$-invariant functions is finitely generated:
\[ \CC[x_i,\beta_i,x_iy_j - x_jy_i,\beta_i\alpha_j - \beta_j\alpha_i,x_i\alpha_j +y_i\beta_j] \twoheadrightarrow \CC[T^*V]^{\GG_a}\]
and $N({T^*V})=\{ (y_i,x_i,\beta_i,\alpha_i)_{1 \leq i \leq n} : x_i = \beta_i =0 \text{ for } i =1, \dots ,n \} =\mu_{\GG_a}^{-1}(0)^{\GG_a}$. As there may be additional invariants on $\mu_{\GG_a}^{-1}(0)$, which do not extend to $T^*V$, we have $N({\mu_{\GG_a}^{-1}(0)}) \subset N({T^*V})$. However, we claim in this case, that these nullcones coincide, from which Statement (ii) follows. We argue by contradiction: suppose $f \in \CC[\mu_{\GG_a}^{-1}(0)]^{\GG_a}$ and $f$ is not identically zero on $N_{T^*V}$. We can lift $f$ to a (no longer invariant) function on $T^*V$, whose derivation under the $\GG_a$-action must lie in the ideal $I:=(\sum_{i=1}^n x_i\beta_i)$. Since $f$ is not identically zero on $N_{T^*V}$, it must contain a monomial of the form $m = \prod_{i=1}^n y_i^{r_i}\alpha_i^{s_i}$. Then the derivation of $m$ is given by
\[ D(m) = \sum_{i=1}^n \prod_{j \neq i} y_j^{r_j}\alpha_j^{s_j} \left(r_ix_iy_i^{r_i -1}\alpha_i^{s_i} - s_i y_i^{r_i} \alpha_i^{s_i-1}\beta_i \right). \]
However, each monomial in $D(m)$ is not divisible by any of the monomials $x_i\beta_i$ in the equation defining $I$ and can only be \lq integrated' in one way (as only $x_i=D(y_i)$ and $\beta_i = D(-\alpha_i)$ lie in the image of $D$). Hence, no such $f$ can exist with $D(f) \in I$.
\end{proof}

This proposition justifies the following definition.

\begin{defn}
For $\GG_a$ acting on $\mu_{\GG_a}^{-1}(0)$, we define the non-reductive quotient
\[ q : \mu_{\GG_a}^{-1}(0) \ra \mu_{\GG_a}^{-1}(0)/\!/\GG_a:=\spec(\CC[T^*V]^{\GG_a}/I) \]
where $I$ is the ideal defining $\mu_{\GG_a}^{-1}(0) \subset T^*V$.
\end{defn}

\begin{rmk}
Using this definition, we have $\mu_{\GG_a}^{-1}(0)/\!/\GG_a = \mathcal{X}_0$, where $\mathcal{X}_0$ is the central fibre of the 1-parameter family of algebraic symplectic reductions given by Corollary \ref{cor central fibre}.
\end{rmk}

By Proposition \ref{prop sym1}, we have $q^{-1}(0) = \mu_{\GG_a}^{-1}(0)^{\GG_a}=\mu_{\GG_a}^{-1}(0) - \mu_{\GG_a}^{-1}(0)^s$ and 
$q(\mu_{\GG_a}^{-1}(0)^{s})=\mu_{\GG_a}^{-1}(0)^{s}/\GG_a$ has the structure of a smooth 
algebraic symplectic variety by Proposition \ref{nonred alg symp analog}. 

Let $W = V \times \AA^2$, which as an $\SL_2$-representation, we have $W =  \sym^1(\CC^2)^{\oplus n+1}$. 
Let $\mu_{\SL_2}^{-1}(0) \subset T^*W$ be the zero level set for the $\SL_2$-moment map on $T^*W$ and recall that there is a $\GG_a$-equivariant closed immersion $i : \mu_{\GG_a}^{-1}(0) \hookrightarrow \mu_{\SL_2}^{-1}(0)$ given by Proposition \ref{prop equiv map level sets}.

\begin{prop}\label{prop:2x2hyperkahler}
Let $V$ and $W$ be as above. Then
\begin{enumerate}[(i)]
\item $i(\mu_{\GG_a}^{-1}(0)^s) \subset \mu_{\SL_2}^{-1}(0)^{\SL_2-s}$,
\item $\mu_{\GG_a}^{-1}(0)^s/\GG_a$ has a hyperk\"{a}hler structure.
\end{enumerate}
\end{prop}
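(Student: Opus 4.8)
The plan is to handle (i) by making $\SL_2$-stability in $\mu_{\SL_2}^{-1}(0)$ completely explicit, and then to deduce (ii) by combining (i) with Theorem~\ref{thm biratl sympl} and the Kempf--Ness description of the reductive analogue. For (i), note that $\sym^1(\CC^2)$ is self-dual as an $\SL_2$-module and $W=\sym^1(\CC^2)^{\oplus(n+1)}$, so $T^*W$ is isomorphic, as an $\SL_2$-representation, to $\sym^1(\CC^2)^{\oplus 2(n+1)}$; hence a point of $T^*W$ is a $2(n+1)$-tuple of vectors in $\CC^2$. By the first fundamental theorem for $\SL_2$, such a tuple lies in the nullcone precisely when all its vectors are collinear, while a tuple spanning $\CC^2$ has closed orbit and trivial stabiliser and is therefore $\SL_2$-stable; since $\mu_{\SL_2}^{-1}(0)$ is a closed $\SL_2$-invariant subvariety of $T^*W$, a point of $\mu_{\SL_2}^{-1}(0)$ is $\SL_2$-stable there if and only if it is so in $T^*W$, i.e.\ if and only if its associated $2(n+1)$ vectors span $\CC^2$. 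First I would write out, from the formula for $i=i_1$ in Proposition~\ref{prop equiv map level sets} and the components $\Phi_H=\sum_j(y_j\beta_j-x_j\alpha_j)$, $\Phi_F=\sum_j y_j\alpha_j$ of Example~\ref{ex irred reps}, the vectors attached to $i(p)$ for $p=(y_i,x_i,\beta_i,\alpha_i)\in\mu_{\GG_a}^{-1}(0)$: under the self-duality identification these are $(y_i,x_i)$ and $(\alpha_i,-\beta_i)$ for $1\le i\le n$, the vector $(1,0)$ coming from the $\AA^2$-summand of $W$, and one further vector assembled from $\Phi_H$ and $\Phi_F$. As $(1,0)$ is already among them, the tuple fails to span $\CC^2$ exactly when every $x_i$ and every $\beta_i$ vanishes, in which case a one-line check shows the remaining vector lies on $\CC\cdot(1,0)$ as well --- that is, exactly on the $\GG_a$-fixed locus of $\mu_{\GG_a}^{-1}(0)$. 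By Proposition~\ref{prop sym1}(i) this fixed locus is the complement of $\mu_{\GG_a}^{-1}(0)^s$, which gives (i) --- in fact with the equality $\mu_{\GG_a}^{-1}(0)^s=\mu_{\GG_a}^{-1}(0)\cap i^{-1}(\mu_{\SL_2}^{-1}(0)^{\SL_2-s})$.

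For (ii), part (i) lets me take $U=\mu_{\GG_a}^{-1}(0)^s$ in the proof of Theorem~\ref{thm biratl sympl}, since then $U=\mu_{\GG_a}^{-1}(0)^{\overline s}\cap i^{-1}(\mu_{\SL_2}^{-1}(0)^{\SL_2-s})$; that proof produces an $\SL_2$-invariant open subset $U'\subseteq\mu_{\SL_2}^{-1}(0)^{\SL_2-s}$ on which $\SL_2$ acts freely, together with an isomorphism of algebraic symplectic varieties $\mu_{\GG_a}^{-1}(0)^s/\GG_a\xrightarrow{\sim}U'/\SL_2$. Now $W=\sym^1(\CC^2)^{\oplus(n+1)}$ is a unitary representation of the maximal compact $\SU(2)\subset\SL_2$, so the hyperk\"ahler reduction machinery recalled above applies to $T^*W\cong\HH^{2(n+1)}$: the $\SU(2)$-action is hyper-Hamiltonian with hyperk\"ahler moment map $\mu_{HK}=(\mu_\RR,\mu_\CC)$ whose complex part is $\mu_\CC=\mu_{\SL_2}$, and the Kempf--Ness theorem \cite{kempf_ness} identifies $\mu_{\SL_2}^{-1}(0)/\!/\SL_2$ with $(\mu_\RR^{-1}(0)\cap\mu_{\SL_2}^{-1}(0))/\SU(2)$ in a way that restricts over the $\SL_2$-stable locus. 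Since $U'/\SL_2$ is an open subvariety of the stable quotient on which $\SL_2$ --- hence $\SU(2)$ --- acts freely, it is thereby identified with the $\SU(2)$-quotient of an open subset of $\mu_{HK}^{-1}(0)$ on which $\SU(2)$ acts freely, and that quotient is a hyperk\"ahler manifold by \cite{hitchinetal}; the identification is a diffeomorphism compatible with the complex structures, so it transports the hyperk\"ahler structure (this is Proudfoot's argument for Propositions~\ref{Lagrangian prop} and \ref{proudfoot1}, localised to the stable/free locus). Pulling back along the isomorphism of Theorem~\ref{thm biratl sympl} then equips $\mu_{\GG_a}^{-1}(0)^s/\GG_a$ with a hyperk\"ahler structure, compatible with the algebraic symplectic form $\omega'$ of Proposition~\ref{nonred alg symp analog} because $\omega'$ corresponds to $\omega_\CC$.

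The real work, I expect, is (i): setting up the identification of $T^*W$ with a sum of standard $\SL_2$-modules, evaluating the vectors attached to $i(p)$, and supplying the (standard, but not instantaneous) input that the $\SL_2$-orbit of a spanning tuple in $(\CC^2)^{\oplus m}$ is closed, so that ``$\SL_2$-stable'' is genuinely ``spans $\CC^2$''. Part (ii) is then essentially formal; its one delicate point is that $0$ is not a global regular value of $\mu_{\SL_2}$ (nor of $\mu_{HK}$), so Proposition~\ref{proudfoot1} cannot be quoted verbatim and one must descend to the stable/free locus before invoking \cite{hitchinetal} --- but (i) is exactly what places $i(\mu_{\GG_a}^{-1}(0)^s)$ inside that locus.
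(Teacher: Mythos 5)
Your proof is correct; part (ii) is essentially the paper's argument (set $U=\mu_{\GG_a}^{-1}(0)^s$, $U'=\SL_2\cdot i(U)$, transport the isomorphism $U/\GG_a\cong U'/\SL_2$ from Theorem \ref{thm biratl sympl}, and pull back the hyperk\"ahler structure obtained from the $\SU(2)$-reduction via Kempf--Ness on the free locus), but your part (i) takes a genuinely different route. The paper argues as follows: for $p\in\mu_{\GG_a}^{-1}(0)^s$, Proposition \ref{prop sym1}(ii) supplies $f\in\CC[T^*V]^{\GG_a}$ with $f(p)\neq 0$; pushing $f$ through $\CC[T^*V]^{\GG_a}\cong\CC[T^*V\times\AA^2]^{\SL_2}\ra\CC[T^*W]^{\SL_2}$ (using an $\SL_2$-equivariant projection $\pi$ with $\pi\circ i=(\mathrm{id},(1,0))$) gives an $\SL_2$-invariant $h$ with $h(i(p))=f(p)\neq 0$, and since $T^*W\cong\sym^1(\CC^2)^{\oplus 2(n+1)}$ the stable set is exactly the union of non-vanishing loci of invariants; triviality of stabilisers then comes from Lemma \ref{stab gp pres}. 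You instead make the nullcone of $T^*W$ explicit via the first fundamental theorem (stable $=$ the $2(n+1)$ vectors span $\CC^2$) and compute the vectors attached to $i(p)$ directly, checking that failure to span forces $x_i=\beta_i=0$ for all $i$ and hence $\Phi_H=0$, i.e.\ $p$ lies in the $\GG_a$-fixed locus. Both arguments hinge on the same special feature of this example (the semistable and stable loci of a sum of standard $\SL_2$-modules coincide and are the complement of the nullcone); the paper's version is shorter and reuses the separating-invariants statement of Proposition \ref{prop sym1}(ii), while yours is more self-contained at the level of explicit coordinates and yields the slightly stronger conclusion $\mu_{\GG_a}^{-1}(0)^s=\mu_{\GG_a}^{-1}(0)\cap i^{-1}(\mu_{\SL_2}^{-1}(0)^{\SL_2-s})$, not just the inclusion. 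Your closing remark that Proposition \ref{proudfoot1} cannot be quoted verbatim because $0$ is not a regular value of $\mu_{\SL_2}$, so one must first descend to the stable/free locus, is a point the paper leaves implicit and is worth making.
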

\begin{proof}
For (i), if $p \in \mu_{\GG_a}^{-1}(0)^s$, then there exists $f \in \CC[T^*V]^{\GG_a}$ such that 
$f(p) \neq 0$ by Proposition \ref{prop sym1}. Let $h$ be the image of $f$ under the homomorphisms
\[ \CC[T^*V]^{\GG_a} \cong \CC[T^*V \times \AA^2]^{\SL_2} \ra \CC[T^*W]^{\SL_2}, \]
where the final homomorphism is induced by an $\SL_2$-equivariant projection $\pi : T^*W \ra T^*V \times \AA^2$ such that $\pi \circ i : T^*V \ra T^*V \times \AA^2$ is given by the identity map on $T^*V$ and the point $(1,0) \in \AA^2$. Then $h(i(p))=f(p) \neq 0$. Since $T^*W = \sym^1(\CC^2)^{\oplus 2(n+1)}$, the stable set $(T^*W)^{\SL_2-s}$ is the union of all open subsets given by the non-vanishing locus of an $\SL_2$-invariant. Hence $i(p)$ is $\SL_2$-stable and has trivial stabiliser group by Lemma \ref{stab gp pres}.

For (ii), let $U=\mu_{\GG_a}^{-1}(0)^s$ and $U' = \SL_2 \cdot i(U);$ then by the argument used in Theorem \ref{thm biratl sympl}, we have
\[ U/\GG_a \cong U'/\SL_2\]
and, moreover, as $\SL_2$ acts freely on $U' \subset \mu_{\SL_2}^{-1}(0)^{\SL_2-s}$, its $\SL_2$-quotient has a hyperk\"{a}hler 
structure coming from the hyperk\"{a}hler reduction of $T^*W$ by $\SU(2)$ via the Kempf--Ness homeomorphism. 
\end{proof}

\subsection{The blow up of $n$ points on $\PP^{n-2}$}

Let us return to $\text{Bl}_n(\PP^{n-2})=V/\!/_\chi \GG_a \rtimes T$, where $V = \AA^{2n}$ and $T=\GG_m^{n+1}$. In the previous section, we described the moment map and algebraic symplectic reduction for the $\GG_a$-action on $T^*V$. In this section, we also take into account the torus action. The following lemma gives the equations for the zero level set of the algebraic moment map for the torus action.

\begin{lemma}
For $T = \GG_m^{n+1}$ acting on $T^*V$ as above, the algebraic moment map $\mu_{T} : T^*V \ra \CC^{n+1}$ 
is given by
\[ \mu_T((y_i,x_i,\beta_i,\alpha_i)_{1 \leq i \leq n})=(\sum_{i=1}^n \alpha_iy_i,\beta_1x_1 - \sum_{j \neq 1} \alpha_jy_j,....,\beta_nx_n -\sum_{j \neq n} \alpha_j y_j).\]
Hence $\mu_{T}^{-1}(0)$ is defined by the ideal
\[ I=\left(\sum_{i=1}^n \alpha_iy_i, \alpha_jy_j + \beta_j x_j :  1 \leq j \leq n \right).\]
\end{lemma}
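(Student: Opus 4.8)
The plan is to apply Lemma \ref{alg mmap def} (ii) to the torus $T = \GG_m^{n+1}$ acting linearly on $V = \AA^{2n}$ with the specified weights, compute the resulting moment map explicitly in the given coordinates, and then read off the defining ideal of the zero level set. First I would recall from the excerpt that $T$ acts on $V$ by $t \cdot (y_i, x_i) = (t_0 \prod_{j \neq i} t_j^{-1} y_i, t_i x_i)$, so that the cotangent lift to $T^*V \cong V \times V^*$ acts on the dual coordinates $(\beta_i, \alpha_i)$ (dual to $x_i, y_i$ respectively — one must fix this pairing convention carefully) by the inverse-transpose weights. Concretely, $y_i$ has $T$-weight $e_0 - \sum_{j \neq i} e_j$ and $x_i$ has weight $e_i$, so $\alpha_i$ has weight $-(e_0 - \sum_{j \neq i} e_j)$ and $\beta_i$ has weight $-e_i$.

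Next I would invoke the formula $\mu_T(v,\gamma) \cdot A = \gamma(\rho_*(A)v)$ from Lemma \ref{alg mmap def} (ii). Since $T$ is abelian and diagonalisable, the derivative $\rho_*$ acts diagonally, so for the $k$-th coordinate $A = e_k$ of $\ft^* = \CC^{n+1}$ the pairing $\gamma(\rho_*(e_k) v)$ picks out exactly the sum of $(\text{coordinate of }\gamma) \cdot (\text{coordinate of }v)$ over those basis vectors whose $T$-weight has nonzero $k$-th component, weighted by that component. Carrying this out: the $e_0$-component of $\mu_T$ collects the contributions of all $y_i$ (each with $e_0$-weight $+1$) paired with $\alpha_i$, giving $\sum_i \alpha_i y_i$; and the $e_k$-component for $1 \le k \le n$ collects $\beta_k x_k$ (from $x_k$, weight $+e_k$) together with $-\alpha_j y_j$ for each $j \neq k$ (since $y_j$ has $e_k$-weight $-1$ when $j \neq k$), giving $\beta_k x_k - \sum_{j \neq k} \alpha_j y_j$. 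This matches the claimed formula, so I would then conclude that $\mu_T^{-1}(0)$ is cut out by setting each of these $n+1$ functions to zero.

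Finally, for the statement about the ideal, I would observe that the $n+1$ equations $\sum_i \alpha_i y_i = 0$ and $\beta_k x_k - \sum_{j\neq k} \alpha_j y_j = 0$ for $1 \le k \le n$ generate the same ideal as the $n+1$ equations $\sum_i \alpha_i y_i = 0$ and $\alpha_k y_k + \beta_k x_k = 0$ for $1 \le k \le n$: indeed, adding the first (global) equation to the $k$-th of the original equations yields $\beta_k x_k + \alpha_k y_k = 0$, and conversely the original $k$-th equation is recovered by subtracting $\alpha_k y_k + \beta_k x_k$ from the global relation. Hence the two generating sets are related by an invertible linear change and define the same ideal $I$.

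The computation is entirely routine; the only genuine point of care is bookkeeping of the sign/weight conventions for the cotangent lift (which coordinate of $V^*$ is dual to which coordinate of $V$, and that the cotangent action twists weights by $-1$), since a slip there would flip signs in $\mu_T$. I expect no real obstacle beyond getting these conventions consistent with the normalisation of the Liouville form and of the $\GG_a$-moment map used earlier in the section.
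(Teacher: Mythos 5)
Your proposal is correct in method and is surely the intended argument: the paper states this lemma without any proof, and the only thing to do is exactly what you do — feed the diagonal torus action into the moment-map formula of Lemma \ref{alg mmap def}(ii), read off the coefficient of each basis vector $e_k$ of $\ft$, and then observe that adding the global relation $\sum_i \alpha_i y_i$ to the $k$-th component converts the generating set into the one displayed for $I$. That last ideal manipulation is exactly right.

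The one point you flagged as needing care — the duality pairing — is in fact where the subtlety lies, and it is worth being precise about it. Your choice ($\alpha_i$ dual to $y_i$, $\beta_i$ dual to $x_i$) reproduces the lemma's formula verbatim. However, it is the \emph{opposite} of the convention forced by the paper's own formulas at the start of \S\ref{sec sym1s}: there the cotangent-lifted $\GG_a$-action is $c \cdot (y_i,x_i,\beta_i,\alpha_i) =(y_i+cx_i,x_i,\beta_i,\alpha_i - c \beta_i)$ with moment map $\mu_{\GG_a} = \sum_j \beta_j x_j$, and both of these require $\beta_i$ to be dual to $y_i$ and $\alpha_i$ to be dual to $x_i$ (this also matches the renaming of the coordinates $x^{(j)}_i, \alpha^{(j)}_i$ from Example \ref{ex non fg}). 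With that convention the torus moment map comes out with the roles of $\alpha_i$ and $\beta_i$ interchanged, i.e.\ $\bigl(\sum_i \beta_i y_i,\ \alpha_1 x_1 - \sum_{j\neq 1}\beta_j y_j,\dots\bigr)$ and $I = \bigl(\sum_i \beta_i y_i,\ \beta_j y_j + \alpha_j x_j\bigr)$. So there is an $\alpha\leftrightarrow\beta$ inconsistency internal to the paper (most plausibly a typo in the lemma), not a flaw in your reasoning; none of the subsequent geometric arguments is affected, since the statement is symmetric under simultaneously relabelling the dual coordinates. One small correction to your closing remark: getting the pairing wrong does not flip signs in $\mu_T$, it swaps the letters $\alpha$ and $\beta$ throughout.
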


The construction in \cite{brentnoah} as applied to this example tells us three facts we will now use.  The first is that the rational quotient map from $V$ to $\text{Bl}_n(\PP^{n-2})$ is surjective.  The second is that the action of $\GG_a \rtimes T$ is scheme-theoretically free on an open subset $U$ of $V$ that surjects onto $\text{Bl}_n(\PP^{n-2})$ under the quotient map.  The third is that this open subset $U$ is exactly the stable locus for the torus action.  

These facts allow us to describe a hyperk\"{a}hler structure on $T^*\text{Bl}_n(\PP^{n-2})$ by taking a quotient in two stages: first by the $\GG_a$-action and then by the torus action.  Observe that $\mu_{\GG_a}^{-1}(0)$ is closed under the torus action and there is an induced torus action on the quotient $\mu_{\GG_a}^{-1}(0)/\!/\GG_a$.  We know by Proposition \ref{prop sym1} and the explicit form of the fixed point locus, that the open subset $\mu_{\GG_a}^{-1}(0)^s$, which is the complement of the fixed point locus, must also be closed under the torus action.  Furthermore, the complement of $\mu_{\GG_a}^{-1}(0)^s$ in $\mu_{\GG_a}^{-1}(0)$, that is, the non-stable points for the $\GG_a$-action, are contained in the unstable points for the torus action.

\begin{lemma}
The complement $\mathcal{C}$ of the $\GG_a$-stable points in $T^*V$ is a subset of the unstable points for the torus action.  Furthermore, the image of $\mathcal{C}$ under the $\GG_a$-quotient map is contained in the unstable points for the induced torus action on $\mu_{\GG_a}^{-1}(0)/\!/\GG_a$.
\end{lemma}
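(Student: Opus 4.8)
The plan is to pin down $\mathcal{C}$ explicitly, produce a one-parameter subgroup of $T=\GG_m^{n+1}$ that fixes $\mathcal{C}$ pointwise, and then read off instability from the Hilbert--Mumford criterion; the one genuinely new input needed is that this one-parameter subgroup pairs nontrivially with the linearising character $\chi$, and I expect that to be the only real obstacle.

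First I would recall that for $V=\sym^1(\CC^2)^{\oplus n}$ Lemma \ref{lemma comp stable} (equivalently Proposition \ref{prop sym1}(i)) gives
\[
\mathcal{C} \;=\; T^*V-(T^*V)^{\overline{s}} \;=\; \{(y_i,x_i,\beta_i,\alpha_i)_{1\le i\le n} : x_i=\beta_i=0 \text{ for all }i\},
\]
which is moreover the $\GG_a$-fixed locus in $T^*V$ and is contained in $\mu_{\GG_a}^{-1}(0)$. Consider the one-parameter subgroup $\lambda:\GG_m\to T$, $\lambda(s)=(s^{n-1},s,\dots,s)$. Substituting $t=\lambda(s)$ into the torus action $t\cdot(y_i,x_i)=(t_0\prod_{j\ne i}t_j^{-1}\,y_i,\;t_ix_i)$ one gets $\lambda(s)\cdot y_i=y_i$ and $\lambda(s)\cdot x_i=s\,x_i$, and dually on the cotangent fibres $\lambda(s)\cdot\beta_i=s^{-1}\beta_i$, $\lambda(s)\cdot\alpha_i=\alpha_i$. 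In particular $\lambda(\GG_m)$ fixes every point of $\mathcal{C}$.

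For the first assertion, since the subtorus $\lambda(\GG_m)$ fixes each $p\in\mathcal{C}$, the Hilbert--Mumford criterion shows that $p$ is $\chi$-unstable for $T$ as soon as $\langle\chi,\lambda\rangle\ne 0$ (the limits of $p$ along $\lambda$ and along $\lambda^{-1}$ both equal $p$, so one of the two destabilises according to the sign of $\langle\chi,\lambda\rangle$). So it remains to prove $\langle\chi,\lambda\rangle\ne 0$. Suppose it were zero. Then for $k\ge 1$ any $\chi^{\otimes k}$-semi-invariant $f\in\CC[V]$ for the $\GG_a\rtimes T$-action is $\GG_a$-invariant (as $\GG_a$ has no nontrivial characters) and $\lambda(\GG_m)$-invariant (as $\chi(\lambda(s))=s^{\langle\chi,\lambda\rangle}=1$); since $\lambda(\GG_m)$ acts on $V$ trivially on the $y_i$ and with weight $1$ on the $x_i$, this forces $f\in\CC[y_1,\dots,y_n]$, and then $\GG_a$-invariance means $f(y_1+cx_1,\dots,y_n+cx_n)=f(y)$, which differentiated at $c=0$ gives $\sum_i x_i\,\partial f/\partial y_i=0$, hence $f$ constant, hence $f=0$ since $\chi\ne 0$. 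Thus $V/\!/_\chi(\GG_a\rtimes T)=\text{Bl}_n\PP^{n-2}$ would be $\proj$ of a ring concentrated in degree $0$, i.e.\ empty, a contradiction. (Structurally, $\lambda$ is the class of the strict transform of a rational normal curve of degree $n-1$ through the $n$ blown-up points, so it pairs strictly positively with every ample $\chi$; but the emptiness argument sidesteps having to invoke this.)

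For the second assertion I would use that, by Proposition \ref{prop sym1}(ii), the common zero locus of the $\GG_a$-invariants on $T^*V$ is exactly the fixed locus $\mathcal{C}$. Hence the $\GG_a$-quotient map $q:\mu_{\GG_a}^{-1}(0)\to \mu_{\GG_a}^{-1}(0)/\!/\GG_a=\mathcal{X}_0$ collapses all of $\mathcal{C}$ to the single point $0$, the vertex of the affine cone $\mathcal{X}_0$ ($\mathcal{X}_0=\spec$ of the graded ring $\CC[T^*V]^{\GG_a}/(\mu_{\GG_a})$, a quotient by a homogeneous element, and $T$ preserves this grading as it normalises $\GG_a$). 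The vertex is a $T$-fixed point, hence $\chi$-unstable for every nonzero $\chi$, so $q(\mathcal{C})=\{0\}$ lies in the unstable locus; alternatively, any $\chi^{\otimes k}$-semi-invariant on $\mathcal{X}_0$ with $k\ge1$ lifts to a $\chi^{\otimes k}$-semi-invariant $\GG_a$-invariant function on $T^*V$, which vanishes on $\mathcal{C}$ by the first part. The only substantive step in the whole argument is $\langle\chi,\lambda\rangle\ne 0$; everything else is bookkeeping with the explicit weights of the cotangent lift, and the same pattern (a cocharacter assembled from the exceptional directions, recognised as an effective curve class) should handle the more general blow-ups of \cite{brentnoah}.
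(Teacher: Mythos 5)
Your treatment of the second assertion is sound and is essentially the paper's own argument: by Proposition \ref{prop sym1}(ii) the complement $\mathcal{C}$ is the nullcone of the $\GG_a$-action, so every non-constant homogeneous $\GG_a$-invariant vanishes on it, $q(\mathcal{C})$ is the $T$-fixed vertex of the graded ring $\spec(\CC[T^*V]^{\GG_a}/(\mu_{\GG_a}))$, and a $T$-fixed point is unstable for any nonzero linearising character. The paper in fact runs this one soft observation for \emph{both} assertions: any $\chi^{k}$-semi-invariant with $k\ge 1$ that is relevant to the two-stage quotient is a non-constant homogeneous $\GG_a$-invariant and therefore already vanishes on $\mathcal{C}$; no Hilbert--Mumford computation is needed anywhere.

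Your proof of the first assertion, by contrast, has a genuine gap coming from a mislabelling of the dual coordinates. In the paper's conventions $\beta_i$ is dual to $y_i$ and $\alpha_i$ is dual to $x_i$: this is forced by $\mu_{\GG_a}=\sum_j\beta_jx_j$ and by the fact that $\mathcal{C}=\{x_i=\beta_i=0\}$ is the $\GG_a$-fixed locus. Hence under $\lambda(s)=(s^{n-1},s,\dots,s)$ the cotangent fibre transforms by $\beta_i\mapsto\beta_i$ and $\alpha_i\mapsto s^{-1}\alpha_i$ --- the opposite of what you wrote. A general point $(y,0,0,\alpha)\in\mathcal{C}$ is therefore \emph{not} fixed by $\lambda$, and the step ``both limits along $\lambda^{\pm1}$ exist, so one of them destabilises as soon as $\langle\chi,\lambda\rangle\neq0$'' collapses: on $\mathcal{C}$ only $\lim_{s\to\infty}\lambda(s)\cdot p$ exists, so the Hilbert--Mumford test must be run with $\lambda^{-1}$, and that requires the specific sign $\langle\chi,\lambda\rangle>0$ rather than mere non-vanishing. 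Your emptiness argument delivers only $\langle\chi,\lambda\rangle\neq0$, and you explicitly defer the positivity (the effective-curve-class remark), so the first assertion is not established as written. Either prove $\langle\chi,\lambda\rangle>0$ for ample $\chi$, or --- much more simply --- reuse your own nullcone observation from the second half: every positive-weight semi-invariant entering the stability condition is a non-constant homogeneous $\GG_a$-invariant and hence vanishes identically on $\mathcal{C}$, which is exactly how the paper disposes of both statements at once.
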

\begin{proof}
By Proposition \ref{prop sym1}, the complement $\mathcal{C}$ is the nullcone of the $\GG_a$-action.  Therefore any homogeneous function on the quotient of $T^*V$ pulls-back to a $\GG_a$-invariant function that vanishes on $\mathcal{C}$.  In particular, if $f$ is a homogeneous semi-invariant function for the torus action on the quotient, then the pull-back must vanish on $\mathcal{C}$, and $f$ itself must vanish on the image of $\mathcal{C}$.  
\end{proof}

By Proposition \ref{prop:2x2hyperkahler} there is a hyperk\"{a}hler structure on $\mu_{\GG_a}^{-1}(0)^s/\GG_a$.  By the above lemma, the GIT torus quotient of $\mu_{\GG_a}^{-1}(0) /\!/ \GG_a$ is equal to the GIT torus quotient of $\mu_{\GG_a}^{-1}(0)^s / \GG_a$.  By direct observation, the solvable group acts freely on $\mu_{\GG_a}^{-1}(0)^s$ and hence the torus acts freely on $\mu_{\GG_a}^{-1}(0)^s / \GG_a$.  Therefore one can now apply the standard torus hyperk\"{a}hler quotient construction to $\mu_{\GG_a}^{-1}(0) /\!/\ \GG_a$.  By our above facts, this gives us a hyperk\"{a}hler structure on $T^*\text{Bl}_n(\PP^{n-2})$ by restriction to an open set: first, Proposition \ref{nonred alg symp analog} tells us $T^*(V^s/\GG_a)$ is an open subset of $\mu_{\GG_a}^{-1}(0)^s / \GG_a$; then, the further torus quotient analogue must contain $T^*(V^s/(\GG_a \rtimes T)$, which equals $T^*\text{Bl}_n(\PP^{n-2})$, as an open subset.  

\begin{prop}
The cotangent bundle $T^* \text{Bl}_n(\PP^{n-2})$ has a hyperk\"{a}hler structure realised as an open subset of an algebraic symplectic analogue of a quotient of affine space by a solvable group action. 
\end{prop}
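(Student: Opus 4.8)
The plan is to carry out the algebraic symplectic reduction of $T^*V$ by the solvable group $\GG_a \rtimes T$, with $V = \AA^{2n}$ and $T = \GG_m^{n+1}$, in two stages — first by $\GG_a$, then by the residual torus $T$ — and to transport a hyperk\"ahler structure through both stages. The crucial observation is that, as an $\SL_2$-representation, $V = \sym^1(\CC^2)^{\oplus n}$, so the first stage falls precisely under $\S$\ref{sec sym1s}: by Proposition \ref{prop:2x2hyperkahler}, the quotient $\mu_{\GG_a}^{-1}(0)^s/\GG_a$ is a smooth algebraic symplectic variety carrying a hyperk\"ahler structure, inherited from the $\SL_2$-reduction of $T^*W$ with $W = \sym^1(\CC^2)^{\oplus n+1}$ via the Kempf--Ness homeomorphism.

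First I would assemble the three inputs from \cite{brentnoah}: the rational quotient $V \dashrightarrow \text{Bl}_n(\PP^{n-2})$ is surjective; $\GG_a \rtimes T$ acts scheme-theoretically freely on an open $U \subset V$ surjecting onto $\text{Bl}_n(\PP^{n-2})$; and $U$ is the $T$-stable locus. Next I would set up the second stage. The torus $T$ preserves $\mu_{\GG_a}^{-1}(0)$, and by Proposition \ref{prop sym1} together with the explicit form of the $\GG_a$-fixed locus it also preserves $\mu_{\GG_a}^{-1}(0)^s$; hence $T$ acts on $\mu_{\GG_a}^{-1}(0)^s/\GG_a$ preserving the hyperk\"ahler structure, and the preceding lemma shows the GIT torus quotient of the central fibre $\mu_{\GG_a}^{-1}(0)/\!/\GG_a$ agrees with that of $\mu_{\GG_a}^{-1}(0)^s/\GG_a$. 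Since $\GG_a \rtimes T$ acts freely on $\mu_{\GG_a}^{-1}(0)^s$, the induced $T$-action on $\mu_{\GG_a}^{-1}(0)^s/\GG_a$ is free, so the standard torus hyperk\"ahler quotient construction applies at a central value $(\chi,0)$ of the torus hyperk\"ahler moment map and produces a smooth hyperk\"ahler manifold. Finally I would identify $T^*\text{Bl}_n(\PP^{n-2})$ inside it as a dense open subset: Proposition \ref{nonred alg symp analog} gives $T^*(V^s/\GG_a)$ as an open subset of $\mu_{\GG_a}^{-1}(0)^s/\GG_a$, and the Proudfoot-style argument of Proposition \ref{Lagrangian prop}, now applied to the torus quotient, exhibits $T^*\big(V^s/(\GG_a \rtimes T)\big) = T^*\text{Bl}_n(\PP^{n-2})$ as a dense open subset of the torus hyperk\"ahler quotient, using that $V^s$ surjects onto $\text{Bl}_n(\PP^{n-2})$.

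The main obstacle is the bookkeeping needed to make the iterated reduction legitimate: one must check that reducing $T^*V$ by $\GG_a \rtimes T$ in one step coincides with reducing first by $\GG_a$ and then by the residual $T$ — i.e. that the moment map level sets and their quotients are compatible in the two orders — and, more subtly, that the hyperk\"ahler structure produced in Proposition \ref{prop:2x2hyperkahler} on $\mu_{\GG_a}^{-1}(0)^s/\GG_a$ is genuinely $T$-invariant with $T$ acting hyper-Hamiltonianly (this requires $T$ to extend compatibly to $T^*W$ and to commute with $\SL_2$ there, so that the residual compact torus acts hyper-Hamiltonianly on the $\SL_2$-hyperk\"ahler quotient). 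Once this compatibility is established, the openness and density of $T^*\text{Bl}_n(\PP^{n-2})$ follow formally from the earlier results; the freeness statements from \cite{brentnoah} are exactly what is needed to avoid the singularities that would otherwise obstruct a clean hyperk\"ahler reduction.
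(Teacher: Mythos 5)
Your proposal is correct and follows essentially the same route as the paper: a two-stage reduction (first by $\GG_a$ using Proposition \ref{prop:2x2hyperkahler}, then by the residual torus via the standard torus hyperk\"ahler quotient), combined with the three facts from \cite{brentnoah}, the lemma identifying the $\GG_a$-unstable locus inside the torus-unstable locus, freeness of the solvable action on $\mu_{\GG_a}^{-1}(0)^s$, and Proposition \ref{nonred alg symp analog} to exhibit $T^*\text{Bl}_n(\PP^{n-2})$ as an open subset. The compatibility issues you flag (iterated versus one-step reduction, $T$-invariance of the hyperk\"ahler structure) are real points of care that the paper treats only implicitly, but they do not change the argument.
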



\section{Examples}\label{sec exs}

We can explicitly study some examples, using an algorithm of van den Essen \cite{vandenEssen} and Derksen \cite{derksen} to compute rings of $\GG_a$-invariants on the zero level set of the moment map. Let us quickly explain the algorithm.

For a $\GG_a$-action on an affine variety, the algorithm produces a list of generators for $\CC[X]^{\GG_a}$, which terminates if and only if the ring is finitely generated. The first step is to choose $f \in \CC[X]^{\GG_a}$ 
such that on $X_f$ the $\GG_a$-action becomes a trivial $\GG_a$-torsor (the existence of such an $f$ is 
a consequence of the derivation of the $\GG_a$-action being locally nilpotent). Then, as taking $\GG_a$-invariants is left exact, there is an inclusion 
\[ \CC[X]^{\GG_a} \hookrightarrow \CC[X]_f^{\GG_a}\] 
and $\CC[X]^{\GG_a}=\CC[X] \cap \CC[X]_f^{\GG_a}$. 
The algorithm gives generators for this intersection using colon ideals and can be performed using a computer algebra package, which is adept at computing Groebner bases; for details see \cite{derksen} Algorithms 2.6 and 2.7.

\subsection{The 2 dimensional irreducible $\SL_2$-representation}\label{sec ex 2dim}

Let $V = \sym^1(\CC^2) \cong \AA^2$ with 
$\GG_a$-action given by $c \cdot (x_1,x_2) = (x_1+cx_2,x_2)$, for $c \in \GG_a$. 
We have GIT quotient $\pi_V : V \ra V/\!/\GG_a =\spec \CC[x_2] \cong \AA^1$, which 
restricts to a geometric quotient on $V^s = \{(x_1,x_2): x_2 \neq 0 \}$. 

The moment map for the cotangent lift of the $\SL_2$-action to $T^*V$ is given by
\[ \mu_{\SL_2,V}(x,\alpha) = (x_1\alpha_1 -x_2 \alpha_2)H^* + (x_2\alpha_1)E^* + (x_1\alpha_2)F^*\]
and the $\GG_a$-moment map is given by $\mu_{\GG_a}(x,\alpha) = x_2\alpha_1$. 
By Example \ref{ex non fg}, $\CC[\mu_{\GG_a}^{-1}(0)]^{\GG_a}$ is non-finitely generated and so we instead work with the normalisation 
\[ \widetilde{\mu_{\GG_a}^{-1}(0)} = \mu_{\GG_a}^{-1}(0)_{(1)} \bigsqcup \mu_{\GG_a}^{-1}(0)_{(2)}\]
where $\mu_{\GG_a}^{-1}(0)_{(1)} = \spec \CC[x_1,\alpha_1,\alpha_2]$ has GIT quotient 
$\mu_{\GG_a}^{-1}(0)_{(1)}/\!/\GG_a = \spec \CC[x_1,\alpha_1]$ and 
$\mu_{\GG_a}^{-1}(0)_{(2)} = \spec \CC[x_1,x_2,\alpha_2]$ has GIT quotient 
$\mu_{\GG_a}^{-1}(0)_{(2)}/\!/\GG_a = \spec \CC[x_2,,\alpha_2]$, and
\[ \mu_{\GG_a}^{-1}(0)_{(1)}^{\overline{s}} = \{ (x_1,\alpha_1,\alpha_2) : \alpha_1 \neq 0 \} \quad \text{and} \quad 
 \mu_{\GG_a}^{-1}(0)_{(1)}^{\overline{s}} = \{ (x_1,x_2,\alpha_2) : x_2 \neq 0 \}.\]

Let $W = V \times \AA^2$ and write $\CC[T^*W] =\CC[T^*V][u,v,\lambda,\eta]$. Then for the $\SL_2$-action 
on $T^*W$, the zero level set $\mu_{\SL_2}^{-1}(0) \subset T^*W$ is defined by the ideal
\[ (  x_1\alpha_1-x_2\alpha_2 +\lambda u-\eta v ,x_2\alpha_1 +\lambda v,x_1\alpha_2 +\eta u)\]
and $\CC[T^*W]^{\SL_2}=\CC[h_1,\cdots,h_6]/(h_1h_4 - h_2h_5 +h_3h_6)$ where
\[\begin{array}{lll} 
h_1 = x_2 u -x_1v \quad & 
h_2 = \alpha_1 u + \alpha_2 v \quad & 
h_3 = x_1\alpha_1 + x_2\alpha_2 \\
h_4 = \alpha_1 \eta - \alpha_2 \lambda \quad &
h_5 = x_1 \lambda +x_2 \eta \quad & 
h_6 = u \lambda + v \eta. 
\end{array}\]
We have $\mu_{\SL_2}^{-1}(0)^{\SL_2-s}= \cup_{i=1}^6 \mu_{\SL_2}^{-1}(0)_{h_i}$.

The $\GG_a$-equivariant closed embedding $i : \mu_{\GG_a}^{-1}(0) \ra \mu_{\SL_2}^{-1}(0)$ induces  \[\overline{i_{(j)}}: \mu_{\GG_a}^{-1}(0)_{(j)}/\!/\GG_a \ra \mu_{\SL_2}^{-1}(0)/\!/\SL_2\] 
where
\begin{align*}
 \overline{i_{(1)}}(x_1,\alpha_1) = &(0, \alpha_1,\alpha_1x_1,0, -\alpha_1x_1^2,-\alpha_1x_1),\\
 \overline{i_{(2)}}(x_2,\alpha_2) = &(x_2,0,x_2\alpha_2,-x_2\alpha_2^2,0,x_2\alpha_2). 
\end{align*}
We have $i_{(j)}(\mu_{\GG_a}^{-1}(0)_{(j)}) \subset \mu_{\SL_2}^{-1}(0)^{\SL_2-s}_I$ and the 
morphisms $\overline{i_{(j)}}$ induce birational symplectomorphisms between the reductive and non-reductive algebraic symplectic analogue. Furthermore, for $V/\!/\GG_a \cong \AA^1 \cong W/\!/\GG_a$, we see that the non-reductive algebraic symplectic 
analogue $\mu_{\GG_a}^{-1}(0)_{(2)}/\!/\GG_a \cong \AA^2$ provides a better algebraic symplectic analogue 
that the reductive algebraic symplectic analogue $ \mu_{\SL_2}^{-1}(0)/\!/\SL_2$, which is a singular subvariety of $\AA^6$.

\subsection{The 3 dimensional irreducible $\SL_2$-representation}\label{sec sym2}

Let $V = \sym^2(\CC^2) \cong \AA^3$ with
$\GG_a$-action given by $c \cdot (x_1,x_2,x_3) = (x_1+2cx_2+c^2x_3,x_2+cx_3,x_3)$, for $c \in \GG_a$, and $\SL_2$-action
\[ \left(\begin{array}{cc} p & q \\ r & s \end{array} \right) \mapsto \left( \begin{array}{ccc} p^2 & 2pq & q^2 \\ pr & ps+qr & qs \\ r^2 & 2rs & s^2 \end{array}\right).\]

The non-reductive quotient $V \ra V/\!/\GG_a= \spec \CC[x_3, x_1x_3 -x_2^2] \cong \AA^2$ 
restricts to a geometric quotient on $V^{s}=\{(x_1,x_2,x_3) : x_3 \neq 0 \}$.

The algebraic moment map for $\SL_2$ acting on $T^*V$ is given by the holomorphic 
functions 
\[ \Phi_H(x,\alpha) =2x_1\alpha_1-2x_3\alpha_3 \quad 
\Phi_E(x,\alpha)=\mu_{\GG_a}(x,\alpha) =2x_2\alpha_1 +\alpha_2x_3
\quad \Phi_F(x,\alpha) = x_1\alpha_2 +2 \alpha_3 x_2.\]
For the $\GG_a$-action on $\mu_{\GG_a}^{-1}(0)$, we have
\[ \mu_{\GG_a}^{-1}(0) -\mu_{\GG_a}^{-1}(0)^{\overline{s}} = \{ (x,a) \in \mu_{\GG_a}^{-1}(0) : x_3 = a_1 = 0 \} \]
which strictly contains 
\[ \mu_{\GG_a}^{-1}(0)^{\GG_a} = \sing \mu_{\GG_a}^{-1}(0) = \{ (x,a) \in \mu_{\GG_a}^{-1}(0): x_2 = x_3 = a_1 = a_2 =0\}.\]
By the algorithm of van den Essen and Derksen, $\CC[\mu_{\GG_a}^{-1}(0)]^{\GG_a}$ is finitely generated by
\[ \begin{array}{lll}
f_1= x_3, & \quad f_4 = x_1a_1 +x_2a_2+x_3a_3, & \quad 
f_7 =4x_3a_3^2 + 2x_2a_2a_3 -4x_1 a_1 a_3 +x_1a_2^2. \\
f_2 = x_1x_3 -x_2^2, & \quad f_5 = 4a_1a_3 -a_2^2, & \quad 
f_8 = 2x_3x_1a_3 - 2x_2^2a_3 -2x_1^2a_1 -x_1x_2a_2. \\
f_3 =  a_1, & \quad f_6 = x_1a_1 - x_3a_3. & 
\end{array} \]
The invariants $f_6,f_7$ and $f_8$ are intrinsic to the hypersurface, but one easily checks that these do not give any additional separation or increase the stable locus; i.e., the locally trivial stable set coincides with the completely stable set in this example.

For the $\SL_2$-action on $T^*W$, the zero level set $\mu_{\SL_2}^{-1}(0) \subset T^*W$ is defined by 
the ideal
\[( 2x_1\alpha_1-2x_3\alpha_3 +\lambda u-\eta v , 2x_2\alpha_1+ \alpha_2 x_3 +\lambda v , x_1\alpha_2 + 2 \alpha_3x_2 +\eta u)\]
and $\CC[\mu_{\SL_2}^{-1}(0)]^{\SL_2}$ is generated by
\[\begin{array}{ll} 
h_1 = x_1 v^2 - 2x_2uv +x_3 u^2 \quad & 
h_2 = x_1x_3 - x_2^2  \\
h_3 = \alpha_1 u^2 + \alpha_2 uv + \alpha_3 v^2 \quad &
h_4 = x_1\alpha_1 + x_2 \alpha_2 + x_3 \alpha_3 \\
h_5 = 4 \alpha_1 \alpha_3 - \alpha_2^2 \quad & 
h_6 = u \lambda + v \eta \\
h_7 = \alpha_1 \eta^2 -\alpha_2 \eta \lambda + \alpha_3 \lambda^2 \quad &
h_8 = x_1\lambda^2  + 2x_2 \eta \lambda + x_3 \eta^2 \\
h_9 = x_1 v \lambda -x_2(u\lambda -v \eta) -x_3 u \eta \quad &
h_{10} = 2\alpha_1 u \eta -\alpha_2 (u\lambda -v \eta) - 2\alpha_3 \lambda v.
\end{array} \]

The birational symplectomorphism $\overline{i} : \mu_{\GG_a}^{-1}(0)/\!/\GG_a \ra \mu_{\SL_2}^{-1}(0)/\!/\SL_2$ given by Theorem \ref{thm biratl sympl} can be explicitly determined:
\[  \overline{i} (y_1, \dots, y_8)=(y_1,y_2,y_3,y_4,y_5, -2y_6, -y_6y_7, -2y_6y_8,0,0).\]

\bibliographystyle{amsplain}
\bibliography{references}

\medskip \medskip

\noindent{Brent Doran (\texttt{brent.doran@math.ethz.ch}) ETH Zurich, Switzerland.} 

\medskip

\noindent{Victoria Hoskins (\texttt{hoskins@math.fu-berlin.de}) Freie Universit\"{a}t Berlin, Germany.}

\end{document}